\numberwithin{equation}{subsection}
\theoremstyle{plain}
\newtheorem*{theorem*}{Theorem} 
\newtheorem*{lemma*}{Lemma}
\newtheorem*{assumption*}{Assumption}
\newtheorem{theorem}[equation]{Theorem} 
\newtheorem{lemma}[equation]{Lemma}
\theoremstyle{definition}
\newtheorem{definition}[equation]{Definition}
\newtheorem{remark}[equation]{Remark}
\newtheorem*{remark*}{Remark}
\newtheorem*{observation*}{Observation}
\theoremstyle{remark}
\newcommand{\complex}{\mathbb{C}}
\newcommand{\lie}[1]{\mathfrak{#1}}
\newcommand{\compop}{\mathfrak{K}}
\newcommand{\deformation}[2]{\mathbb{N}_{#2}{#1}}
\newcommand{\Compacts}{\mathfrak{K}}
\newcommand{\Bounded}{\mathfrak{B}}
\newcommand{\R}{\mathbb{R}}
\newcommand{\C}{\mathbb{C}}
\DeclareMathOperator{\Ind}{Ind}
\begin{document}

\title[The Mackey Bijection  for   Complex  Reductive  Groups]{The Mackey Bijection  for   Complex  Reductive  Groups   and Continuous Fields of Reduced Group C*-Algebras} 
\author{Nigel Higson}
\author{Angel Rom\'an}
\address{Department of Mathematics, Penn State University, University Park, PA 16802, USA}


\begin{abstract}
The purpose of this paper is to make a further contribution  to the Mackey bijection for a complex reductive group $G$, between the tempered  dual of $G$ and the unitary dual of the associated Cartan motion group.   We shall construct an embedding of the $C^*$-algebra of the motion group into the reduced $C^*$-algebra of $G$, and  use it  to characterize  the continuous field of reduced group $C^*$-algebras that is associated to the Mackey bijection.  We shall also obtain a new characterization of the Mackey bijection using the same embedding.
\end{abstract}

\maketitle

\section{Introduction}

The \emph{Mackey bijection} is a certain one-to-one correspondence between the tempered  irreducible unitary representations of a real reductive group and the unitary irreducible representations of its Cartan motion group.  Its existence was suggested by George Mackey \cite{Mackey75} in the 1970's (although Mackey stopped short of making a precise conjecture).   After a long pause, Mackey's proposal began to be examined in detail over the past  dozen years, first for complex groups in \cite{Higson08} and ultimately for all real groups in breakthrough work of Afgoustidis \cite{Afgoustidis15}.  

The purpose of this paper is to return to the case of complex groups, and examine the Mackey bijection in greater detail through the theory of group $C^*$-algebras and continuous fields of group $C^*$-algebras.   We shall characterize the continuous field that has played a key role in nearly every study of the Mackey bijection, and use our characterization of the field to give a new characterization of the Mackey bijection.

Group $C^*$-algebras have long played a prominent role in the Mackey bijection.  Indeed
Mackey's original idea was kept alive by Alain Connes, who noticed a link between Mackey's work  and the Connes-Kasparov isomorphism   in $C^*$-algebra $K$-theory \cite{ConnesHigson90,BCH94}.

The connection with $K$-theory is made through the deformation to the normal cone construction from geometry, which associates to the inclusion of a maximal compact subgroup $K$ into any Lie group $G$ a smooth, one-parameter family of Lie groups $G_t$, all of them copies of the group $G$ except at $t{=}0$, where 
\[
G_0 = K \ltimes \bigl ( \operatorname{Lie}(G) / \operatorname{Lie}(K) \bigr )  .
\]
In the case where $G$ is reductive, $G_0$ is the Cartan motion group. Associated to the deformation to the normal cone construction there is a  continuous field of $C^*$-algebras $\{ C^*_r (G_t)  \} _{t \in \R} $.  It turns out that  in the reductive case this continuous field is assembled from \emph{constant} fields of $C^*$-algebras (indeed commutative $C^*$-algebras)  by natural operations: extensions, Morita equivalences and direct limits.  This immediately proves the Connes-Kasparov isomorphism, and by making the assembly process explicit we obtain an explicit Mackey bijection. For details see \cite{Higson08} and \cite{Afgoustidis16}.

To repeat what we wrote earlier, a principal  goal of this paper is to study the continuous field $\{ C^*_r(G_t)\}_{t\in \R}$ in more detail, and indeed to characterize it up to isomorphism in the case where $G$ is a complex reductive group.  
To explain the method, it is helpful to start with a toy model case, in which $G$ is a semidirect product group 
\[
G =K\ltimes V
\]
associated to the action of a compact group $K$ on a real, finite-dimensional vector space $V$ (this is not a reductive group, of course).  Here the deformation to the normal cone associated to the embedding of $K$ into $G$ gives a smooth family of groups $\{ G_t\}$ that is isomorphic to the constant family of groups with fiber $G$. However $\{ G_t\}$ is not \emph{equal} to the constant family; to obtain an isomorphism to the constant family we must use the family of \emph{rescaling morphisms}
\begin{gather*}
\alpha_t \colon G  \longrightarrow G  \\
\alpha_t(k,v) = (k,tv)
\end{gather*}
for $t{\ne}0$.  Applying $\alpha_t$ to the fiber at $t{\ne}0$, and the identity morphism a $t{=}0$, we obtain   an isomorphism from the constant family of groups with fiber $G$ into the deformation to the normal cone family. 
The same rescaling morphisms induce an isomorphism from  the constant field of $C^*$-al\-gebras with fiber $C^*_r(G)$ to the deformation to the normal cone continuous field $\{ C^*_r(G_t)\}$.

Similar rescaling morphisms   do not exist at the group level on a real reductive group, except in trivial cases.   But one might ask whether they nonetheless exist on the reduced group $C^*$-algebra?  

The reason that one might guess that rescaling morphisms exist  at the $C^*$-algebra level is that the structure of the $C^*$-algebra is very closely tied to the structure of the tempered dual of $G$, and there is a natural rescaling operation on the tempered dual. Indeed the tempered dual  is parametrized by a combination of discrete and continuous parameters, with the  latter belonging to vector spaces, or quotients of vectors spaces by finite group actions. So the continuous parameters may be rescaled in the obvious way.  Moreover this rescaling operation plays a central role in the Mackey bijection.

Our first main result is that rescaling morphisms for connected complex reductive groups do indeed exist at the $C^*$-algebra level:

\begin{theorem*}
Let $G$ be a connected complex reductive group.
There is a one-parameter group of automorphisms
\[
\alpha_t \colon C^*_r(G) \longrightarrow C^*_r(G)  \qquad (t>0)
\]
that implements the rescaling action on the tempered dual of $G$ in the Mackey bijection.
\end{theorem*}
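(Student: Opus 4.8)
The plan is to reduce the statement to the explicit structure of $C^*_r(G)$ in the complex case and then write the rescaling automorphisms down by hand. First I would assemble the two classical facts that make complex groups manageable. Every unitary minimal principal series representation $\pi_{\sigma,\nu}=\operatorname{Ind}_{TAN}^{G}(\sigma\otimes e^{i\nu})$, with $\sigma$ a character of the compact torus $T$ and $\nu\in\mathfrak a^*$, is irreducible (Parthasarathy--Ranga Rao--Varadarajan, Wallach, Zhelobenko), and $\pi_{\sigma,\nu}\cong\pi_{\sigma',\nu'}$ exactly when $(\sigma',\nu')$ lies in the $W$-orbit of $(\sigma,\nu)$, so that $\widehat G_{\mathrm{temp}}\cong(\widehat T\times\mathfrak a^*)/W$. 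Combined with the Plancherel theorem and the Hausdorffness of this parameter space, work of Penington--Plymen and Wassermann identifies $C^*_r(G)$, via the operator-valued Fourier transform $f\mapsto(\pi_{\sigma,\nu}(f))$, with the algebra of all $W$-equivariant $C_0$-families $(\sigma,\nu)\mapsto T_{\sigma,\nu}\in\mathcal K(\mathcal H_\sigma)$, where in the compact picture $\pi_{\sigma,\nu}$ acts on the $\nu$-\emph{independent} Hilbert space $\mathcal H_\sigma=L^2_\sigma(K)$ and the equivariance is carried by the normalized (unitary) Knapp--Stein intertwining operators $\mathcal A_w(\sigma,\nu)\colon\mathcal H_\sigma\to\mathcal H_{w\sigma}$. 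Decomposing along $W$-orbits in $\widehat T$ gives $C^*_r(G)\cong\bigoplus_{[\sigma]}B_{[\sigma]}$ with $B_{[\sigma]}=\{F\in C_0(\mathfrak a^*,\mathcal K(\mathcal H_\sigma)):F(w\nu)=\mathcal A_w(\sigma,\nu)F(\nu)\mathcal A_w(\sigma,\nu)^{-1}\ \text{for all}\ w\in W_\sigma\}$, where $W_\sigma$ is the stabilizer of $\sigma$.

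On a block with $W_\sigma$ trivial the rescaling automorphism is simply $(\alpha_tF)(\nu)=F(t\nu)$: it visibly preserves $B_{[\sigma]}$, is a one-parameter group for $t>0$, and implements $\pi_{\sigma,\nu}\mapsto\pi_{\sigma,t\nu}$. For the remaining blocks I would remove the $\nu$-dependence of the gluing operators in a $W_\sigma$-equivariant way. The key observation is that $\mathcal A_w(\sigma,0)\colon\pi_{\sigma,0}\to\pi_{\sigma,0}$ is a scalar by Schur's lemma (irreducibility of $\pi_{\sigma,0}$), so the $W_\sigma$-equivariant $\mathcal K(\mathcal H_\sigma)$-bundle over $\mathfrak a^*$ defined by $\{\mathcal A_w(\sigma,\cdot)\}$ has trivial equivariant structure at the fixed point $0$; since $\mathfrak a^*$ is $W_\sigma$-equivariantly contractible via the linear retraction $\nu\mapsto s\nu$, that bundle is equivariantly trivial. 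Concretely, there is a strongly continuous $g_\sigma\colon\mathfrak a^*\to U(\mathcal H_\sigma)$ with $g_\sigma(w\nu)\mathcal A_w(\sigma,\nu)g_\sigma(\nu)^{-1}$ a scalar independent of $\nu$, for every $w\in W_\sigma$, so that conjugation by $g_\sigma$ identifies $B_{[\sigma]}$ with $C_0(\mathfrak a^*/W_\sigma)\otimes\mathcal K(\mathcal H_\sigma)$ --- in line with the ``assembly from constant commutative fields'' described in the introduction. Transporting the honest rescaling through $g_\sigma$ gives $(\alpha_tF)(\nu)=g_\sigma(\nu)^{-1}g_\sigma(t\nu)\,F(t\nu)\,g_\sigma(t\nu)^{-1}g_\sigma(\nu)$ on $B_{[\sigma]}$. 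Taking the direct sum over $[\sigma]$ produces $\alpha_t$ on $C^*_r(G)$, and it remains to dispatch the routine points: $\{\alpha_t\}_{t>0}$ is a point-norm continuous one-parameter group (inherited blockwise, with the $c_0$-direct sum handled by an $\varepsilon/3$ estimate), and the induced map on $\widehat G_{\mathrm{temp}}$ sends $\pi_{\sigma,\nu}$ to $\pi_{\sigma,t\nu}$, which is precisely the scaling corresponding under the Mackey bijection to the automorphism $(k,X)\mapsto(k,tX)$ of the Cartan motion group.

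The hard part, I expect, lies entirely in the analytic input: checking that the Fourier transform really is onto the full algebra of $W$-equivariant $C_0$-families (the completeness half of the complex Plancherel theorem), and --- more delicately --- controlling the normalized intertwining operators $\mathcal A_w(\sigma,\nu)$ finely enough to run the equivariant-trivialization step, namely their unitarity and strong continuity across the singular set and at $\nu=0$, together with the (projective) cocycle relations in the $W_\sigma$-variable. A secondary point is choosing the trivializations $g_\sigma$ coherently with the direct-sum decomposition and with the multiplicative parameter $t$; building $g_\sigma$ from the explicit equivariant retraction $\nu\mapsto s\nu$, which reduces the intertwining data to the scalars $\mathcal A_w(\sigma,0)$, should keep this under control. (One could instead try to transport the manifest rescaling automorphisms of $C^*(G_0)$ along the embedding $C^*(G_0)\hookrightarrow C^*_r(G)$ constructed later in the paper, but as that embedding is far from surjective, some version of the structural analysis above seems unavoidable.)
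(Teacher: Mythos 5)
Your proposal is correct in substance and starts from the same structural input as the paper (the Penington--Plymen/CCH description of $C^*_r(G)$ via the operator-valued Fourier transform), but it handles the Weyl-group symmetry by a genuinely different and more laborious route. The paper's Theorem~\ref{thm-structure-reductive-reduced-algebra} identifies $C^*_r(G)$ with $\bigoplus_{\sigma\in\widehat M_+} C_0\bigl(\lie{a}^*_{+,\sigma},\compop(L^2(K)^\sigma)\bigr)$ by \emph{restricting} $W_\sigma$-equivariant families to the closed positive chamber $\lie{a}^*_{+,\sigma}$; since that chamber is a cone, the formula $\alpha_{\sigma,t}(f)(\nu)=f(t^{-1}\nu)$ preserves it outright, and the intertwining operators never appear in the definition of $\alpha_t$ at all (the only place Schur's lemma for the irreducible $\pi_{\sigma,\nu}$ enters is in seeing that restriction to the chamber is onto, with no commutation constraints on the walls). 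You instead stay on all of $\lie{a}^*$, where the naive rescaling fails to preserve the equivariance condition because $\mathcal A_w(\sigma,t\nu)$ and $\mathcal A_w(\sigma,\nu)$ need not conjugate alike, and you repair this with a $W_\sigma$-equivariant trivialization $g_\sigma$ of the intertwiner cocycle built from the contraction $\nu\mapsto s\nu$ and the scalarity of $\mathcal A_w(\sigma,0)$. That step is plausible for complex groups but is exactly the analytic burden (unitarity, strong continuity in $\nu$, exact rather than projective cocycle relations for the normalized Knapp--Stein operators) that the paper's introduction advertises as avoidable in the complex case --- and your trivialization, once carried out, is essentially equivalent to the fundamental-domain isomorphism the paper quotes. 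So your route buys nothing extra here, though it is closer to what one would be forced to do for real groups, where reducibility on the walls makes the restriction-to-a-chamber shortcut unavailable. Your closing remark is also apt: transporting the rescaling from $C^*_r(G_0)$ through the embedding $\alpha$ would be circular, since that embedding is constructed from the $\alpha_t$.
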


See Section~\ref{sec-scaling-automorphism}.  Our second main result is that, at the level of continuous fields, there is a (unique) extension to $t{=}0$, as follows:

\begin{theorem*} Let $G$ be a connected complex reductive group and let $\{ f_t\}$ be a continuous section of the continuous field of $C^*$-algebras associated to the deformation to the normal cone construction for the inclusion of a maximal compact subgroup into $G$.  Then the  limit 
$\lim_{t\to 0} \alpha_t (f_t)$  exists in $C^*_r (G)$, and the formula
\[
\alpha (f_0) = \lim_{t\to 0} \alpha_t(f_t)
\]
defines  an embedding of $C^*$-algebras
\[
\alpha  \colon C^*_r(G_0)\longrightarrow C^*_r(G) .
\]
\end{theorem*}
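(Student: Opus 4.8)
\emph{Structure of the argument.} The plan is to isolate the one genuinely analytic assertion --- that $\lim_{t\to 0^{+}}\alpha_t(f_t)$ exists in $\redC(G)$ for every continuous section $\{f_t\}$ --- and to deduce everything else from it by soft continuous-field arguments. Granting the limit: $\alpha$ is well defined, because if $\{f_t\}$ and $\{g_t\}$ are continuous sections with $f_0=g_0$ then $\{f_t-g_t\}$ is a continuous section vanishing at $t=0$, so $\|f_t-g_t\|\to 0$, and since each $\alpha_t$ is an automorphism, hence isometric, the two limits coincide. Every element of the fiber $\redC(G_0)$ is $f_0$ for some section (a continuous field has enough sections), so $\alpha$ is defined on all of $\redC(G_0)$; since products, adjoints and linear combinations of sections are sections and the corresponding operations are norm-continuous, $\alpha$ is a $*$-homomorphism. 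Finally $\alpha$ is \emph{isometric}: $\|\alpha(f_0)\|=\lim_{t\to 0^{+}}\|\alpha_t(f_t)\|=\lim_{t\to 0^{+}}\|f_t\|=\|f_0\|$, the middle equality because each $\alpha_t$ is isometric and the last because the norm is continuous along sections of a continuous field. An isometric $*$-homomorphism is an embedding, so the whole statement reduces to existence of the limit. (The limit is taken over $t>0$; the case $t<0$ is handled by the symmetry $G_{-t}\cong G_t$ of the deformation.)

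\emph{Reduction to a generating family of sections.} To prove $\{\alpha_t(f_t)\}_{t>0}$ converges in norm it suffices, $\redC(G)$ being complete, to prove it is Cauchy as $t\to 0^{+}$. Proceed in three steps: (i) establish the limit for $\{f_t\}$ in a generating family $\mathcal{S}$ of sections; (ii) deduce it for the dense $*$-subalgebra $\mathcal{A}$ of sections generated by $\mathcal{S}$, since each $\alpha_t$ is a $*$-homomorphism and limits of convergent families are preserved by sums, products and adjoints; (iii) deduce it for an arbitrary continuous section $\{f_t\}$ by an $\varepsilon/3$ argument --- choose $\{g_t\}\in\mathcal{A}$ with $\sup_t\|f_t-g_t\|<\varepsilon$ (the section-algebra norm is the sup of the fiberwise norms), and use that each $\alpha_t$ is isometric and $\{\alpha_t(g_t)\}$ is Cauchy to get $\limsup_{s,t\to 0^{+}}\|\alpha_t(f_t)-\alpha_s(f_s)\|\le 2\varepsilon$. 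For $\mathcal{S}$ we take the smooth compactly supported sections of the deformation groupoid $\mathbb{N}_K G$ --- equivalently, those supported near $t=0$ in a deformation-to-the-normal-cone chart $(k,Y,t)\mapsto k\exp(tY)$ with $k\in K$, $Y\in\mathfrak{g}/\mathfrak{k}$ --- together with the analogous sections supported away from $t=0$, which are inert under the limit.

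\emph{The computation on $\mathcal{S}$.} For $\{f_t\}\in\mathcal{S}$ we pass to the spectral picture. Using the description of $\redC(G)$ for complex $G$ established in the earlier sections (the unitary principal series $\operatorname{Ind}_{MAN}^G(\sigma\otimes e^{i\nu})$ are irreducible and exhaust the tempered dual, so $\redC(G)\cong\bigoplus_{[\sigma]}\bigl(C_0(\mathfrak{a}^*)\otimes\mathcal{K}(\mathcal{H}_\sigma)\bigr)^{W_\sigma}$) and the form of $\alpha_t$ from the first theorem above (it acts on this picture by a dilation of the continuous parameter $\nu\in\mathfrak{a}^*$, conjugated by the principal-series intertwiners so as to remain bounded), one sees that a section in $\mathcal{S}$ appears in these coordinates as a fixed operator-valued profile precomposed with the \emph{inverse} dilation of $\nu$ --- this is Fourier duality between the flat direction $\mathfrak{a}\subset\mathfrak{g}/\mathfrak{k}$ occurring in the chart and its dual $\mathfrak{a}^*$, exactly as in the semidirect-product toy model of the introduction. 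The two dilations cancel, so $\alpha_t(f_t)$ equals a $t$-independent element of $\redC(G)$ up to a remainder tending to $0$ in norm (coming from the Jacobian of $\exp$ and the higher-order structure of the chart). Its limit is the element of $\redC(G)$ carrying the spectral data of $f_0\in\redC(G_0)$ under Mackey's orbit description of $\widehat{G_0}=\widehat{K\ltimes(\mathfrak{g}/\mathfrak{k})}$; thus $\alpha$ is identified with the inclusion of $\redC(G_0)$ determined by the Mackey bijection on spectral data.

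\emph{Main obstacle.} The one point requiring more than bookkeeping is the behavior along the singular locus: the walls in $\mathfrak{a}^*$ on which the isotropy $W_{\sigma,\nu}$ is nontrivial --- so that the $W_\sigma$-equivariance is a genuine constraint on sections of $\redC(G)$ --- and, on the other side, the non-generic coadjoint orbits of $K$ in $(\mathfrak{g}/\mathfrak{k})^*$ where the stabilizer jumps above the maximal torus. One must show that these loci and their attached gluing data rescale compatibly under $\alpha_t$ and that the rescaled sections of $\redC(G)$ converge precisely onto the image of $\redC(G_0)$. This is exactly the matching of reducibility patterns that underlies the Mackey bijection for complex groups, and it is what makes the argument special to the complex case: for general real reductive $G$ the unitary principal series are reducible at singular parameters in a way not governed by the motion-group orbit picture, and neither the automorphisms $\alpha_t$ nor the embedding $\alpha$ are available. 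A secondary, purely technical matter is controlling the chart remainder uniformly enough for convergence in norm; on $\mathcal{S}$ this is immediate because the remainder is a norm-small perturbation of a $t$-independent section.
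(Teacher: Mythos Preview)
Your soft deductions (well-definedness, $*$-homomorphism, isometry) from the existence of the limit are correct and match the paper's argument exactly. The reduction to a generating family is also the paper's first move.

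The gap is in your ``computation on $\mathcal{S}$''. You take $\mathcal{S}$ to be all smooth compactly supported sections and assert that in the spectral picture $\alpha_t(\lambda_t(f_t))$ is ``a $t$-independent element up to a remainder tending to $0$ in norm'', but you do not explain why the convergence is in \emph{norm}. For a general smooth compactly supported $f$ the operator $\pi_{\sigma,\nu}(\lambda_t(f_t))$ is compact but not finite rank, and is nonzero for infinitely many $\sigma\in\widehat M_+$; pointwise convergence (in $\nu$, in $\sigma$, in matrix coefficients) does not by itself give convergence in $\bigoplus_\sigma C_0(\lie a^*_{+,\sigma},\compop(L^2(K)^\sigma))$. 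The paper closes this with a further reduction you are missing: its generating family consists of the left- and right-$K$-\emph{finite} smooth compactly supported sections. For these, only finitely many $\sigma$ contribute (a fixed irreducible $K$-type has only finitely many $M$-weights), and for each such $\sigma$ the operators live in a fixed finite-dimensional block of $\compop(L^2(K)^\sigma)$; norm convergence then reduces to convergence of finitely many matrix coefficients, uniformly in $\nu$. That in turn follows from the explicit formula
\[
\langle\phi,\pi_{\sigma,\nu}(\alpha_t(\lambda_t(f_t)))\psi\rangle
=\int_{\lie a}\int_{\lie n}(\phi^*{*}f_t{*}\psi)\bigl(\exp(-tY)\exp(-tX)\bigr)\,e^{i\nu(X)}e^{t\rho(X)}\,dX\,dY,
\]
whose integrand depends on $t$ only through the deformation chart and hence converges uniformly in $\nu$ by the smooth-structure lemma for $\mathbb N_GK$. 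This is the precise content of your ``the two dilations cancel'': the $\nu$-dependence does indeed sit in a fixed Fourier factor $e^{i\nu(X)}$, but turning that observation into norm convergence requires the $K$-finite reduction.

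Your ``main obstacle'' paragraph misidentifies the difficulty. Weyl walls and jumps in $K$-isotropy play no role in the convergence argument: the paper works on closed positive chambers $\lie a^*_{+,\sigma}$ precisely to avoid intertwiners (so your remark that $\alpha_t$ is ``conjugated by principal-series intertwiners'' does not describe how it is actually defined here), and the integral formula above is uniform over the chamber with no case analysis at the walls. The real obstacle is the one you skipped: upgrading pointwise to norm convergence, and $K$-finiteness is the device that handles it.
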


See Theorems~\ref{thm-the-limit-exists} and \ref{thm-embedding} for the precise statements (we have omitted here   some details related to  the Haar measures on the groups $G_t$, which vary with $t$).


Now, given an embedding of  $C^*$-algebras $\alpha\colon B \to A$, there is a simple and obvious way to construct a continuous field of $C^*$-algebras with fibers 
\[
A_t = \begin{cases} A & t\ne 0 \\ B & t=0,
\end{cases}
\]
namely we take as continuous sections all those set-theoretic sections for which the formula 
\[
t \longmapsto \begin{cases}
a_t & t\ne 0 \\ 
\alpha(b_0) & t=0
\end{cases}
\]
defines a  continuous function from $\R$ to $A$.  Let us call this the \emph{mapping cone} continuous field associated to the inclusion.
Using mapping cones, we are able to characterize the continuous field associated to the deformation to the normal cone, as follows: 

\begin{theorem*}
Let $G$ be a connected complex reductive group.
The continuous field of $C^*$-algebras $\{ \, C^*_r (G_t)\, \}_{t\in \R}$ associated to the deformation to the normal cone construction is isomorphic to the mapping cone field for the embedding 
\[
\alpha  \colon C^*_r(G_0) \longrightarrow C^*_r (G).
\]
 Indeed the morphism 
\[
\{ f_t\} \longmapsto \{ \alpha_t (f_t) \} ,
\]
where $\alpha_0{=}\operatorname{id}$, is a bijection from continuous sections of the deformation to the normal cone field to continuous sections of the mapping cone field.
\end{theorem*}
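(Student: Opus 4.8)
The plan is to take the formula $\{f_t\}\mapsto\{\alpha_t(f_t)\}$, with $\alpha_0 = \operatorname{id}$, as the candidate isomorphism and to verify it by using that each of the two fields is determined, among all set-theoretic sections, by any \emph{total} family of continuous sections (one whose values at each $t$ are dense in the fibre). Write $\Gamma$ for the continuous sections of the deformation-to-the-normal-cone field $\{C^*_r(G_t)\}_{t\in\R}$ and $\Gamma'$ for the continuous sections of the mapping cone field of $\alpha\colon C^*_r(G_0)\to C^*_r(G)$. Over $\R^\times$ both fields have fibre $C^*_r(G)$ — for the deformation field via the standard trivialization, carrying along the $t$-dependent Haar normalizations suppressed in the statement — so the formula specifies a fibrewise family of maps: the scaling automorphism $\alpha_t$ for $t>0$, the identity of $C^*_r(G_0)$ for $t=0$, and for $t<0$ the composite $\alpha_{|t|}\circ\theta_*$, where $\theta$ is the Cartan involution of $G$ (a group automorphism fixing $K$ pointwise and acting by $-1$ on $\operatorname{Lie}(G)/\operatorname{Lie}(K)$) and $\theta_*$ the induced automorphism of $C^*_r(G)$. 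Each fibrewise map is an isometric $*$-isomorphism, so the induced map $\Phi$ on sections is automatically a linear, multiplicative, $*$-preserving isometry; hence once $\Phi$ is shown to restrict to a bijection $\Gamma\to\Gamma'$, it will simultaneously be an isomorphism of the two section $C^*$-algebras and, fibrewise, an isomorphism of continuous fields.

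\emph{Well-definedness of $\Phi$.} Fix a total, $C_0(\R)$-stable generating family $\Gamma_0\subseteq\Gamma$ — for instance the one built from compactly supported smooth functions on the groupoid $\{G_t\}_{t\in\R}$ that produces the continuous field structure in the first place — so that $\Gamma$ is obtained from $\Gamma_0$ by the $C_0(\R)$-module operations and locally uniform limits. For $f\in\Gamma_0$ the continuity of $t\mapsto\alpha_t(f_t)$ away from $0$ is routine, and the behaviour at $0$ is exactly the content of Theorems~\ref{thm-the-limit-exists} and~\ref{thm-embedding}: one has $\lim_{t\to0^+}\alpha_t(f_t) = \alpha(f_0)$, and applying those theorems to the reflected section $s\mapsto\theta_*(f_{-s})$, which is again a continuous section of the deformation field over $[0,\infty)$ with fibre $f_0$ at $0$ precisely because $\theta$ fixes $K$ and acts by $-1$ on the normal space, gives $\lim_{t\to0^-}\alpha_{|t|}(\theta_*(f_t)) = \alpha(f_0)$ as well. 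Thus $\Phi(f)\in\Gamma'$. Since $\Phi$ is fibrewise isometric it commutes with the $C_0(\R)$-module operations and with locally uniform limits, so $\Phi(\Gamma_0)\subseteq\Gamma'$ propagates to $\Phi(\Gamma)\subseteq\Gamma'$. Injectivity of $\Phi$ is clear, each fibrewise map being injective.

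\emph{Surjectivity of $\Phi$.} The family $\Phi(\Gamma_0)$ is itself total for the mapping cone field: at $t\ne0$ it equals $\alpha_t(\Gamma_0|_t)$, dense in $C^*_r(G)$ since $\Gamma_0|_t$ is dense and $\alpha_t$ is onto, and at $t=0$ it equals $\Gamma_0|_0$, dense in $C^*_r(G_0)$. On the other hand $\Phi(\Gamma)$, being the fibrewise-isometric image of the locally-uniform-complete $\Gamma$, is again a $C_0(\R)$-submodule of the sections that is closed under locally uniform limits. A locally-uniform-closed $C_0(\R)$-submodule of continuous sections containing a total family contains every continuous section, so $\Phi(\Gamma) = \Gamma'$. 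Hence $\Phi$ is a bijection $\Gamma\to\Gamma'$, which with the first paragraph completes the proof.

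The genuinely analytic input is entirely packaged in Theorems~\ref{thm-the-limit-exists} and~\ref{thm-embedding}, so the remainder is formal. The step I expect to demand the most care is the matching of the two ends of the real line at $t=0$: the mapping cone field carries the trivial reflection $t\leftrightarrow -t$, whereas the reflection symmetry of the deformation field is implemented on the reductive fibres by the Cartan involution of $G$ and by the identity of $G_0$ at $t=0$; this is why the formula for $\alpha_t$ must be corrected by $\theta_*$ on the negative axis, and one must check that $(g,t)\mapsto(\theta(g),-t)$ really does extend smoothly across $t=0$ to the identity morphism of $G_0$. A secondary, purely bookkeeping point is to propagate the $t$-dependent Haar-measure normalizations through the identification of the deformation field over $\R^\times$ with the constant field with fibre $C^*_r(G)$, exactly as was done in proving the earlier theorems.
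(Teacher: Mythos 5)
Your proof is correct and follows essentially the same route as the paper: reduce everything to showing that the image of each continuous section of the deformation field is a continuous section of the mapping cone field (continuity at $t{=}0$ being exactly Theorems~\ref{thm-the-limit-exists} and~\ref{thm-embedding} together with the definition of $\alpha$), and then conclude by the uniqueness of a continuous field structure containing a total, locally-uniformly-closed $C_0(\R)$-module of sections --- the paper compresses your surjectivity paragraph into a citation of Dixmier, Prop.~10.2.4. The only divergence is your treatment of $t<0$ via the Cartan involution (which does work, since $-d\theta$ is the identity on $\lie{g}/\lie{k}$); the paper instead defines $\alpha_t$ for negative $t$ directly using the opposite Weyl chamber $\lie{a}^*_{-,\sigma}$ and proves the limit theorem two-sidedly in one stroke, thereby avoiding the need to check how $\theta_*$ permutes the principal series parameters.
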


See Section~\ref{subsec-mapping-cone}  for further details (including the proper treatment of $\alpha_t$ when $t$ is negative).

To summarize, one might say that the continuous field is nothing more or less than the morphism 
\begin{equation*}
\alpha  \colon C^*_r (G_0) \longrightarrow C^*_r (G) .
\end{equation*}
What does this tell us about the Mackey bijection?  Each tempered irreducible unitary representation of $G$ corresponds to an irreducible representation 
\[
\pi \colon C^*_r (G) \longrightarrow \Bounded (H_\pi)
\]
(in fact the range is the ideal of compact operators, $\Compacts (H_\pi)$, but that is not relevant here). The composition of this representation of $C^*_r (G)$ with the embedding $\alpha$ above  is not necessarily irreducible, so composition with  $\alpha$ does not directly determine a map from the tempered dual of $G$ to the unitary dual of $G_0$.  However generically the restriction \emph{is} irreducible, and it turns out that this enough to determine a unique Mackey bijection:

\begin{theorem*}
Let $G$ be a connected complex reductive group.
There is a unique bijection $\mu$ from the tempered unitary dual of $G$ to the unitary dual of $G_0$ with the property that for every $\pi \in G_0$, $\mu(\pi)$ is a subrepresentation of $\pi\circ \alpha$.
\end{theorem*}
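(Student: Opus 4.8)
The plan is to reduce the theorem to an explicit description of the representations $\pi \circ \alpha$ on the unitary principal series, and then to force the bijection $\mu$ first on a dense open subset of the tempered dual and, after that, orbit by orbit on the rest.

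\emph{Step 1: the representations $\pi \circ \alpha$.} Let $K \subset G$ be maximal compact, $T \subset K$ a maximal torus, and $\mathfrak a \subset \mathfrak p$ the corresponding maximal abelian subspace. The tempered dual of $G$ consists of the unitary principal series $\pi_{\sigma,\nu}$ induced from the characters $\sigma \otimes e^{i\nu}$ of a Borel subgroup ($\sigma \in \widehat T$, $\nu \in \mathfrak a^*$), all irreducible, with $\pi_{\sigma,\nu} \cong \pi_{\sigma',\nu'}$ exactly when $(\sigma',\nu')$ and $(\sigma,\nu)$ are conjugate under the Weyl group $W$; so $\widehat G_{\mathrm{temp}} \cong (\widehat T \times \mathfrak a^*)/W$, fibred over $\mathfrak a^*/W$ with fibre $\widehat T/W_\nu$ above $[\nu]$, where $W_\nu = \mathrm{Stab}_W(\nu)$. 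On the other side, Mackey's description of $\widehat{G_0}$ for $G_0 = K \ltimes \mathfrak p$, together with the fact that for complex $G$ every $K$-orbit in $\mathfrak p^*$ meets $\mathfrak a^*$, gives $\widehat{G_0} \cong \bigsqcup_{[\nu] \in \mathfrak a^*/W} \widehat{K_\nu}$, where $K_\nu \subset K$ is the stabiliser of $\nu$ (a connected subgroup) and the representation attached to $(\nu,\tau)$ is $M_{\nu,\tau} := \Ind_{K_\nu \ltimes \mathfrak p}^{G_0}(e^{i\langle \nu,\,\cdot\,\rangle} \otimes \tau)$. Using the description of $\alpha$ as the $t \to 0$ limit of the scaling automorphisms $\alpha_t$ of Theorems~\ref{thm-the-limit-exists} and \ref{thm-embedding} — on the compact picture $H_{\pi_{\sigma,\nu}} = L^2(K/T, \mathcal L_\sigma)$ the $\alpha_t$ rescale the $\nu$-parameter of the principal-series operators, and the Haar-normalised $t \to 0$ limit picks out the symbol of the $\mathfrak p$-action — I would show that $\pi_{\sigma,\nu} \circ \alpha$ is the Mackey analogue of $\pi_{\sigma,\nu}$, namely
\[
\pi_{\sigma,\nu} \circ \alpha \;\cong\; \bigoplus_{\tau \in \widehat{K_\nu}} [\,\tau|_T : \sigma\,]\; M_{\nu,\tau},
\]
with underlying $K$-representation $\Ind_T^K \sigma$ in all cases. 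In particular every irreducible subrepresentation of $\pi_{\sigma,\nu} \circ \alpha$ lies in the block over $[\nu]$, and $\pi_{\sigma,\nu} \circ \alpha$ is irreducible precisely when $\nu$ is regular (then $K_\nu = T$ and it equals $M_{\nu,\sigma}$). This identity is the substantive point; the rest is bookkeeping.

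\emph{Step 2: $\mu$ preserves the fibration, and is forced on the regular locus.} Let $\mu$ be any bijection with the stated property. Because every irreducible subrepresentation of $\pi_{\sigma,\nu} \circ \alpha$ lies over $[\nu]$, the bijection $\mu$ maps the block of $\widehat G_{\mathrm{temp}}$ over $[\nu]$ into the block of $\widehat{G_0}$ over $[\nu]$; since these blocks partition the two duals and $\mu$ is a bijection, $\mu$ restricts to a bijection between corresponding blocks. Over a regular $[\nu]$ the representation $\pi_{\sigma,\nu} \circ \alpha = M_{\nu,\sigma}$ is irreducible, so $\mu(\pi_{\sigma,\nu}) = M_{\nu,\sigma}$ is forced; equivalently $\mu(\pi) = \pi \circ \alpha$ on the dense open set where $\pi \circ \alpha$ is irreducible.

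\emph{Step 3: the singular blocks, and uniqueness.} Fix a singular $[\nu]$ and identify $\widehat{K_\nu}$ with the monoid of $W_\nu$-dominant weights, i.e.\ with $\widehat T / W_\nu$; write $\lambda_\sigma$ for the dominant representative of $W_\nu \sigma$ and $\preceq$ for the dominance order. By Step 1, $M_{\nu,\tau}$ is a subrepresentation of $\pi_{\sigma,\nu} \circ \alpha$ if and only if $\sigma$ is a weight of $\tau$; as the weight set of $\tau$ is $W_\nu$-stable, this is equivalent to $\lambda_\sigma \preceq \mathrm{hw}(\tau)$. Hence the restriction of $\mu$ to the block over $[\nu]$ is a permutation $\beta$ of $\widehat{K_\nu}$ with $x \preceq \beta(x)$ for all $x$ (identifying $\widehat{K_\nu}$ with its set of highest weights). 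Now $(\widehat{K_\nu}, \preceq)$ has all principal downsets finite, the dominant weights $\preceq \lambda$ forming the finite set $\{\text{dominant}\} \cap \Conv(W_\nu \lambda)$. An elementary induction then shows that the only permutation $\beta$ of a poset with finite principal downsets satisfying $x \preceq \beta(x)$ for all $x$ is the identity: from $x \preceq \beta(x)$ one gets $\beta^{-1}(y) \preceq y$, so $\beta$ maps each finite downset $D_y$ into itself, and induction on $|D_y|$ — $\beta$ fixes $D_y \setminus \{y\}$ since its members have strictly smaller downsets — forces $\beta(y) = y$. Therefore $\mu(\pi_{\sigma,\nu}) = M_{\nu, \tau_{\lambda_\sigma}}$ is forced on every singular block too; combined with Step 2, $\mu$ is uniquely determined, by the uniform formula $\mu(\pi_{\sigma,\nu}) = M_{\nu, \tau_{\lambda_\sigma}}$ (which reduces to $M_{\nu,\sigma}$ for regular $\nu$).

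\emph{Step 4: existence.} It remains to verify that $\pi_{\sigma,\nu} \mapsto M_{\nu, \tau_{\lambda_\sigma}}$ is a well-defined bijection $\widehat G_{\mathrm{temp}} \to \widehat{G_0}$ with the required property: it is $W$-equivariantly well-defined, it is a bijection on each block (as $\tau \mapsto \mathrm{hw}(\tau)$ inverts $\sigma \mapsto \tau_{\lambda_\sigma}$), and $M_{\nu, \tau_{\lambda_\sigma}}$ is a subrepresentation of $\pi_{\sigma,\nu} \circ \alpha$ because $\lambda_\sigma \in W_\nu \sigma$ is a weight of $\tau_{\lambda_\sigma}$. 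One may finally observe that this $\mu$ coincides with the Mackey bijection of \cite{Higson08} and \cite{Afgoustidis16}, which is the promised new characterisation of it.

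\emph{Main obstacle.} Steps 2--4 are routine once the decomposition of $\pi_{\sigma,\nu} \circ \alpha$ is available; the work is in Step 1, where one must track the scaling automorphisms $\alpha_t$ and the defining limit of $\alpha$ through the induced-representation model of the principal series — in particular check that the limiting $\mathfrak p$-action is exactly the Mackey symbol $X \mapsto i\langle \nu, \mathrm{pr}_{\mathfrak a}\Ad(k)^{-1}X \rangle$ and that no $K$-type is lost in passing to the limit. If that computation has already been performed in the proofs of Theorems~\ref{thm-the-limit-exists} and \ref{thm-embedding}, then only Steps 2--4 remain.
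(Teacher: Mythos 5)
Your proposal is correct and follows essentially the same route as the paper: the decomposition of $\pi_{\sigma,\nu}\circ\alpha$ in your Step 1 is exactly Lemmas~\ref{lem-composition-of-pi-with-alpha} and \ref{lem-decomp-into-irreps} (obtained from the matrix-coefficient limit formula of Lemma~\ref{lem-matrix-coeff-formula2} followed by induction in stages), and your Step 3 argument --- a permutation $\beta$ of a poset with finite principal downsets satisfying $x\preceq\beta(x)$ must be the identity --- is the paper's argument that $\mu_\nu^{-1}$ preserves each finite set $S_\sigma$ and hence is the identity. The only (harmless) differences are that the paper does not single out the regular locus, since the poset argument covers all $\nu$ uniformly, and that your ``if and only if'' relating ``$\sigma$ is a weight of $\tau$'' to $\lambda_\sigma\preceq\mathrm{hw}(\tau)$ overstates one direction (convex-hull membership alone does not imply being a weight), though only the direction you actually use is needed.
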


See Section~\ref{sec-bijection-characterization}.  It is an outstanding problem to provide some sort of conceptual explanation for the Mackey bijection, and we do not know if the approach in this paper will contribute usefully to the solution.  At the moment, it seems to us that progress in this direction will depend on understanding the rescaling morphisms  $\alpha_t$, or perhaps their infinitesimal generator, more conceptually.  Given the way that the rescaling morphisms are defined (see Section~\ref{sec-scaling-automorphism}) that would, in turn, seem to depend on developing some kind of understanding of the relation between the Mackey bijection and the Plancherel measure, which enters into the formula for the inverse of the Fourier transform isomorphism for $G$.  

As for the problem of extending the approach of this paper to real groups, it will become evident later in this paper that the major issue is to understand the (normalized) intertwining isomorphisms between unitary principal series representations in much greater detail (and thanks to Harish-Chandra's work, this is in turn related to the theory of the Plancherel measure).  In the relatively simple case of complex groups the complexities surrounding intertwiners can be avoided altogether.

\section{Continuous Fields of Reduced Group C*-Algebras}
\label{sec-continuous-fields}

\subsection{Deformation spaces}
\label{subsec-deformation-spaces}

There is a \emph{deformation space} or \emph{deformation to the normal cone}  $\mathbb{N}_VM$ associated to the embedding of any smooth manifold $M$ as a closed submanifold of a smooth manifold $V$.  It is a smooth manifold itself, and it is equipped with a submersion onto the real line.  See for example \cite{Higson08} for an account adapted to the concerns of this paper, although the definition originates elsewhere and much earlier, in algebraic geometry (see \cite{Fulton84} for the algebraic-geometric perspective). 

Let us quickly review the construction in the case of interest to us, which is the inclusion of a maximal compact subgroup $K$ into an almost connected Lie group $G$. As a set, the deformation space is defined to be 
$$
\deformation{K}{G}=N_G K\times \{0\} \, \, \sqcup\,\,  \bigsqcup_{t\ne 0}  G\times \{ t\} 
$$
where $N_G  K$ is the normal bundle to $K$, 
\[
N_G K = TG\vert _K \,\big / \,TK.
\]
The deformation space  has a unique smooth manifold structure for which
\begin{enumerate}[\rm (i)]

\item The natural map $N_G K\to \R$ is smooth.

\item If $f$ is a smooth function  on $G$, then the function 
\[
\begin{cases} (g,t) \mapsto f(g) & t \ne 0 \\
(X_k,0)\mapsto f(k)
\end{cases}
\]
is smooth on $\mathbb{N}_G K$.

\item If $f$ is a smooth function  on $G$,  and if $f$ vanishes on $K$, then the function 
\[
\begin{cases} (g,t) \mapsto t^{-1} f(g) & t \ne 0 \\
(X_k,0)\mapsto X_k(f)
\end{cases}
\]
is smooth on $\mathbb{N}_G K$.

\item At every point, local coordinates can be selected from functions of the above types.

\end{enumerate} 
The commuting diagram of multiplication operations 
\[
\xymatrix{ K\times K \ar[r] \ar[d]& K\ar[d]  \\ 
G \times G \ar[r]&G 
}
\]
induces a diagram 
\[
\xymatrix{ TK\times TK \ar[r] \ar[d]& TK\ar[d]  \\ 
TG \times TG \ar[r]&TG  ,
}
\]
and we obtain from this a multiplication operation 
\[
N_G K \times N_G K \longrightarrow N_G K.
\]
It makes the normal bundle into a Lie group, and we obtain from $\mathbb{N}_GK$ a smooth family of Lie groups over $\R$ (that is, a Lie groupoid with object space $\R$ and source and target maps equal to one another).

If we  trivialize the tangent bundles on $G$ and $K$ by left translations, then we obtain an identification of bundles and Lie groups 
\[
N_G K \cong K \ltimes \lie{g}/ \lie {k}
\]
where on the right is the semidirect product group associated to the adjoint action.

\subsection{The associated continuous field of C*-algebras}

For $t{\in}\R$ let  us denote by $G_t $ the fiber over $t$ of the submersion 
\[
\mathbb{N}_G K \longrightarrow \R.
\]
It is a Lie group, and for  $t{\ne} 0$ it is  isomorphic to $G$ itself. 
Choose a Haar measure $\mu$  for $G$. Then equip the fibers    $G_t$ for $t{\neq} 0$ with the left Haar measure   $dg_t = |t|^{-d}dg$, 
where 
\begin{equation}
\label{eq-def-of-d}
d = \dim (G/K)
\end{equation}
(we shall use this notation throughout the paper).  The Haar measure on $G$ determines   an associated Haar measure $dg_0$ on $G_0$, and altogether the Haar measures that we have chosen for the fibers $G_t$ vary smoothly with $t$, in the sense that if $f$ is a smooth and compactly supported function on $\mathbb{N}_G K$, and if $f_t$ denotes the restriction of $f$ to $G_t$, then the integral 
\[
\int_{G_t} f_t(g_t) \; dg_t  
\]
is a smooth function of $t{\in} \R$.   Moreover if $f_1$ and $f_2$ are two smooth and compactly supported functions on $\mathbb{N}_G K$, then the convolution product 
\[
(f_{1,t}*f_{2,t}) (g_t) =  \int _{G_t } f_{1,t} (\gamma_t ) f_{2,t}(\gamma_t ^{-1}g_t)\; d  \gamma_t \qquad (g_t\in G_t)
\]
is a smooth and compactly supported function $\mathbb{N}_G K$, too.

The same convolution formula applied to functions on $G_t$ alone defines a product on $C_c^\infty(G_t)$, and   we shall denote by $C^*_r(G_t)$ the reduced $C^*$-algebra completion (in the norm that $C_c^\infty (G_t)$  obtains as left-convolution operators on $L^2 (G_t,dg_t)$, and the adjoint operation it obtains from the operator adjoint operation).    Compare  \cite[Sec.\;7.2]{Pedersen79}.

\begin{lemma}[See {\cite[Lemma 6.13]{Higson08}}] 
Let $G$ be an almost-connected Lie group and let $K$ be a maximal compact subgroup of $G$.
If $f$ is a smooth and compactly supported function on the deformation space $\deformation{G}{K}$, and if $f_t$ denotes its restriction to $G_t$, then the norm
 $\|f_t\|_{C^*_r (G_t)}$ 
is a continuous function of $t\in \R$. \qed
\end{lemma}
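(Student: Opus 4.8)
The plan is to prove continuity at a nonzero value of $t$ and at $t=0$ by somewhat different means, and in the latter case to establish an upper and a lower semicontinuity bound separately. At a point $t_0\ne 0$ the statement is essentially elementary. For $t\ne 0$ the group $G_t$ coincides with $G$; only the Haar measure $dg_t=|t|^{-d}\,dg$ differs, and conjugating the left regular representation by the evident unitary rescaling of $L^2(G_t)$ onto $L^2(G)$ shows that $\|f_t\|_{C^*_r(G_t)}=\bigl\||t|^{-d}f_t\bigr\|_{C^*_r(G)}$. Since $\|\cdot\|_{C^*_r(G)}\le\|\cdot\|_{L^1(G)}$, the reduced norm is $1$-Lipschitz for the $L^1$-norm; and because $f$ is smooth and compactly supported on $\mathbb{N}_G K$, the map $t\mapsto |t|^{-d}f_t$ is continuous from $\R\setminus\{0\}$ into $L^1(G)$ — the supports stay inside a fixed compact subset of $G$ for $t$ near $t_0$, on which $f_t\to f_{t_0}$ uniformly. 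Hence $t\mapsto\|f_t\|_{C^*_r(G_t)}$ is continuous on $\R\setminus\{0\}$.

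Lower semicontinuity at $t=0$ (indeed at every $t_0$) I would deduce using continuous-field language. The Hilbert spaces $L^2(G_t,dg_t)$ form a continuous field over $\R$ with the restrictions of functions in $C_c^\infty(\mathbb{N}_G K)$ as a fundamental family of continuous sections — this is exactly the smooth dependence on $t$ of the integrals $\int_{G_t}f_t\overline{h_t}\,dg_t$, which was noted above. Left convolution by $f$ then defines a continuous field of operators $\{\lambda_t(f_t)\}$, because $\lambda_t(f_t)h_t=(f\ast h)_t$ and $f\ast h$ again lies in $C_c^\infty(\mathbb{N}_G K)$, and likewise for the adjoints, since $f\mapsto f^\ast$ preserves $C_c^\infty(\mathbb{N}_G K)$. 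For any continuous field of operators the norm is lower semicontinuous: if $h$ is a section with $h_{t_0}\ne 0$, then $t\mapsto\|(f\ast h)_t\|/\|h_t\|$ is continuous near $t_0$ and bounded by $\|\lambda_t(f_t)\|$; taking the supremum over such $h$, whose values at $t_0$ are dense in $L^2(G_{t_0})$, yields $\liminf_{t\to t_0}\|f_t\|_{C^*_r(G_t)}\ge\|f_{t_0}\|_{C^*_r(G_{t_0})}$.

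The main obstacle is the complementary upper semicontinuity at $t=0$, that is, $\limsup_{t\to 0}\|f_t\|_{C^*_r(G_t)}\le\|f_0\|_{C^*_r(G_0)}$; this is genuinely delicate because the reduced norm is cut out by the single regular representation rather than by a supremum, and so carries no automatic upper semicontinuity. I would reach it through the full $C^*$-algebras. The smoothly varying Haar measures make $\mathbb{N}_G K$ a Lie groupoid over $\R$ with a smooth Haar system; by the standard theory of groupoid $C^*$-algebras its full $C^*$-algebra is a $C_0(\R)$-algebra whose fibre over $t$ is $C^*(G_t)$, and $f$ determines an element of it with value $f_t$ at $t$ (the fibre norms being dominated by the locally bounded quantities $\|f_t\|_{L^1(G_t)}$). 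The fibre norms of any $C_0(\R)$-algebra are upper semicontinuous, so $\limsup_{t\to 0}\|f_t\|_{C^*(G_t)}\le\|f_0\|_{C^*(G_0)}$. Combining this with $\|f_t\|_{C^*_r(G_t)}\le\|f_t\|_{C^*(G_t)}$ for every $t$ and — crucially — with the equality $\|f_0\|_{C^*(G_0)}=\|f_0\|_{C^*_r(G_0)}$, which holds because $G_0=K\ltimes(\lie{g}/\lie{k})$ is amenable (it has a normal vector subgroup with compact quotient), yields the required bound. Together with the lower bound this gives continuity at $0$.

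If one prefers to stay entirely within reduced norms, the upper semicontinuity at $0$ can instead be argued by hand: a failure would produce $t_n\to 0$, representations $\pi_n$ of $G_{t_n}$, and unit vectors $\xi_n$ whose normalized matrix coefficients, transported into the rescaled normal coordinates on $\mathbb{N}_G K$ described in Section~\ref{subsec-deformation-spaces}, subconverge — by a normal families (Arzel\`a--Ascoli) argument exploiting that positive-definite functions normalized at the identity are automatically equicontinuous — to a positive-definite function on $G_0$, whereupon passing to the limit in $\langle\pi_n(f_{t_n}^\ast\ast f_{t_n})\xi_n,\xi_n\rangle$ contradicts the definition of $\|f_0\|_{C^*(G_0)}$. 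Either way, it is the amenability of the Cartan motion group $G_0$ that makes the special fibre behave well, and I expect that verifying the transport of coordinates and the convergence of the relevant integrals through the degeneration $t\to 0$ is where the real work lies.
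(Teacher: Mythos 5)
Your argument is correct and is essentially the proof of the result as given in the reference cited in the statement (the present paper does not reprove it): elementary $L^1$-continuity away from $t=0$, lower semicontinuity of the reduced norm from the continuous field of regular representations on $\{L^2(G_t,dg_t)\}$, upper semicontinuity of the full norm from the $C_0(\R)$-algebra structure of the full groupoid $C^*$-algebra of $\mathbb{N}_GK$, and amenability of $G_0=K\ltimes\lie{g}/\lie{k}$ to identify the full and reduced norms at the special fibre. No changes are needed.
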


It follows that the smooth and compactly supported functions on $\mathbb{N}_G K$ generate the continuous sections of a continuous field of $C^*$-algebras over $\R$ with fibers $C^*_r (G_t)$ in the sense of \cite[Prop.\;10.2.3]{Dixmier77}.  We shall call this the \emph{deformation} continuous field of reduced group $C^*$-algebras associated to the inclusion of $K$ into $G$.

It will be helpful to think of the  $C^*$-algebras $C^*_r(G_t)$ as being  completely distinct from one another, even though for $t{\ne}0$ they may be viewed as completions of the same space of functions in equivalent norms.  When we wish to compare  $C^*_r(G_t)$ for different $t{\ne}0$, as we shall in Section~\ref{sec-limit-formula-and-embedding}, we shall do so using the canonical $*$-isomorphisms
\begin{equation}
\label{eq-lambda-t-isomorphism}
\lambda_t \colon C^*_r (G_t)\stackrel \cong \longrightarrow C^*_r(G)
\end{equation}
associated to the isomorphisms $G_t{\cong }G$. These are given by the formulas 
\[
\lambda_t \colon f \longmapsto \Bigl [ g \mapsto |t|^{-d} f(g,t)
\Bigr ]
\]
for $f\in C_c^\infty (G_t)$.  The factor $|t|^{-d}$ accounts for the change in Haar measures.   

The isomorphisms $\lambda_t$ may also be described as follows. For $t{\ne}0$ the left translation  action of $G_t{\cong} G$ on itself integrates to a $C^*$-algebra representation
\[
\lambda_t \colon C^*_r (G_t)  \longrightarrow \mathfrak{B}(L^2 (G,dg)) .
\]
The image is independent of $t$, and is the image of the regular representation of $C^*_r(G)$ itself.  So we obtain $*$-isomorphisms as above.

\subsection{The case of reductive groups}
\label{subsec-deformations-for-reductive-groups}
If $G$ is a real reductive group, then the deformation space $\mathbb{N}_GK$ may be given a more concrete form using the following basic structural facts  about $G$.
Fix  a \emph{Cartan decomposition}  $\lie{g} = \lie{k}\oplus \lie{s}$ for the Lie algebra of $G$.  Then of course $\lie{g}/\lie{k}\cong \lie{s}$ and 
\begin{equation}
\label{eq-motion-group-and-cartan-decomp}
K\ltimes \lie{g}/\lie{k}\cong K\ltimes \lie{s}.
\end{equation}
Next, choose  a maximal abelian subspace of $\lie{a}\subseteq \lie{s}$ and an Iwasawa decomposition 
\[
G = K A N
\]
where $A = \exp [ \lie{a}]$ (any two Iwasawa decompositions are conjugate by an element of $K$).  The smooth map 
\begin{gather*}
K \times \lie{a} \times \lie{n} \longrightarrow G \\
(k,X,Y)\longmapsto k \exp(X)\exp(Y)
\end{gather*}
is then a diffeomorphism.  See for example \cite{Knapp02} for all this. 

If we view $K$ as a submanifold of $K{\times}\lie{a}{\times}\lie{n}$ via the inclusion
\begin{gather*}
K \longrightarrow K \times \lie{a} \times \lie{n}  \\
k \longmapsto (k , 0, 0) ,
\end{gather*}
then the functoriality of the deformation space construction (see for example \cite[p.303]{HSSHigson18}) and the commutativity of the diagram 
\[
\xymatrix{
K \ar@{=}[r]\ar[d] & K\ar[d]\\
 K \times \lie{a} \times \lie{n} \ar[r]& G
}
\]
allows us to view $\mathbb{N}_GK$ as the deformation space associated to the inclusion of the zero section into a (trivial) vector bundle.  For the latter, see \cite[Ex.\;4.6]{Higson10}. We obtain a diffeomorphism
\[
K \times \lie{a}\times \lie{n} \times \R  \longrightarrow \mathbb{N}_GK
\]
given by the formula 
\[
(k,X,Y,t) \longrightarrow 
	\begin{cases} 
	\bigl ( k \exp(tX)\exp(tY), t \bigr ) & t \ne 0 \\
	\bigl((k,[X{+}Y]),0\bigr ) & t = 0.
	\end{cases}
\]
Here  $[X{+}Y]$ denotes the class in the quotient vector space $\lie{g}/\lie{k}$ associated to the vector $X{+}Y \in \lie{g}$, or equivalently, keeping in mind the isomorphism \eqref{eq-motion-group-and-cartan-decomp}, the projection of $X{+}Y\in \lie{a}\oplus\lie{n} $ onto the summand $\lie{s}$ in the Cartan decomposition $\lie{g}=\lie{k}\oplus\lie{s}$.
 
This computation has the following consequence that will be of central importance later on:
 
\begin{lemma} 
\label{lem-smooth-function-on-def-space}
A function $f\colon \mathbb{N}_GK \to \C$ is smooth and compactly supported if and only if the function  
\[
(k,X,Y,t) \longmapsto 
	\begin{cases}
	f(k\exp(t^{-1}X)\exp(t^{-1}Y)) & t \ne 0 \\
	f(k, [X{+}Y]),0)  &  t = 0
	\end{cases}
\]
is smooth and compactly supported on $K{\times}\lie{a}{\times} \lie{n}{\times} \R$.\qed
\end{lemma}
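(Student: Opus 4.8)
The plan is to read the lemma off directly from the global coordinate chart on $\mathbb{N}_GK$ recorded in the preceding paragraph, that is, from the diffeomorphism
\[
\Phi\colon K\times\lie{a}\times\lie{n}\times\R \stackrel{\cong}{\longrightarrow}\mathbb{N}_GK ,\qquad
\Phi(k,X,Y,t)=\begin{cases}\bigl(k\exp(tX)\exp(tY),\,t\bigr) & t\ne 0,\\ \bigl((k,[X{+}Y]),0\bigr) & t=0 .\end{cases}
\]
First I would note that, once $G$ is identified with $K\times\lie{a}\times\lie{n}$ by means of the Iwasawa diffeomorphism $(k,X,Y)\mapsto k\exp(X)\exp(Y)$, the function appearing in the statement of the lemma is precisely the coordinate expression $f\circ\Phi$ of $f$ in this chart: on the fibre over $t\ne 0$ it is the displayed formula, and on the fibre over $t=0$ the prescribed value $f\bigl((k,[X{+}Y]),0\bigr)$ is exactly the value forced by $\Phi$, since by construction $[X{+}Y]$ is the class in $\lie{g}/\lie{k}$ produced in the limit $t\to 0$ (this is what properties (ii)--(iv) of $\mathbb{N}_GK$ encode, transported through the zero-section model as above). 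So once this identification is made, the lemma follows formally.

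Indeed, since $\Phi$ is a diffeomorphism, composition with $\Phi$ is a bijection of $C^\infty(\mathbb{N}_GK)$ onto $C^\infty(K\times\lie{a}\times\lie{n}\times\R)$, so $f$ is smooth if and only if $f\circ\Phi$ is smooth; and since $\Phi$ and $\Phi^{-1}$ are continuous they carry compact sets to compact sets, so $f$ has compact support if and only if $f\circ\Phi$ does. Conjoining the two equivalences proves the lemma, in both directions simultaneously.

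I do not anticipate any genuine obstacle: the content of the lemma is just the repackaging of $\Phi$. The one place that calls for a line of verification rather than a citation is the behaviour across $t=0$ — namely that the two branches of the displayed formula glue into the single smooth function $f\circ\Phi$ — and this is already subsumed in the construction of $\Phi$ recalled above, so it amounts to bookkeeping rather than a new argument.
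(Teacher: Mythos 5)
Your argument is the paper's own: the lemma carries a terminal \qed{} precisely because it is the tautological transfer of smoothness and compact support along the diffeomorphism $\Phi\colon K\times\lie{a}\times\lie{n}\times\R\to\mathbb{N}_GK$ constructed in the paragraph immediately preceding it, and your two observations (pullback by a diffeomorphism is a bijection on $C^\infty$, and a homeomorphism preserves compactness of supports) are exactly what that \qed{} stands for. The one point you should make explicit rather than elide is that identifying the displayed function with $f\circ\Phi$ requires reading $\exp(t^{-1}X)\exp(t^{-1}Y)$ as $\exp(tX)\exp(tY)$ (an evident typo in the statement): taken literally, the displayed function is $(k,X,Y,t)\mapsto (f\circ\Phi)(k,t^{-2}X,t^{-2}Y,t)$, which for compactly supported $f$ vanishes for all small $t\neq 0$ once $(X,Y)\neq(0,0)$ and therefore cannot glue smoothly to the prescribed value $f((k,[X{+}Y]),0)$ at $t=0$. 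With the exponents corrected to $tX,tY$ --- the form actually used later, e.g.\ in Lemma~\ref{lem-matrix-coeff-formula2} --- your proof is complete and coincides with the intended one.
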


\section{Structure of Reduced Group C*-Algebras}
\label{sec-group-algebras}

The main purpose of this section is to review  the detailed description of the $C^*$-algebra $C^*_r (G)$, which was presented first in \cite{PeningtonPlymen83}.   This we shall use throughout the paper.  We shall also review the classification of the irreducible unitary representations of the motion group, due to Mackey \cite{Mackey49}, which we shall use in the final section.  For completeness we shall also review the structure of $C^*_r (G_0)$; this will not play a significant role in what follows, but among other things the unitary dual of $G_0$ is most easily determined from the structure of $C^*_r (G_0)$.

\subsection{Connected complex reductive groups}

Let $G$ be a complex, connected, reductive group.  To describe its tempered unitary dual and reduced group $C^*$-algebra we begin with an Iwasawa decomposition $G=KAN$. Let $M$ be the centralizer in $K$ of $A$, which is a maximal torus in $K$, and let $P$ be the (\emph{Borel} or \emph{minimal parabolic}) subgroup $MAN$.  The group $N$ is a normal subgroup of $P$, and as a result of this, if $\sigma$ is a character of $M$, and if $\nu\in \mathfrak{a}^*$, then the formula 
\[
\sigma\otimes \exp(i\nu) \colon m\cdot \exp(X)\cdot n \longmapsto \sigma(m)\cdot \exp(i\nu(X))
\]
defines a unitary character of $P$.  The \emph{unitary principal series representation} of $G$ associated to $\sigma$ and $\nu$ (and the given choice of minimal parabolic subgroup) is the unitarily induced representation 
\[
\pi_{\sigma,\nu} = \Ind_P^G \;\sigma\otimes \exp(i \nu).
\]  
We shall need the following explicit description of this representation.   The Hilbert space on which it acts is the completion of the space of all smooth functions $\psi \colon G \to \C$ such that 
\begin{equation}
\label{eq-function-in-induced-rep-space}
 \psi(gm\exp(X)n)=e^{-(\rho+i \nu)(X)}\sigma(m)^{-1}\psi(g)
 \end{equation}
for all $g{\in} G$, $m{\in} M$, $X{ \in} \lie{a}$ and $n{\in} N$,  in the norm  associated to the inner product
\[
 \langle \phi,\psi \rangle = \int_K \overline{\phi(k)} \psi(k)\; d k .
 \]
Here $
\rho\in \lie{a}^*$ is the half-sum of the positive restricted roots  associated to our choice of $N$ (its presence makes the representation $\pi_{\sigma,\nu}$ unitary; other than that we shall not need any further information about it).   The action of $G$ on the functions \eqref{eq-function-in-induced-rep-space} is by left translation. 

Thanks to the Iwasawa decomposition, a function satisfying  \eqref{eq-function-in-induced-rep-space} is completely determined by its restriction to $K$, and we find that the Hilbert space of our unitary principal series representation identifies, via restriction of functions to $K$, with the Hilbert space 
\[
L^2(K)^{\sigma}= \bigl \{ \, \psi :K \stackrel{L^2}\rightarrow \complex : \psi (km)= \sigma(m)^{-1}\psi (k)\;\quad  \forall m\in M, \; \forall k \in K  \, \bigr \} .
\]
We shall work exclusively with this realization from now on.  An advantage of this realization  is that the Hilbert space for the principal series representation associated to $\sigma$ and $\nu$ depends only on $\sigma$ and not on $\nu$.  A disadvantage is that the formula for the action of $G$ is a bit more complicated.  To describe it we need the Iwasawa decomposition of elements of $G$: 
\begin{equation}
\label{eq-iwasawa-for-elements}
\qquad   g= \kappa(g)\cdot e^{H(g)}\cdot n \quad  ( \kappa(g) \in K, \quad X\in \lie{a},\quad n \in N).
\end{equation}
With this notation,
\begin{equation}
\label{eq-action-of-g-in-compact-form}
( \pi_{\sigma,\nu}(g)\phi) (k)=e^{-(\rho+i \nu)H(g^{-1}k)}  \phi(\kappa(g^{-1}k))
\end{equation}
for $\phi\in L^2(K)^\sigma$.

The Weyl group of $K$,
\begin{equation}
\label{eq-weyl-group}
W = N_K(M) / M ,
\end{equation}
acts by conjugation on  $M$, of course, but it also acts by conjugation on $A$ (this is because the complexification of $M$ is $MA$).  So $W$ acts on the parameter space $\widehat M \times \lie{a}^*$ for the unitary principal series.  The main facts  about the unitary principal series of $G$ may then be summarized as follows:

\begin{theorem}
Let $G$ be a connected, complex reductive group.
\begin{enumerate}[\rm (i)]

\item 
Every unitary principal series representation is an irreducible, tempered unitary representation of $G$

\item Every irreducible, tempered unitary representation of $G$ is equivalent to some unitary principal series representation.

\item  Two  unitary principal series representations $\pi_{\sigma',\nu'}$ and $\pi_{\sigma'',\nu''}$ are equivalent if and only if  
the pairs $(\sigma',\nu'),(\sigma'',\nu'')\in \widehat M {\times} \lie{a}^*$ are conjugate by an element of $W$. \qed
\end{enumerate}

\end{theorem}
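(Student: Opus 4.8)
All three statements are classical: (i) and (iii) go back to Gelfand--Naimark, Zhelobenko and Harish-Chandra, and the whole package is subsumed in the description of $\redC(G)$ due to Penington and Plymen \cite{PeningtonPlymen83} (see \cite{Knapp02} for the structure theory used below). My plan is to isolate the ingredients and to point out where the hypothesis that $G$ is complex actually does the work. Two parts of (i) are formal: unitarity of $\pi_{\sigma,\nu}$ is precisely the role of the $\rho$-shift in \eqref{eq-function-in-induced-rep-space} (one is unitarily inducing a unitary character of $P=MAN$), and temperedness is general — $\pi_{\sigma,\nu}$ is induced from a unitary character of the amenable group $MAN$, hence is weakly contained in the regular representation of $G$, hence tempered (equivalently, its $K$-finite matrix coefficients have Harish-Chandra leading exponents $\{\,iw\nu-\rho:w\in W\,\}$, which satisfy Casselman's criterion). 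So the content of (i) is the irreducibility of $\pi_{\sigma,\nu}$, and this is the step I expect to be the real obstacle.

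Irreducibility is also the only place the hypothesis ``$G$ complex'' genuinely enters. For a complex reductive group $M$ is a connected torus and every restricted root of $(\lie g,\lie a)$ has multiplicity two, so the subgroup generated by the $\pm\alpha$ root spaces is locally isomorphic to $SL(2,\C)$; and the principal series of $SL(2,\C)$ induced from a character of its circle subgroup and a \emph{real} continuous parameter are all irreducible — there is no reducibility on the unitary axis (this is exactly what fails for $SL(2,\R)$, where $M$ is disconnected). Feeding this into the Knapp--Stein analysis of the commuting algebra of a minimal principal series — that algebra is controlled by the rank-one reductions along the roots fixing $\sigma$ — one obtains $\operatorname{End}_G(\pi_{\sigma,\nu})=\C$ for all $\sigma\in\widehat M$ and all $\nu\in\lie a^*$. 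I would simply cite this (Zhelobenko; Penington--Plymen \cite{PeningtonPlymen83}; or the treatment of complex groups in Knapp's book on semisimple groups), since a self-contained argument — whether Zhelobenko's, or a direct computation with the rank-one intertwining integrals — is what would take real work.

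For (ii) the plan is to combine Harish-Chandra's subrepresentation theorem for tempered representations with the structure of complex groups. Every irreducible tempered representation of $G$ is a subrepresentation of $\Ind_Q^G(\delta\otimes e^{i\nu})$ for some parabolic $Q=M_Q A_Q N_Q$ and some discrete series $\delta$ of $M_Q$. But a Levi factor of a complex reductive group is again complex reductive, and such a group has discrete series only when $M_Q$ is compact; for a complex group this forces $Q$ to be the minimal parabolic $P=MAN$ with $M_Q=M$ (for $GL_n(\C)$, for instance, the relevant $M_Q$ is a product $\prod_i GL_{n_i}(\C)^1$, compact only when every $n_i=1$). The discrete series of the compact torus $M$ are its characters $\sigma$, so every irreducible tempered representation of $G$ embeds in some $\pi_{\sigma,\nu}$, and by the irreducibility in (i) the embedding is an isomorphism. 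This gives (ii).

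For (iii), the ``if'' direction is constructive: if $(\sigma',\nu')=w\cdot(\sigma,\nu)$, the normalized standard intertwining operator $A_w\colon\pi_{\sigma,\nu}\to\pi_{\sigma',\nu'}$ is a unitary isomorphism, since for complex $G$ it has no poles or sign changes on the unitary axis $\nu\in\lie a^*$ (the same input as for irreducibility). For the ``only if'' direction, restriction to $K$ gives $\Ind_M^K\sigma\cong\Ind_M^K\sigma'$, and since $M$ is a maximal torus of $K$, Frobenius reciprocity (the multiplicity of $\tau\in\widehat K$ in $\Ind_M^K\sigma$ equals the multiplicity of the weight of $\sigma$ in $\tau$) together with a standard weight-multiplicity argument forces $\sigma'\in W\sigma$; using the ``if'' direction one reduces to the case $\sigma=\sigma'$, and it remains to place $\nu'$ in the orbit of $\nu$ under $W_\sigma=\{w\in W:w\sigma=\sigma\}$. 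For generic $\nu$ this follows from the infinitesimal character of $\pi_{\sigma,\nu}$ (the explicit ``doubled'' character of $\mathcal Z(\lie g)$ attached to $(\sigma,\nu)$), and in general it is part of the Penington--Plymen description of $\redC(G)$ \cite{PeningtonPlymen83}, which presents it as a direct sum, over the $W$-orbits in $\widehat M$, of algebras of sections of matrix bundles over the quotients $\lie a^*/W_\sigma$ — so (iii) can simply be read off from that structure theorem.
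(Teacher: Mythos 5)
Your proposal is correct and matches the paper's treatment: the paper states this theorem without proof, attributing it to Gelfand--Naimark, Harish-Chandra, and Wallach (with \cite{CCH16} for the closedness of the principal series in the tempered dual), exactly the classical package you invoke. Your additional sketch of where each ingredient comes from --- the $\rho$-shift for unitarity, amenability of $MAN$ for temperedness, the rank-one/connected-$M$ analysis for irreducibility, the cuspidal-parabolic argument for (ii), and intertwiners plus weight multiplicities and infinitesimal characters for (iii) --- is accurate, with the genuinely hard step (irreducibility) correctly identified and, as in the paper, delegated to the literature.
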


These facts are essentially due to Gelfand and Naimark when $G$ is a  classical group \cite{GelfandNaimark50}, and to Harish-Chandra \cite{Harish-Chandra54} in general, although the irreducibility of the unitary principal series in full generality is due to Wallach
\cite{Wallach71}. (The question of whether or not  the unitary principal series defines a \emph{closed} subset of the unitary dual, and hence accounts for  the full tempered dual, was not initially addressed.  But see for example \cite{CCH16} for this issue.)

\begin{definition}
\label{def-iota-notation}
Let  $\sigma\in \widehat M$ and let $\nu \in \mathfrak{a}^*$.   We shall write 
\[
\iota (\sigma,\nu) \in \widehat G_r
\]
for the equivalence class of the irreducible unitary principal series representation
\[
\pi_{\sigma,\nu} = \Ind_{P}^{G} \sigma \otimes e^{i \nu}
\]
of the   group $G$.
\end{definition}

Now for  $f\in C_c^\infty (G)$, we define
$$
\pi_{\sigma,\nu}(f)\phi=\int_G f(g) \pi_{\sigma,\nu}(g)\phi\; dg
$$
for $\phi\in L^2(K)^\sigma$.   The formula defines a $C^*$-algebra representation 
\[
\pi_{\sigma,\nu} \colon C^*_r (G) \longrightarrow \mathfrak{B} (L^2 (K)^\sigma ).
\]

\begin{theorem}[See for example {\cite[Cor.\;4.12]{CCH16}}]
\label{thm-reductive-riemann-lebesgue}
Let $G$ be  a connected complex reductive group and let $\sigma\in \widehat M$.   There is a  $C^*$-algebra morphism
\[
  \pi_\sigma \colon C_r^*(G)  \longrightarrow   C_0  (\mathfrak{a}^*,\compop(L^2(K)^\sigma)  )
\]
such that $\pi_\sigma(f)(\nu) = \pi_{\sigma,\nu}(f)$ for every $\nu\in \lie{a}^*$. \qed
\end{theorem}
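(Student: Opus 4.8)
The plan is to define the map first on the dense convolution subalgebra $C_c^\infty(G)$ by the evident formula $\pi_\sigma(f)(\nu)=\pi_{\sigma,\nu}(f)$, to show that it takes values in $C_0(\lie{a}^*,\compop(L^2(K)^\sigma))$, to check that it is a $*$-homomorphism there, and finally to extend it to all of $C^*_r(G)$ by continuity. The decisive computation is to realize $\pi_{\sigma,\nu}(f)$ as an integral operator whose kernel is, in the variable $\nu$, a Euclidean Fourier transform. Starting from the integrated form of \eqref{eq-action-of-g-in-compact-form}, substituting $h=g^{-1}k$ (legitimate since $G$ is unimodular, and giving $g^{-1}k=h$, $f(g)=f(kh^{-1})$), and then decomposing the $h$-integral through the Iwasawa diffeomorphism $K\times\lie{a}\times N\to G$ with $dh=e^{2\rho(X)}\,dk'\,dX\,dn$ for $h=k'e^{X}n$, one obtains $(\pi_{\sigma,\nu}(f)\phi)(k)=\int_K K_{\sigma,\nu}(k,k')\,\phi(k')\,dk'$ with
\[
K_{\sigma,\nu}(k,k')=\int_{\lie{a}}F(k,k',X)\,e^{-i\nu(X)}\,dX,\qquad F(k,k',X)=e^{\rho(X)}\int_N f\bigl(kn^{-1}e^{-X}k'^{-1}\bigr)\,dn .
\]
The crucial point is that $F$ is smooth on $K\times K\times\lie{a}$ and, for fixed $(k,k')$, compactly supported in $X$ uniformly in $(k,k')$: if $f(kn^{-1}e^{-X}k'^{-1})\neq 0$ then $n^{-1}e^{-X}$ lies in the compact set $K^{-1}\operatorname{supp}(f)\,K$, and since the multiplication $N\times A\to NA$ is a diffeomorphism onto a closed submanifold, this confines $(n,X)$, and in particular $X$, to a fixed compact subset of $\lie{a}$. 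Thus $K_{\sigma,\nu}(k,k')$ is the Fourier transform of a compactly supported smooth function of $X$.

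From this the three required properties follow. Because $K$ is compact and $K_{\sigma,\nu}$ is continuous on $K\times K$, each $\pi_{\sigma,\nu}(f)$ is Hilbert--Schmidt, hence lies in $\compop(L^2(K)^\sigma)$. Norm continuity of $\nu\mapsto\pi_{\sigma,\nu}(f)$ follows from continuity of $\nu\mapsto K_{\sigma,\nu}$ together with $\|\cdot\|_{\mathrm{op}}\le\|\cdot\|_{\mathrm{HS}}$ and dominated convergence over the compact set $K\times K$. Vanishing at infinity is exactly a Riemann--Lebesgue statement: since $F(k,k',\cdot)$ is smooth with support in a fixed compact subset of $\lie{a}$ and with all $X$-derivatives bounded uniformly in $(k,k')$, its Fourier transform $K_{\sigma,\nu}(k,k')$ decays faster than any power of $\|\nu\|$, uniformly in $(k,k')$, whence $\|\pi_{\sigma,\nu}(f)\|_{\mathrm{HS}}\to 0$ as $\nu\to\infty$. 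Therefore $f\mapsto\pi_\sigma(f)$ maps $C_c^\infty(G)$ into $C_0(\lie{a}^*,\compop(L^2(K)^\sigma))$.

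It then remains to upgrade this to a $C^*$-morphism on $C^*_r(G)$. For each fixed $\nu$, $\pi_{\sigma,\nu}$ is the integrated form of a unitary representation, so it is a $*$-representation of the convolution algebra $C_c^\infty(G)$; since the product and involution on $C_0(\lie{a}^*,\compop(L^2(K)^\sigma))$ are computed pointwise in $\nu$, the map $\pi_\sigma$ is a $*$-homomorphism on $C_c^\infty(G)$. To extend it continuously it suffices to bound $\|\pi_\sigma(f)\|=\sup_\nu\|\pi_{\sigma,\nu}(f)\|$ by $\|f\|_{C^*_r(G)}$; but the principal series representations $\pi_{\sigma,\nu}$ are tempered by part (i) of the preceding theorem, hence weakly contained in the regular representation, which gives precisely $\|\pi_{\sigma,\nu}(f)\|\le\|f\|_{C^*_r(G)}$ for all $\nu$. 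Thus $\pi_\sigma$ extends uniquely to a morphism $C^*_r(G)\to C_0(\lie{a}^*,\compop(L^2(K)^\sigma))$, and the identity $\pi_\sigma(f)(\nu)=\pi_{\sigma,\nu}(f)$ persists for all $f$ by passing to the limit over $C_c^\infty$-approximants. I expect the one genuinely delicate step to be the uniform compact support of $F$ in the $X$-variable, i.e.\ controlling the Iwasawa behaviour of $n^{-1}e^{-X}$ on the support of $f$, since all three analytic properties downstream rest on identifying $K_{\sigma,\nu}$ as the Fourier transform of such a family; the temperedness input for the final extension is already recorded, so that step is routine once the estimate is in hand.
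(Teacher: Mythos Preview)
The paper does not actually prove this theorem: it is stated with a citation to \cite[Cor.\;4.12]{CCH16} and closed with a \qed. So there is no argument in the paper to compare against; the authors treat it as an imported fact.

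Your proof is correct and self-contained. The integral-kernel computation is right: after the substitution $h=g^{-1}k$ and the Iwasawa factorization $h=k'e^{X}n$, one lands exactly on the Fourier-transform expression you wrote, and the key support claim is justified by the Iwasawa diffeomorphism (writing $n^{-1}e^{-X}=e^{-X}\cdot(e^{X}n^{-1}e^{-X})\in AN$ shows the $A$-component is $e^{-X}$, so compactness of $K\cdot\operatorname{supp}(f)\cdot K$ confines $X$ uniformly). The remaining steps---Hilbert--Schmidt hence compact, uniform Riemann--Lebesgue decay, pointwise $*$-homomorphism, and the norm bound from temperedness---are all sound as stated.

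By contrast, the reference \cite{CCH16} deduces results of this type from a general structural analysis of $C^*_r(G)$ via Harish-Chandra's Plancherel theory and wave packets (this is the machinery the present paper invokes again in the proof of Theorem~\ref{thm-structure-reductive-reduced-algebra}). Your argument trades that machinery for a direct Fourier-analytic estimate tailored to the minimal principal series, which is entirely adequate here since for a complex group the minimal parabolic already exhausts the tempered dual. The only point worth flagging is that your appeal to ``part (i) of the preceding theorem'' for temperedness is really the input that the principal series are weakly contained in the regular representation; this is standard (and is what \cite{CCH16} also uses), but it is the one place where you are still importing a nontrivial representation-theoretic fact rather than proving everything from scratch.
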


Now let  $\sigma\in \widehat M$, and denote by $W_\sigma$ the isotropy group of $\sigma$ in the Weyl group $W= N_K(M)/M$.  Associate to $\sigma$ its infinitesimal form  
\[
\sigma\colon \lie{m} \longrightarrow  i \R ,
\]
 and then extend this to a linear functional 
\begin{equation}
\label{eq-extended-sigma}
\sigma\colon \lie{k} \longrightarrow i \R
\end{equation}
by writing $\lie{k} = \lie{m} \oplus \lie{m}^ \perp$ using any $K$-invariant inner product, and defining $\sigma$ to be zero on $\lie{m}^\perp$.  
We may then identify $W_\sigma$ with the Weyl group of the isotropy group $K_\sigma$ for \eqref{eq-extended-sigma}. It follows, in  particular, that $W_\sigma$ is itself a Weyl group. Moreover since $G$ is a complex group, we have that 
\[
\lie{s} = i\cdot \lie{k} \quad \text{and}\quad 
\lie{a} = i \cdot \lie{m}
\] 
So we can divide $\lie{a}^*$ into Weyl chambers for the action of $W_\sigma$, and choose one \emph{positive Weyl chamber} 
\begin{equation}
\label{eq-positive-sigma-weyl-chamber}
\lie{a}^*_{+,\sigma}\subseteq \lie{a}^*
\end{equation}
which is a fundamental domain for the action of $W_\sigma$ on $\lie{a}^*$.

The  morphisms $\pi_\sigma$ in Theorem~\ref{thm-reductive-riemann-lebesgue} determine morphisms
\[
  \pi_\sigma \colon C_r^*(G)  \longrightarrow   C_0  (\mathfrak{a}^*_{+,\sigma},\compop(L^2(K)^\sigma)  )
\]
 by restriction to the positive Weyl chamber. These assemble into a $C^*$-algebra isomorphism, as follows:

\begin{theorem}
\label{thm-structure-reductive-reduced-algebra}
Let $G$ be  a complex reductive group.  The representations of $G$ in the unitary principal series induce a $C^*$-algebra isomorphism
\[
\oplus_{\sigma\in\widehat{M}_{+}} \pi_\sigma \colon C_r^*(G)\stackrel \cong \longrightarrow  \bigoplus_{\sigma\in \widehat M_{+}} C_0\bigl (\mathfrak{a}^*_{+,\sigma},\compop(L^2(K)^\sigma)\bigr ) ,
\]
where $\widehat{M}_+ \subseteq \widehat{M}$ is a positive Weyl chamber for the action of $W$.
\end{theorem}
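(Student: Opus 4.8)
The plan is to show that the map $\mathcal{F}=\oplus_{\sigma\in\widehat{M}_+}\pi_\sigma$ in the statement is a well-defined injective $*$-homomorphism, and then that it is surjective; since the image of a $*$-homomorphism of $C^*$-algebras is closed, the second step amounts to proving that the image is dense, which I would extract from a noncommutative Stone--Weierstrass argument. That $\mathcal{F}$ takes values in the stated algebra requires almost nothing new: for each single $\sigma$, Theorem~\ref{thm-reductive-riemann-lebesgue} already places $\pi_\sigma(a)$ in $C_0(\mathfrak{a}^*,\compop(L^2(K)^\sigma))$ for every $a\in C^*_r(G)$, and restriction to the closed Weyl chamber $\mathfrak{a}^*_{+,\sigma}$ keeps us in $C_0(\mathfrak{a}^*_{+,\sigma},\compop(L^2(K)^\sigma))$. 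The one genuine point is that $\mathcal{F}(a)$ lands in the $c_0$-direct sum, i.e. that $\|\pi_\sigma(a)\|\to 0$ as $\sigma\to\infty$ in $\widehat{M}_+$; this is a Riemann--Lebesgue estimate in the discrete parameter, which for $a$ in the Harish-Chandra Schwartz algebra follows from the rapid decay built into the Plancherel/Fourier isomorphism and then passes to all $a\in C^*_r(G)$ by density.

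For injectivity I would first dispose of the bookkeeping with fundamental domains. Because $\widehat{M}_+$ is a strict fundamental domain for $W$ on $\widehat{M}$, and because the closed Weyl chamber $\mathfrak{a}^*_{+,\sigma}$ is a strict fundamental domain for the finite reflection group $W_\sigma$ acting on $\mathfrak{a}^*$, each unitary principal series representation is equivalent to exactly one $\pi_{\sigma,\nu}$ with $\sigma\in\widehat{M}_+$ and $\nu\in\mathfrak{a}^*_{+,\sigma}$: given $(\sigma,\nu)$ one moves $\sigma$ into $\widehat{M}_+$ by some $w\in W$ and then moves $w\nu$ into the appropriate chamber by an element of $W_{w\sigma}$, which does not disturb $w\sigma$. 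By the classification theorem for the unitary principal series stated above, this family is then a complete and irredundant list of the irreducible $*$-representations of $C^*_r(G)$. The kernels of all irreducible representations of a $C^*$-algebra intersect in $(0)$, and $\ker\mathcal{F}$ is exactly that intersection, so $\mathcal{F}$ is injective.

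The substance is surjectivity. Fix $\sigma\in\widehat{M}_+$ and $\nu\in\mathfrak{a}^*_{+,\sigma}$. The representation $\pi_{\sigma,\nu}$ is irreducible, by the classification theorem; its image is a closed $*$-algebra of operators on $L^2(K)^\sigma$ containing a nonzero compact operator, since the operators $\pi_\sigma(f)(\nu)$ are compact by Theorem~\ref{thm-reductive-riemann-lebesgue}; and a closed $*$-algebra of operators acting irreducibly on a Hilbert space and containing a nonzero compact operator is the full algebra of compact operators. Hence $\pi_{\sigma,\nu}\colon C^*_r(G)\to\compop(L^2(K)^\sigma)$ is onto, with quotient $\compop(L^2(K)^\sigma)$, so $\pi_{\sigma,\nu}$ is determined up to equivalence by its kernel. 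Now let $B=\bigoplus_\sigma C_0(\mathfrak{a}^*_{+,\sigma},\compop(L^2(K)^\sigma))$, a liminal $C^*$-algebra with Hausdorff spectrum whose pure states are the functionals $\omega_{\sigma,\nu,\xi}\colon g\mapsto\langle g(\nu)\xi,\xi\rangle$ for $\sigma\in\widehat{M}_+$, $\nu\in\mathfrak{a}^*_{+,\sigma}$ and $\xi$ a unit vector in $L^2(K)^\sigma$. Pulling back along $\mathcal{F}$ gives $\omega_{\sigma,\nu,\xi}\circ\mathcal{F}=\langle\pi_{\sigma,\nu}(\,\cdot\,)\xi,\xi\rangle$, a pure state of $C^*_r(G)$, nonzero because $\pi_{\sigma,\nu}$ is onto $\compop(L^2(K)^\sigma)$. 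For distinct triples $(\sigma,\nu,[\xi])$ these pure states are distinct: if $(\sigma,\nu)$ differs, the $\pi_{\sigma,\nu}$ are inequivalent by the bookkeeping above, so their vector states lie in distinct equivalence classes; if only $[\xi]$ differs, the two vector states of the single irreducible $\pi_{\sigma,\nu}$ are distinct since its image is weakly dense in $\mathfrak{B}(L^2(K)^\sigma)$. Thus $\mathcal{F}(C^*_r(G))$ separates the pure states of $B$ and separates each of them from $0$, so by the Stone--Weierstrass theorem for $C^*$-algebras (unconditional here, as $B$ is liminal with Hausdorff spectrum) $\mathcal{F}(C^*_r(G))=B$. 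Together with injectivity this gives the asserted isomorphism.

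The main obstacle is the surjectivity step, and within it two inputs carry the weight. The first is the Riemann--Lebesgue decay in the discrete parameter $\sigma$, needed to know that $\mathcal{F}$ has image in the $c_0$-direct sum at all; this is really a piece of the Plancherel theory for complex $G$, and it is the reason one wants the Harish-Chandra Schwartz algebra — or, alternatively, the Hausdorffness and explicit description of the complex tempered dual, which lets one split $C^*_r(G)$ into a $c_0$-direct sum of ideals indexed by $\widehat{M}_+$ and argue one $\sigma$ at a time. The second is the application of the noncommutative Stone--Weierstrass theorem, which reduces the whole matching to the two soft facts that inequivalent unitary principal series have distinct kernels and that each of them acts with full compact operators. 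Everything else — well-definedness, the combinatorics of the fundamental domains, irreducibility and compactness of the principal series — is either immediate or is among the cited structural facts.
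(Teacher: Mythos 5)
Your argument is essentially correct, but it takes a genuinely different route from the paper. The paper's proof is two lines: it quotes from \cite{CCH16} the structure theorem $C^*_r(G)\cong\bigoplus_{\sigma\in\widehat M_+}C_0(\lie{a}^*,\compop(L^2(K)^\sigma))^{W_\sigma}$, in which the Weyl group acts by (normalized) intertwining automorphisms, and then observes that restriction to the closed chamber $\lie{a}^*_{+,\sigma}$ identifies the $W_\sigma$-invariants with $C_0(\lie{a}^*_{+,\sigma},\compop(L^2(K)^\sigma))$. You instead avoid intertwining operators altogether: injectivity from the classification of the unitary principal series plus the exhaustion of the reduced dual, surjectivity from irreducibility, compactness of the operators $\pi_{\sigma,\nu}(f)$, and a noncommutative Stone--Weierstrass theorem. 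What your route buys is independence from the construction and normalization of intertwiners; what it costs is that the two genuinely analytic inputs --- the $c_0$-decay $\|\pi_\sigma(a)\|\to 0$ as $\sigma\to\infty$, and the Stone--Weierstrass step --- are precisely the content of the \cite{CCH16} theorem the paper cites, so in the end you are re-deriving that result rather than shortening the proof. Your fundamental-domain bookkeeping and the injectivity argument are fine (note that exhaustion of the tempered dual by the principal series, i.e.\ that the principal series form a closed subset of $\widehat G$, is itself one of the cited facts).

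The one step you should not present as free is the Stone--Weierstrass invocation. The noncommutative Stone--Weierstrass problem (separation of $P(B)\cup\{0\}$ implies equality) is open in general; what is available is Glimm's theorem, which requires separating the weak-$*$ \emph{closure} of the pure states, or the Fell--Dixmier theory of $C^*$-algebras of operator fields. For $B=\bigoplus_\sigma C_0(\lie{a}^*_{+,\sigma},\compop(L^2(K)^\sigma))$ one can check that the closure of $P(B)$ in the quasi-state space consists only of scalar multiples $t\omega$ of pure states, so Glimm's theorem does apply, but this needs to be said; alternatively, and more cleanly, replace the pure-state argument by the standard lemma for liminal algebras with Hausdorff spectrum: a $C^*$-subalgebra of $C_0(X,\compop(H))$ which surjects onto $\compop(H)$ at every point and whose evaluation representations at distinct points are \emph{inequivalent} (not merely distinct) is everything --- your verification of fiberwise surjectivity and inequivalence is exactly what that lemma requires. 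As written, the parenthetical ``unconditional here'' is carrying real weight without justification; with either of these repairs the argument closes.
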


\begin{proof} 
The general considerations in \cite{CCH16} show that there is an isomorphism
\[
\oplus_{\sigma\in\widehat{M}_{+}} \pi_\sigma \colon C_r^*(G)\stackrel \cong \longrightarrow  \bigoplus_{\sigma\in \widehat M_{+}} C_0\bigl (\mathfrak{a}^*,\compop(L^2(K)^\sigma)\bigr )^{W_\sigma},
\]
where $W_\sigma$ acts by intertwining automorphisms on $C_0 (\mathfrak{a}^*,\compop(L^2(K)^\sigma) )$. The required result now follows from the fact that $\lie{a}^*_{+,\sigma}$ is a fundamental domain for the action of $W_\sigma$ on $\lie{a}^*$.
\end{proof}

\subsection{The motion group}
The Cartan motion group 
\[
G_0 = K\ltimes \lie{g}/\lie{k} \cong K\ltimes \lie{s}
\]
 is amenable, and so its tempered dual is equal to the full unitary  dual, and the  canonical morphism $C^*(G_0)\to C^*_r (G_0)$ from the full to the reduced group $C^*$-algebra is an isomorphism.  We shall use $C^*_r(G_0)$ rather than $C^*(G_0)$ in what follows simply to be consistent with our usage for the complex reductive group $G$, which is not amenable unless it is abelian.

Let $\nu \in \lie{s} ^*$.  The function $\exp(i \nu)$ is  a unitary character on the additive group $\lie{s}$, and so we may form the unitarily induced representation 
\[
\pi_\nu = \Ind _{\lie{s}}^{K\ltimes \lie{s}} exp(i \nu)
\]
 of  the motion group $G_0$.  By definition, its  Hilbert space is the completion of the space of smooth functions $\psi\colon G_0\to \C$ such that 
\[
\psi(k,X) = \psi(k)\exp(-i \nu(X)) \qquad (\forall (k,X)\in G_0)
\]
in the norm induced from the inner product 
\[
\langle \phi, \psi\rangle = \int _K \overline{\phi(k)} \psi(k)\; dk .
\]
The action of $G_0$ is by left translation.

Of  course the Hilbert space  identifies with $L^2 (K)$.  Under this identification the subgroup $K\subseteq G_0$ acts by left translation, whereas an element $X\in\lie{s}$ acts by pointwise multiplication  by the function $k\mapsto \exp(i \nu(k^{-1}\cdot X))$.

 The unitary representation $\pi_\nu$ of $G_0$ integrates to a $C^*$-algebra representation 
\[
\pi_\nu \colon C^*_r(G_0)\longrightarrow \Compacts ( L^2 (K)) ,
\]
and the Riemann-Lebesgue lemma for the ordinary  Fourier transform implies the following result: 

\begin{lemma}
\label{lem-riemann-lebesgue1}
There is a morphism $C^*$-algebras
\[
\pi \colon C^*_r(G_0) \longrightarrow C_0(\lie{s}^*,\compop(L^2(K)))
\]
such that 
\[
\pushQED{\qed} 
\pi(f)(\nu) = \pi_\nu (f) \qquad (\forall \nu \in \lie{s}^*).
\qedhere
\popQED
\]
\end{lemma}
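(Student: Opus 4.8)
The plan is to realize $\pi_\nu$ in terms of the ordinary Fourier transform on the vector group $\lie{s}$, and then invoke the classical Riemann--Lebesgue lemma. First I would fix the identification of the Hilbert space of $\pi_\nu$ with $L^2(K)$ described just above the statement: under this identification, $K\subseteq G_0$ acts by left translation, and a vector $X\in\lie{s}$ acts as pointwise multiplication by the function $k\mapsto \exp\bigl(i\nu(k^{-1}\cdot X)\bigr)$, which depends on $\nu$. For $f\in C_c^\infty(G_0)$, writing $f$ as a function $f(k,X)$ on $K\times\lie{s}$, the operator $\pi_\nu(f)$ on $L^2(K)$ is an integral operator whose kernel involves the partial Fourier transform of $f$ in the $\lie{s}$-variable evaluated at points of the $K$-orbit of $\nu$.

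The key computation is therefore to write down this kernel explicitly. Concretely, for $\phi\in L^2(K)$,
\[
\bigl(\pi_\nu(f)\phi\bigr)(k)
= \int_K \Bigl( \int_{\lie{s}} f\bigl(k_1^{-1}k,\, X\bigr)\, e^{i\nu(k^{-1}\cdot X)}\, dX \Bigr) \phi(k_1)\, dk_1 ,
\]
up to bookkeeping with the semidirect-product multiplication and Haar measure, so the kernel of $\pi_\nu(f)$ is $(k,k_1)\mapsto \widehat{f}_2\bigl(k_1^{-1}k,\, k\cdot\nu\bigr)$, where $\widehat{f}_2$ denotes the Fourier transform of $f$ in its second variable and $k\cdot\nu$ is the coadjoint action. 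Since $f$ is smooth and compactly supported, $\widehat{f}_2(k',\,\xi)$ is a Schwartz function of $\xi\in\lie{s}^*$ depending continuously (indeed smoothly) on $k'\in K$; in particular its Hilbert--Schmidt norm over $(k,k_1)\in K\times K$, which dominates the operator norm, tends to $0$ as $\|\nu\|\to\infty$ and is continuous in $\nu$. This shows $\nu\mapsto\pi_\nu(f)$ lies in $C_0(\lie{s}^*,\compop(L^2(K)))$ for $f\in C_c^\infty(G_0)$; moreover $\|\pi(f)\|_\infty=\sup_\nu\|\pi_\nu(f)\|\le\|f\|_{C^*_r(G_0)}$ since each $\pi_\nu$ is a $*$-representation. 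Hence $\pi$ extends by continuity from the dense $*$-subalgebra $C_c^\infty(G_0)$ to all of $C^*_r(G_0)$, giving the desired morphism with $\pi(f)(\nu)=\pi_\nu(f)$.

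The main obstacle is purely bookkeeping: getting the semidirect-product convolution, the left/right translation conventions, and the coadjoint action lined up correctly so that the kernel genuinely is a partial Fourier transform of $f$. Once that identification is in place, the $C_0$ conclusion is exactly the Riemann--Lebesgue lemma applied fibrewise, with continuity in $\nu$ coming from continuity of the Fourier transform on Schwartz space together with the compactness of $K$. (One could alternatively quote the general structure of $C^*_r(G_0)$ for a motion group, e.g.\ via Mackey's analysis as reviewed below, but the direct argument above is self-contained and is all that is needed here.)
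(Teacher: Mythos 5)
Your argument is correct and is exactly the route the paper intends: the lemma is stated with no written proof beyond the remark that it follows from the Riemann--Lebesgue lemma for the ordinary Fourier transform, and your kernel computation (partial Fourier transform of $f$ in the $\lie{s}$-variable evaluated along the $K$-orbit of $\nu$, with the Hilbert--Schmidt bound giving compactness, decay, and continuity in $\nu$) is the standard way to carry that out. The only point worth flagging is that ``each $\pi_\nu$ is a $*$-representation'' a priori bounds $\|\pi_\nu(f)\|$ by the \emph{full} $C^*$-norm; this suffices here because $G_0$ is amenable, so $C^*(G_0)\cong C^*_r(G_0)$, as the paper notes at the start of that subsection.
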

  
Now the  group $K$ acts on $\lie{s}^*$ by the coadjoint representation and on $L^2 (K)$ by right translation, and these actions combine to give an action of $K$ on the $C^*$-algebra $C_0(\lie{s}^*,\compop(L^2(K)))$ by $C^*$-algebra automorphisms.   The  following result describes the $C^*$-algebra $C^*_r(G_0)$ up to isomorphism:

\begin{theorem}
[See for example {\cite[Thm\,3.2]{Higson08}}]
The morphism $\pi$ in Lemma \textup{\ref{lem-riemann-lebesgue1}}  induces an isomorphism
\[
\pushQED{\qed} 
C^*_r(G_0)\stackrel \cong \longrightarrow C_0(\lie{s}^*,\compop(L^2(K)))^K.
\qedhere
\popQED
\]
\end{theorem}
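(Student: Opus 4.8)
The plan is to establish three properties of the morphism $\pi$ from Lemma~\ref{lem-riemann-lebesgue1}: that it is isometric, that its image lies in the fixed-point subalgebra $C_0(\lie{s}^*,\compop(L^2(K)))^K$, and that the image is \emph{all} of that fixed-point subalgebra. The first two are essentially formal once the explicit description of $\pi_\nu$ on $L^2(K)$ recalled above is in hand; the content is surjectivity, which I would obtain by Fourier-transforming the normal subgroup $\lie{s}$ and invoking the standard structure of crossed products by compact groups.

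\emph{Isometry.} First I would show that $\int^\oplus_{\lie{s}^*}\pi_\nu\, d\nu$, the direct integral over $\lie{s}^*\cong\widehat{\lie{s}}$ with respect to Lebesgue measure, is unitarily equivalent to the regular representation of $G_0$ on $L^2(G_0)=L^2(K\times\lie{s})$. This is induction in stages through the normal subgroup $\lie{s}$: one has $\lambda_{G_0}=\Ind_{\{e\}}^{G_0}1=\Ind_{\lie{s}}^{G_0}(\lambda_{\lie{s}})$, the Fourier transform identifies $\lambda_{\lie{s}}$ with $\int^\oplus_{\lie{s}^*}e^{i\nu}\,d\nu$, and induction commutes with direct integrals, so $\lambda_{G_0}\cong\int^\oplus_{\lie{s}^*}\Ind_{\lie{s}}^{G_0}(e^{i\nu})\,d\nu=\int^\oplus_{\lie{s}^*}\pi_\nu\,d\nu$. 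Since $C^*_r(G_0)$ is the completion in the regular representation, $\|f\|_{C^*_r(G_0)}=\operatorname{ess\,sup}_{\nu}\|\pi_\nu(f)\|$; as $\nu\mapsto\pi_\nu(f)$ is norm-continuous and vanishes at infinity by Lemma~\ref{lem-riemann-lebesgue1}, this essential supremum equals the genuine supremum and vanishes only for $f=0$. Hence $\pi$ is isometric and its image is a closed $*$-subalgebra of $C_0(\lie{s}^*,\compop(L^2(K)))$. \emph{The image is $K$-fixed.} Using that $K$ acts on $L^2(K)$ by left translation and $X\in\lie{s}$ by multiplication by $k\mapsto e^{i\nu(k^{-1}X)}$, a direct computation shows that conjugating $\pi_\nu$ by right translation $R_k$ on $L^2(K)$ produces $\pi_{k\cdot\nu}$, where $k\cdot\nu$ is the coadjoint action; equivalently $R_k\,\pi_\nu(f)\,R_k^{-1}=\pi_{k\cdot\nu}(f)$ for all $f$ and $k$. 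This says exactly that $\pi(f)$ is fixed by the combined $K$-action (coadjoint on $\lie{s}^*$, conjugation by right translation on $\compop(L^2(K))$), so $\Image(\pi)\subseteq C_0(\lie{s}^*,\compop(L^2(K)))^K$.

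\emph{Surjectivity.} Fourier-transforming the normal subgroup gives $C^*_r(G_0)\cong C^*(\lie{s})\rtimes K\cong C_0(\lie{s}^*)\rtimes K$, the crossed product being taken for the coadjoint action. On the other hand, by the standard description of crossed products by a compact group $K$ (for any $K$-$C^*$-algebra $D$ one has $D\rtimes K\cong(D\otimes\compop(L^2(K)))^K$, with $K$ acting diagonally, by the given action on $D$ and by conjugation by the regular representation on $\compop(L^2(K))$), the target $C_0(\lie{s}^*,\compop(L^2(K)))^K=(C_0(\lie{s}^*)\otimes\compop(L^2(K)))^K$ is canonically isomorphic to $C_0(\lie{s}^*)\rtimes K$. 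It then remains to check that the composite isomorphism $C^*_r(G_0)\to C_0(\lie{s}^*,\compop(L^2(K)))^K$ coincides with $\pi$, which is a matter of unwinding definitions: under $C^*_r(G_0)\cong C_0(\lie{s}^*)\rtimes K$ the representation $\pi_\nu=\Ind_{\lie{s}}^{G_0}e^{i\nu}$ corresponds to the representation of the crossed product induced from the evaluation character at $\nu$, and the crossed-product isomorphism carries that to evaluation at $\nu$. (If one prefers to avoid the citation, the same conclusion follows from a direct density argument: for $f\in C_c^\infty(K\times\lie{s})$ the operator $\pi_\nu(f)$ has an integral kernel on $K\times K$ obtained from $f$ by a partial Fourier transform in the $\lie{s}$-variable, and inverting that transform shows these kernels are dense in the smooth, compactly supported, $K$-invariant operator-valued kernels, hence dense in $C_0(\lie{s}^*,\compop(L^2(K)))^K$.)

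The main obstacle is precisely this last step. Everything else is bookkeeping with the explicit formula for $\pi_\nu$, but pinning down the image as \emph{exactly} the fixed-point algebra requires either correctly identifying the crossed-product isomorphism with $\pi$ (so that one must track the $K$-equivariance on both the base $\lie{s}^*$ and the fibre $\compop(L^2(K))$ consistently), or carrying out the Fourier/density argument with enough care that the $K$-invariance is neither lost nor double-counted.
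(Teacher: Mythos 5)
Your argument is correct, and since the paper gives no proof of this statement (it is quoted with a citation to \cite[Thm.\ 3.2]{Higson08}), the relevant comparison is with that reference, whose proof is essentially the one you give: Fourier transform on the normal subgroup $\lie{s}$ to get $C^*_r(G_0)\cong C_0(\lie{s}^*)\rtimes K$, followed by the standard identification of a crossed product by a compact group with the fixed-point algebra $(C_0(\lie{s}^*)\otimes\compop(L^2(K)))^K$. Your computation that conjugation by right translation carries $\pi_\nu$ to $\pi_{k\cdot\nu}$ correctly matches the $K$-action the paper specifies, and your flagged step---identifying the composite isomorphism with $\pi$ itself---is indeed the only place requiring care, but it goes through as you describe.
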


The representations $\pi_\nu$ are not irreducible, but using the theorem it is not difficult to obtain the following description of the  irreducible unitary representations of $G_0$, which is a simple case of a general result due to Mackey (see \cite{Mackey49} or \cite{Mackey55}).    See also \cite[Sec.\,3.1]{Higson08}.

\begin{theorem}
\label{thm-mackey-classification}
If  $\nu\in \lie{s} ^*$ and if $\tau\in \widehat K_\nu$, then the  representation 
\[
 \Ind_{K_{\nu}\ltimes \lie{s}}^{K \ltimes \lie{s}} \tau \otimes e^{i \nu}
 \]
 is irreducible.   Every irreducible unitary representation of $G_0$ is equivalent to one that is obtained in this way, and two  representations  
 \[
  \Ind_{K_{\nu'}\ltimes \lie{s}}^{K \ltimes \lie{s}} \tau' \otimes e^{i \nu'}
  \quad \text{and} \quad 
  \Ind_{K_{\nu''}\ltimes \lie{s}}^{K \ltimes \lie{s}} \tau'' \otimes e^{i \nu''}
  \]
   are unitarily equivalent if and only if the data $(\tau',\nu')$ and $(\tau'',\nu'')$ are conjugate by an element of $K$.  \qed
\end{theorem}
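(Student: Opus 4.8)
The plan is to deduce the classification from the structure isomorphism
\[
C^*_r(G_0)\;\cong\;C_0\bigl(\lie{s}^*,\compop(L^2(K))\bigr)^K
\]
of the preceding theorem, in which $K$ acts on $\lie{s}^*$ by the coadjoint action and on $L^2(K)$ by right translation. It then suffices to describe the spectrum of the fixed-point algebra $A:=C_0(\lie{s}^*,\compop(L^2(K)))^K$, and the first step is to localize over the orbit space $\lie{s}^*/K$. Since $C_0(\lie{s}^*/K)=C_0(\lie{s}^*)^K$ sits centrally in the multiplier algebra of $A$ and $C_0(\lie{s}^*/K)\cdot A$ is dense in $A$, the algebra $A$ is a $C_0(\lie{s}^*/K)$-algebra, so every irreducible representation of $A$ factors through the quotient attached to a single point of $\lie{s}^*/K$. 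As $K$ is compact, each orbit $K\nu\subseteq\lie{s}^*$ is closed, and that quotient is the restriction algebra
\[
A_\nu\;:=\;C\bigl(K/K_\nu,\ \compop(L^2(K))\bigr)^K .
\]

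The second step is to identify $A_\nu$ concretely. Evaluation at the base point of $K/K_\nu$ sends a $K$-equivariant section to its value there, which must be fixed by the right translation action of $K_\nu$, and (by equivariant extension) every such fixed element arises; hence $A_\nu\cong\compop(L^2(K))^{K_\nu}$, the compact operators commuting with the right regular representation of $K_\nu$. By Peter--Weyl and Frobenius reciprocity, $L^2(K)\cong\bigoplus_{\tau\in\widehat{K_\nu}}W_\tau\otimes V_\tau$ as a representation of $K\times K_\nu$ (left times right translation), where $V_\tau$ carries $\tau$ and $W_\tau\cong\Ind_{K_\nu}^{K}\tau$ as a representation of $K$; Schur's lemma then gives $A_\nu\cong\bigoplus_{\tau\in\widehat{K_\nu}}\compop(W_\tau)$, a $c_0$-direct sum. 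So $A_\nu$ has exactly one irreducible representation for each $\tau\in\widehat{K_\nu}$, and pulling it back through $C^*_r(G_0)\to A\to A_\nu\to\compop(W_\tau)$ yields an irreducible representation of $G_0$ on $W_\tau$ (irreducible because the composite maps onto $\compop(W_\tau)$, which acts irreducibly on $W_\tau$). I would then check that this representation is equivalent to $\Ind_{K_\nu\ltimes\lie{s}}^{K\ltimes\lie{s}}\tau\otimes e^{i\nu}$, using that the latter restricts to $K$ as $\Ind_{K_\nu}^{K}\tau=W_\tau$ and that, in the realization $\pi_\nu=\Ind_{\lie{s}}^{G_0}e^{i\nu}$ of Lemma~\ref{lem-riemann-lebesgue1}, the action of $\lie{s}$ matches the decomposition above.

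For the uniqueness clause: two orbits $K\nu'$ and $K\nu''$ produce the same quotient of $A$, hence the only possibility of equivalent representations, precisely when they coincide, i.e.\ when $\nu''=k\nu'$ for some $k\in K$; after conjugating we may take $\nu'=\nu''=\nu$, and then the two representations are equivalent if and only if $\tau'$ and $\tau''$ label the same summand of $A_\nu$, i.e.\ $\tau'\cong\tau''$ in $\widehat{K_\nu}$. Together these say exactly that $(\tau',\nu')$ and $(\tau'',\nu'')$ are conjugate under $K$. The step I expect to be the real obstacle is the one flagged in the second paragraph: identifying the abstract $\tau$-summand of $\compop(L^2(K))^{K_\nu}$ with the concrete induced representation $\Ind_{K_\nu\ltimes\lie{s}}^{K\ltimes\lie{s}}\tau\otimes e^{i\nu}$. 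Matching $K$-restrictions is not enough, since distinct little representations can induce to isomorphic representations of $K$, so one must unwind the structure isomorphism at the level of the representations $\pi_\nu$ and keep track of the $\lie{s}$-action; equivalently, one may invoke the uniqueness part of Mackey's imprimitivity theorem, whose regularity hypothesis holds here automatically because $K$ is compact.
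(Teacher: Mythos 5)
Your argument is correct and follows exactly the route the paper gestures at: the paper gives no proof, merely remarking that the classification follows from the structure isomorphism $C^*_r(G_0)\cong C_0(\lie{s}^*,\compop(L^2(K)))^K$ (and citing Mackey), and your localization over $\lie{s}^*/K$, identification of the fiber over a closed orbit with $\compop(L^2(K))^{K_\nu}\cong\bigoplus_\tau\compop(W_\tau)$, and matching with the induced representations is a sound way to carry that out. You correctly isolate the only delicate point --- identifying the abstract $\tau$-summand with $\Ind_{K_\nu\ltimes\lie{s}}^{K\ltimes\lie{s}}\tau\otimes e^{i\nu}$ rather than merely matching $K$-types --- and either of your proposed remedies (unwinding the $\lie{s}$-action in $\pi_\nu$ via induction in stages, or the imprimitivity theorem) closes it.
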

 
Now let $\lie{a}\subseteq \lie{s}$ and $M\subseteq K$ be as in the previous subsection.   Because $G$ is a complex group, we can form $ i\cdot \lie{k} \subseteq \lie{g}$, and in fact 
\[
\lie{s} = i \cdot \lie{k} ,
\]
from which it follows that $\lie{a} =i \cdot \lie {m}$, where $\lie{m}$ is the Lie algebra of $M$.   It follows from standard facts about compact groups that every element of $\lie{s}$ is conjugate by an element of $K$ to an element of $\lie{a}$, and that two elements in $\lie{a}$ are conjugate to one another by an element of $K$ if and only if they are conjugate by an element of the Weyl group 
$
W = N_K(M) / M $.

We can write 
\[
\lie{s} = \lie{a} \oplus \lie{a}^\perp
\]
using a $K$-invariant inner product on $\lie{s}$ (the orthogonal complement does not depend of the choice of inner product), and so regard $\lie{a}^*$ as a subspace of $\lie{s}^*$.  We see from Theorem~\ref{thm-mackey-classification}, therefore, that every irreducible representation of $G_0$ is equivalent to one of the form 
\[
  \Ind_{K_{\nu}\ltimes \lie{s}}^{K \ltimes \lie{s}} \tau \otimes e^{i \nu}
\]
for some $\nu\in \lie{a}^*$ and some $\tau \in \widehat K_\nu$

Finally, $K_\nu$ is a connected compact Lie group, and $M\subseteq K_\nu$ is a maximal torus.  So the irreducible representations of $K_\nu$ are parametrized by their highest weights, which are orbits in $\widehat M$ of the Weyl group 
\[
W_\nu = N_{K_\nu}(M)/M,
\]
which is also the isotropy group of $\nu\in \lie{a}^*$ for the action of $W$ on $\lie{a}^*$.  So the irreducible unitary representations of $G_0$ are parametrized by elements of the set 
\[
\Bigl (\,  \bigsqcup_{\nu \in \lie {a}^*} \widehat{M}/ W_\nu \times \{\nu\} \, \Bigr ) / W = \bigl (\,  \widehat {M} \times \lie{a}^* \, \bigr ) / W .
\]
For future use we shall introduce the following notation for irreducible representation attached to the parameter $(\sigma, \nu)$:
 
\begin{definition}
Let  $\sigma\in \widehat M$ and let $\nu \in \mathfrak{a}^*$. Let  $\tau_\sigma\in \widehat K_{\nu}$ be the irreducible representation with highest weight $[\sigma]\in \widehat{M}/ W_\nu$.  We shall write 
\[
\pi(\sigma,\nu) \in \widehat G_0
\]
for the equivalence class of the irreducible unitary representation
\[
\Ind_{K_{\nu}\ltimes \lie{s}}^{K \ltimes \lie{s}} \tau_\sigma \otimes e^{i \nu}
\]
of the motion group $G_0$.
\end{definition}

\section{Scaling automorphisms}
\label{sec-scaling-automorphism}

In this section we shall construct a one-parameter group of automorphisms
\[
\alpha_t \colon C^*_r (G) \longrightarrow C^*_r (G) .
\]
The automorphisms will be  parametrized by the multiplicative group of positive real numbers, rather than the usual additive group of real numbers, and so the group law is $\alpha_{t_1} \circ \alpha_{t_2} = \alpha _{t_1t_2}$.

\subsection{Definition of the scaling automorphisms}
Given the structure theory for $C^*_r (G)$ that was presented in the previous section, the  construction is extremely simple. 
Let $\sigma \in \widehat{M}_+$. Define, for $t>  0$, an automorphism
\[
\alpha_{\sigma,t}:C_0 \bigl ( \lie{a}^*_{+,\sigma},\compop(L^2(K)^\sigma) \bigr ) 
\longrightarrow 
C_0 \bigl ( \lie{a}^*_{+,\sigma},\compop(L^2(K)^\sigma) \bigr )
\]
by 
\begin{equation}
\label{eq-rescaling-automorphism-def1}
\alpha_{\sigma,t}(f)(\nu)=f(t^{-1}\nu).
\end{equation}
The individual one-parameter groups $\alpha_{\sigma,t}$ may be combined by direct sum into a one-parameter group of automorphisms  
\[
  \oplus _\sigma \alpha_{\sigma,t} \colon 
\bigoplus_{\sigma\in \widehat M_+} C_0 \bigl ( \lie{a}^*_{+,\sigma},\compop(L^2(K)^\sigma) \bigr ) 
\longrightarrow 
\bigoplus_{\sigma\in \widehat M_+} C_0 \bigl ( \lie{a}^*_{+,\sigma},\compop(L^2(K)^\sigma) \bigr ) ,
\]
and then we define   automorphisms $\alpha_t$ of $C^*_r (G)$ by means of the commuting diagram 
\begin{equation}
\label{eq-rescaling-automorphism-def2}
\xymatrix@C=50pt{
C^*_r(G) \ar[d]_{\oplus_\sigma \pi_\sigma}^\cong  \ar[r]^{\alpha_t} & C^*_r(G) \ar[d]^{\oplus_\sigma \pi_\sigma}_\cong 
\\
\bigoplus_{\sigma\in \widehat M_+} C_0 \bigl ( \lie{a}^*_{+,\sigma},\compop(L^2(K)^\sigma) \bigr )
\ar[r]_{\oplus_\sigma \alpha_{\sigma,t} }& 
\bigoplus_{\sigma\in \widehat M_+} C_0 \bigl ( \lie{a}^*_{+,\sigma},\compop(L^2(K)^\sigma) \bigr ) .
}
\end{equation}

\subsection{Scaling automorphisms for negative t}

As we shall soon see, the key property of the rescaling automorphism $\alpha_t$, which is immediate from its definition, is that if $\sigma{\in}\widehat{M}_{+}$ and if $\nu{ \in} \lie{a}^*_{\sigma,+}$, then 
\begin{equation}
\label{eq-key-property-of-alpha-t}
\pi_{\sigma,\nu}(\alpha_t(f)) = \pi_{\sigma, t^{-1}\nu}(f)
\end{equation}
for all $f{\in} C^*_r (G)$.
We shall want to extend this to negative $t$, and to this end we define automorphisms 
\[
\alpha_t \colon C^*_r (G) \longrightarrow C^*_r (G)  
\]
for $t{<}0$ as follows.  

First we define $\lie{a}^*_{-,\sigma}$ to be the negative of the Weyl chamber $\lie{a}^*_{+,\sigma}$.  This is simply another Weyl chamber for $W_\sigma$, and so all the constructions that we made in Section~\ref{sec-group-algebras} using $\lie{a}^*_{+,\sigma}$ can be repeated for $\lie{a}^*_{-,\sigma}$.  In particular there is an isomorphism of $C^*$-algebras 
\[
\oplus_{\sigma\in\widehat{M}_{+}} \pi_\sigma \colon C_r^*(G)\stackrel \cong \longrightarrow  \bigoplus_{\sigma\in \widehat M_{+}} C_0\bigl (\mathfrak{a}^*_{-,\sigma},\compop(L^2(K)^\sigma)\bigr ) ,
\]
We now define $\alpha_t\colon C^*_r(G)\to C^*_r (G)$ for $t{<}0$ by means of the commuting diagram
\begin{equation*}
\xymatrix@C=50pt{
C^*_r(G) \ar[d]_{\oplus_\sigma \pi_\sigma}^\cong  \ar[r]^{\alpha_t} & C^*_r(G) \ar[d]^{\oplus_\sigma \pi_\sigma}_\cong 
\\
\bigoplus_{\sigma\in \widehat M_+} C_0 \bigl ( \lie{a}^*_{-,\sigma},\compop(L^2(K)^\sigma) \bigr )
\ar[r]_{\oplus_\sigma \alpha_{\sigma,t} }& 
\bigoplus_{\sigma\in \widehat M_+} C_0 \bigl ( \lie{a}^*_{+,\sigma},\compop(L^2(K)^\sigma) \bigr ) .
}
\end{equation*}
where $\alpha_{\sigma,t}(h)(\nu) = h(t^{-1}\nu)$.  The key property \eqref{eq-rescaling-automorphism-def1} now holds for all $t{\ne}0$, for all $\nu\in \lie{a}^*_{+,\sigma}$ and all $f\in C^*_r (G)$.

\section{Limit Formula and Embedding}
\label{sec-limit-formula-and-embedding}

\subsection{Limit formula}

The main result of this section links the scaling automorphisms 
\[
\alpha_t \colon C^*_r (G)\to C^*_r (G)
\]
from \eqref{eq-rescaling-automorphism-def2} with the regular representations 
\[
\lambda _t \colon C^*_r (G_t) \to C^*_r(G)
\]
from \eqref{eq-lambda-t-isomorphism}  as follows:

\begin{theorem}
\label{thm-the-limit-exists}
If  $\{ f_t\} $ is any continuous section of the continuous field  $\{C_r^*(G_t\}$, then the limit 
\[
\lim_{t\to 0} \alpha_t ( \lambda_t ( f_t))
\]
exists in $C^*_r (G)$.
\end{theorem}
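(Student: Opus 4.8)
The plan is to reduce the statement to a concrete computation on the continuous section $\{f_t\}$, using the structure theorem (Theorem~\ref{thm-structure-reductive-reduced-algebra}) to convert everything into a question about scalar-valued functions on the Weyl chambers $\lie{a}^*_{+,\sigma}$. Since the smooth, compactly supported functions on $\deformation{G}{K}$ generate the continuous sections of the deformation field, and since the statement to be proved is about existence of a limit in a norm, it suffices by a standard density/$\varepsilon$-argument to treat the case where $\{f_t\}$ comes from a single $f\in C_c^\infty(\deformation{G}{K})$: the maps $\alpha_t$ and $\lambda_t$ are all isometric (they are $*$-isomorphisms, resp. compositions of automorphisms with $*$-isomorphisms), so if the limit exists on a dense set of sections and $\{g_t\}$ approximates $\{f_t\}$ uniformly in $t$ near $0$, then $\limsup$ and $\liminf$ of $\|\alpha_t\lambda_t(f_t)-\alpha_t\lambda_t(g_t)\|$ are small, and Cauchy-ness of $t\mapsto \alpha_t\lambda_t(f_t)$ as $t\to 0$ follows.

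So fix $f\in C_c^\infty(\deformation{G}{K})$ with restrictions $f_t$. The first substantive step is to compute $\pi_{\sigma,\nu}\bigl(\alpha_t(\lambda_t(f_t))\bigr)$ using the key property \eqref{eq-key-property-of-alpha-t}, which gives
\[
\pi_{\sigma,\nu}\bigl(\alpha_t(\lambda_t(f_t))\bigr) = \pi_{\sigma,t^{-1}\nu}(\lambda_t(f_t))
\]
for $\nu\in\lie{a}^*_{+,\sigma}$. Next I would expand the right-hand side: $\lambda_t(f_t)$ is the function $g\mapsto |t|^{-d}f(g,t)$ on $G$, so $\pi_{\sigma,t^{-1}\nu}(\lambda_t(f_t))$ is the operator on $L^2(K)^\sigma$ obtained by integrating $|t|^{-d}f(g,t)\,\pi_{\sigma,t^{-1}\nu}(g)$ over $G$. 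Now change variables $g = k\exp(t^{-1}X)\exp(t^{-1}Y)$ using the Iwasawa-coordinate description of $\deformation{G}{K}$ from Section~\ref{subsec-deformations-for-reductive-groups}: by Lemma~\ref{lem-smooth-function-on-def-space}, the function $(k,X,Y,t)\mapsto f(k\exp(t^{-1}X)\exp(t^{-1}Y))$ extends to a smooth, compactly supported function $F$ on $K\times\lie{a}\times\lie{n}\times\R$, with $F(k,X,Y,0) = f(k,[X{+}Y],0) = f_0(k,[X{+}Y])$. The Jacobian of the Iwasawa parametrization together with the $|t|^{-d}$ factor should conspire so that, after the substitution, the integral becomes $\int F(k,X,Y,t)\,\pi_{\sigma,t^{-1}\nu}\bigl(k\exp(t^{-1}X)\exp(t^{-1}Y)\bigr)\,dk\,dX\,dY$ with a $t$-independent measure (this is exactly the point of the normalization $dg_t = |t|^{-d}dg$). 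Using the explicit compact-picture formula \eqref{eq-action-of-g-in-compact-form} for $\pi_{\sigma,t^{-1}\nu}$, one sees that the operator-valued integrand depends on $t$ smoothly — the factors $e^{-(\rho+it^{-1}\nu)H(\cdots)}$ are the delicate ones, but the argument $H(\exp(-t^{-1}Y)\exp(-t^{-1}X)k^{-1}\cdot)$ rescales so that, combined with $t^{-1}\nu$, the exponent has a finite limit as $t\to 0$, namely the motion-group analogue involving $i\nu$ paired with (the $\lie{a}$-part of) $X{+}Y$ — in other words the $t\to 0$ limit of $\pi_{\sigma,t^{-1}\nu}(\lambda_t(f_t))$ is precisely $\pi_{\sigma,\nu}$ evaluated on the motion-group side, i.e.\ the Fourier-transform of $f_0$ at the motion-group parameter. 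Granting this, $\pi_\sigma(\alpha_t\lambda_t(f_t))$ converges in $C_0(\lie{a}^*_{+,\sigma},\compop(L^2(K)^\sigma))$ — uniform convergence in $\nu$ on compacta plus uniform smallness at infinity, the latter from the Riemann–Lebesgue estimate of Theorem~\ref{thm-reductive-riemann-lebesgue} applied with control in $t$ — and since this holds for every $\sigma\in\widehat M_+$ with uniform control (only finitely many $\sigma$ meet the support-related constraints in a way that matters for a given $f$, or else a $\sup_\sigma$ estimate is available), the isomorphism of Theorem~\ref{thm-structure-reductive-reduced-algebra} delivers the limit in $C^*_r(G)$.

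The main obstacle I anticipate is making the limit computation of $\pi_{\sigma,t^{-1}\nu}(\lambda_t(f_t))$ rigorous \emph{uniformly in $\nu\in\lie{a}^*_{+,\sigma}$ and uniformly in $\sigma$}: pointwise in $\nu$ it is essentially an instance of the classical fact that the group Fourier transform deforms to the motion-group Fourier transform under $\deformation{G}{K}$, but the norm on $C^*_r(G)$ is the supremum over all $\sigma$ and $\nu$, so one needs the convergence to be controlled at infinity in $\nu$ (this is where the $\rho$-shift and the oscillatory factor $e^{-i t^{-1}\nu(\cdot)}$ must be handled carefully, presumably via an integration-by-parts / stationary-phase type estimate in the $X,Y$ variables, exploiting the smoothness and compact support of $F$) and uniform across the infinitely many summands indexed by $\sigma$. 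A clean way to package this is to observe that the operator-valued function $(k,X,Y,t,\nu)\mapsto F(k,X,Y,t)\,e^{-(\rho+i\nu_t)H_t(\cdots)}[\text{shift }\kappa_t(\cdots)]$ is, after the rescaling, jointly smooth and compactly supported in $(k,X,Y,t)$ with the only $\nu$-dependence an explicit bounded oscillatory exponential, so its integral against $dk\,dX\,dY$ is continuous in $(t,\nu)$ including $t=0$ and decays in $\nu$ by repeated integration by parts — and this estimate is manifestly uniform in $\sigma$ because $\sigma$ enters only through the fixed finite-dimensional isotypical data, not through the analytic estimates. Once that uniform statement is in hand, the reduction in the first paragraph closes the argument for general continuous sections.
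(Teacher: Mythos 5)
Your overall strategy is the paper's: reduce to smooth, compactly supported sections, compute the principal-series matrix coefficients in Iwasawa coordinates, use Lemma~\ref{lem-smooth-function-on-def-space} to see that the rescaled integrand extends smoothly to $t=0$, and observe that the large parameter $t^{-1}\nu$ is absorbed by the rescaling of the $\lie{a}$-variable so that only the bounded oscillation $e^{i\nu(X)}$ survives (this is exactly Lemma~\ref{lem-matrix-coeff-formula}, and it already makes the convergence uniform in $\nu$ by dominated convergence --- no stationary-phase or Riemann--Lebesgue decay-at-infinity argument is needed, since each term lies in $C_0(\lie{a}^*_{+,\sigma},\compop(L^2(K)^\sigma))$ and a uniform limit of such elements is again one).

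The gap is in passing from this pointwise/matrix-coefficient computation to convergence in the norm of $C^*_r(G)$, which is a supremum over the infinitely many $\sigma\in\widehat M_+$ and involves operators on the infinite-dimensional spaces $L^2(K)^\sigma$. Two problems. First, for a general $f\in C_c^\infty(\mathbb{N}_GK)$ infinitely many $\sigma$ contribute nontrivially, so your first alternative (``only finitely many $\sigma$ \dots matter'') is false, and the alternative $\sup_\sigma$ estimate is asserted but not supplied. Second, your justification for operator-norm convergence at fixed $\sigma$ --- that the operator-valued integrand is jointly smooth in $(k,X,Y,t,\nu)$ --- cannot be right as stated: the operators $\pi_{\sigma,t^{-1}\nu}(g)$ are built from translations on $L^2(K)$, which are not norm-continuous in $g$, so smoothness of the scalar data does not by itself yield norm convergence of the integrated operators. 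The paper closes both holes at once by shrinking the generating family further, to sections that are smooth, compactly supported and \emph{left and right $K$-finite} (Lemma~\ref{lem-K-finite-generating-family1}): right $K$-finiteness forces $\pi_\sigma(\lambda_t(f_t))=0$ for all but finitely many $\sigma$, independently of $t$ (Lemma~\ref{lem-K-finite-generating-family2}), and two-sided $K$-finiteness confines all the operators to a single finite-dimensional subspace $L^2(K)^\sigma_S$, on which uniform convergence of matrix coefficients over an orthonormal basis does give operator-norm convergence, uniformly in $\nu$ (Lemma~\ref{lem-K-finite-generating-family3}). Your density reduction in the first paragraph applies verbatim to this smaller generating family; with that extra reduction inserted, your argument becomes the paper's proof.
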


\begin{remark} Of course we exclude the value $t{=}0$ in forming the limit.
\end{remark}

  We shall proving the theorem by carrying out an explicit computation with a suitable collection of continuous sections.
  To this end, recall that a collection $\mathcal F$ of continuous sections of the continuous field of $C^*$-algebras $\{ C^*_r (G_t)\}$ is called a \emph{generating family} if for every continuous section $s$, every $\varepsilon > 0$ and every $t_0\in \R$ there is some element $f \in \mathcal{F}$ and a neighborhood $U$ of $t_0\in \R$ such that 
  \[
  t \in U \quad \Rightarrow \quad \| f(t) - s(t) \| < \varepsilon.
  \]
  The following is an immediate consequence of the fact that $C^*$-algebra isomorphisms are isometric:
  \begin{lemma} 
  \label{lem-reduce-to-generating-family}
  If the limit in Theorem~\textup{\ref{thm-the-limit-exists}} exists for a generating family of continuous sections of $\{C^*_r (G_t)\}$, then it exists for all continuous sections of $\{ C^*_r (G_t)\}$. \qed
  \end{lemma}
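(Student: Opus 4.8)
The plan is to reduce the existence of the limit to a Cauchy condition and then transport the fiberwise approximation furnished by the generating family across the maps $\alpha_t\circ\lambda_t$, using crucially that these maps are isometric. Write $\Phi_t = \alpha_t\circ \lambda_t \colon C^*_r(G_t)\to C^*_r(G)$ for $t{\ne}0$; since $\lambda_t$ is a $*$-isomorphism and $\alpha_t$ is an automorphism, each $\Phi_t$ is an isometry. Because $C^*_r(G)$ is complete, the limit $\lim_{t\to 0}\Phi_t(f_t)$ exists if and only if the net is Cauchy as $t\to 0$, i.e.\ for every $\varepsilon>0$ there is a neighborhood $U$ of $0$ with $\|\Phi_r(f_r)-\Phi_t(f_t)\|<\varepsilon$ for all nonzero $r,t\in U$. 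I would phrase both the hypothesis and the desired conclusion in these Cauchy terms.

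Now fix an arbitrary continuous section $s$ and an $\varepsilon>0$. First I would invoke the generating-family property at $t_0{=}0$ to obtain an $f\in\mathcal F$ and a neighborhood $U_1$ of $0$ such that $\|f(t)-s(t)\|<\varepsilon/3$ for all $t\in U_1$. The essential point is that isometry of $\Phi_t$ converts this fiberwise estimate into an estimate in $C^*_r(G)$: for nonzero $t\in U_1$,
\[
\|\Phi_t(f(t))-\Phi_t(s(t))\| = \|f(t)-s(t)\| < \varepsilon/3 .
\]
Next, since by hypothesis the limit exists for the generating section $f$, the net $\{\Phi_t(f(t))\}$ is Cauchy near $0$, so there is a neighborhood $U_2$ of $0$ with $\|\Phi_r(f(r))-\Phi_t(f(t))\|<\varepsilon/3$ for all nonzero $r,t\in U_2$.

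Combining the three estimates by the triangle inequality, for all nonzero $r,t\in U_1\cap U_2$ I obtain
\[
\|\Phi_r(s(r))-\Phi_t(s(t))\| \le \|\Phi_r(s(r))-\Phi_r(f(r))\| + \|\Phi_r(f(r))-\Phi_t(f(t))\| + \|\Phi_t(f(t))-\Phi_t(s(t))\| < \varepsilon .
\]
Hence $\{\Phi_t(s(t))\}$ is Cauchy as $t\to 0$, and by completeness of $C^*_r(G)$ its limit exists, which is precisely the assertion of Theorem~\ref{thm-the-limit-exists} for the section $s$.

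There is no genuine obstacle here; this is a routine $\varepsilon/3$ argument, and the lemma is indeed immediate once the three ingredients are in place. The only point requiring attention is the order of quantifiers: the approximating element $f\in\mathcal F$ depends on $\varepsilon$ and on $s$, so the Cauchy neighborhood must be taken as $U_1\cap U_2$ \emph{after} $f$ has been selected. The reason this causes no difficulty is exactly the isometry of each $\Phi_t$, which makes the fiberwise approximation built into the definition of a generating family interchangeable with approximation after applying $\Phi_t$.
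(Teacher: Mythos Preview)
Your proof is correct and is precisely the argument the paper has in mind: the paper states the lemma with a \qed, remarking only that it ``is an immediate consequence of the fact that $C^*$-algebra isomorphisms are isometric,'' and your $\varepsilon/3$ argument via the Cauchy criterion is the natural way to spell that out.
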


Recall that if $K$ acts   continuously on a complex vector   space $W$, then a vector   $w{\in}W$ is said to be \emph{$K$-finite} if the linear span of  the orbit of $w$ under the action of $K$ is finite-dimensional and the action on this finite-dimensional space is continuous, or equivalently if $ w$   lies in the image under the natural map 
\[
\bigoplus_{\tau \in \widehat K}V_\tau \otimes _{\C}  \operatorname{Hom}_K(V_\tau, W) \longrightarrow W
\]
of the span of  finitely many summands $V_\tau \otimes _{\C}  \operatorname{Hom}_K(V_\tau, W)$ (here $V_\tau$ is the representation space for a representative of  $\tau \in \widehat K$).  We shall call the minimal set of $\tau{\in} \widehat K$ here the \emph{$K$-isotypical support} of $w {\in} W$.

   \begin{lemma} 
  \label{lem-K-finite-generating-family1}
  There exists a generating family of continuous sections for the continuous field $\{ C^*_r (G_t)\}$ consisting of smooth and compactly supported functions on $\mathbb{N}_GK$ that are $K$-finite for both the left and right translation actions of $K$ on $\mathbb{N}_GK$.
  \end{lemma}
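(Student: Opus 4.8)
The plan is to start from the generating family of all smooth, compactly supported functions on $\mathbb{N}_G K$ — which, by the discussion following the first Lemma of Section~\ref{sec-continuous-fields}, does generate the deformation field — and to approximate each such function by ones that are $K$-finite on both sides. The key point is that $K{\times}K$ acts on $\mathbb{N}_G K$ by left and right translations (this is the action induced from the two translation actions of $K$ on $G$, which preserve the submanifold $K \subseteq G$ and hence act on the deformation space), and this action is smooth and proper since $K$ is compact. Consequently $K{\times}K$ acts on $C_c^\infty(\mathbb{N}_G K)$, and also on the larger space $C_c(\mathbb{N}_G K)$ of continuous compactly supported functions, continuously for the sup-norm topology on any fixed compact set.

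First I would fix an arbitrary $f \in C_c^\infty(\mathbb{N}_G K)$, a $t_0 \in \R$, and an $\varepsilon > 0$. Choose a compact neighborhood $U$ of $t_0$; then $\{ f_t : t \in U\}$ has uniformly bounded support in the fibers, and using the continuity of the field norms one reduces the $C^*_r(G_t)$-norm estimate to an $L^1$-type or sup-norm estimate on a fixed compact subset of $\mathbb{N}_G K$ (for instance, $\|g_t\|_{C^*_r(G_t)}$ is dominated by an $L^1$-norm in a fiber, and these $L^1$-norms vary continuously by the smoothly-varying-Haar-measure property recorded in Section~\ref{subsec-deformation-spaces}, so a uniform sup-norm bound on the common compact support gives a uniform $C^*_r(G_t)$ bound). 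Then I would apply the standard mollification: average $f$ against an approximate identity on $K{\times}K$, i.e. set $f^{(\phi)} = \int_{K\times K} \phi(k_1,k_2)\, (k_1,k_2)\cdot f \; dk_1\, dk_2$ for $\phi$ a smooth bump near the identity of $K{\times}K$ whose matrix coefficients are used to truncate; more directly, since $K{\times}K$ is compact, the Peter--Weyl theorem gives that the $K{\times}K$-finite vectors are dense in $C_c(L)$ for any fixed compact set $L$ in the sup-norm, and smoothing preserves smoothness, so one obtains $K{\times}K$-finite functions $f^{(n)} \in C_c^\infty(\mathbb{N}_G K)$, supported in a fixed compact set, with $f^{(n)} \to f$ uniformly. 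A vector that is $K{\times}K$-finite is exactly one that is $K$-finite for both the left and right actions of $K$, which is what is required.

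The remaining step is bookkeeping: given $\varepsilon$ and $t_0$, pick $n$ large enough that $\|f^{(n)} - f\|_\infty$ on the common compact support is small enough to force $\|f^{(n)}_t - f_t\|_{C^*_r(G_t)} < \varepsilon$ for all $t$ in the chosen neighborhood $U$ of $t_0$, using the uniform norm comparison from the previous paragraph. Since the original family $\{f\}$ of all smooth compactly supported functions is generating, and every such $f$ is, near any $t_0$, uniformly approximated by members of the new family $\{f^{(n)}\}$, transitivity of the "generating family" relation shows the $K{\times}K$-finite functions form a generating family as well. The only mildly delicate point — and the step I would expect to require the most care — is the uniform-in-$t$ passage from sup-norm control on $\mathbb{N}_G K$ to $C^*_r(G_t)$-norm control across a neighborhood of $t_0$ (including $t_0 = 0$, where the fiber changes character); this is handled by the continuity of the field norms together with the smooth variation of Haar measures, both already established in Section~\ref{sec-continuous-fields}, but one must be slightly careful to arrange that the approximating functions all have support inside one fixed compact subset of $\mathbb{N}_G K$ so that a single uniform estimate applies on the whole neighborhood.
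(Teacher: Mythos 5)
Your proposal is correct, and its core mechanism --- smoothing $f$ on both sides over $K$ and truncating to finitely many Peter--Weyl components --- is the same one the paper uses; the genuine difference is in how the approximation error is measured. The paper observes that left and right convolution by elements of $C^*_r(K)$ define morphisms into the multiplier algebra of the $C^*$-algebra of continuous sections of $\{C^*_r(G_t)\}$ vanishing at infinity, so the estimate $\|f-\phi_1*f*\psi_1\|<\varepsilon$ is obtained directly in the $C^*$-norm of the field, uniformly over all $t\in\R$ at once, with no local analysis near $t_0$ and no compact-support bookkeeping. You instead control the error in sup-norm on a fixed compact subset of $\mathbb{N}_GK$ and convert to the fiberwise norms via $\|g_t\|_{C^*_r(G_t)}\le\|g_t\|_{L^1(G_t,dg_t)}$ together with the uniform boundedness of the $dg_t$-volumes of a fixed compact set over a compact interval of $t$ (which does follow from Lemma~\ref{lem-Haar-measure}, since in the coordinates of Section~\ref{subsec-deformations-for-reductive-groups} the density $e^{2\rho(tX)}$ is bounded on compact sets); this works, and you correctly flag it as the delicate step, but it is precisely the step the multiplier observation renders unnecessary. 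One point to make explicit in a full write-up: partial sums of the $K{\times}K$-isotypic decomposition of $f$ need not converge uniformly, so the approximants must be produced by convolving $f$ with a \emph{truncated approximate identity} on $K\times K$ (as you indicate parenthetically), not by naive truncation of $f$ itself --- which brings your argument back, in substance, to the paper's $\phi_1*f*\psi_1$.
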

  
  \begin{proof}
In the following argument we shall use  right and left convolutions of elements  $\phi\in C^\infty (K)$, or even $\phi\in L^2 (K)$, with elements  $f\in C_c^\infty (\mathbb{N}_KG)$, defined    by 
  \[
  (f*\phi)(g)=\int_K f(gk^{-1})\phi(k)\;dk
  \]
and 
   \[
  ( \phi*f)(g)=\int_K \phi(k)f(k^{-1}g)\;dk .
   \]
Both convolutions are smooth and compactly supported functions on $\mathbb{N}_GK$.  The formulas define $C^*$-algebra morphisms from $C^*_r(K)$ into the multiplier algebra of the $C^*$-algebra of continuous sections of $\{ C^*_r(G_t)\}$ that vanish at infinity.  

Given a smooth and compactly supported complex function $f$ on $\mathbb{N}_G K$ and $\varepsilon >0$, we can find smooth functions $\phi$ and $\psi$ on $K$ such that 
\[
\| f - \phi * f * \psi \| <\varepsilon ,
\]
where the norm is that of the $C^*$-algebra of continuous sections of $\{C^*_r (G_t)\}$ that vanish at infinity.  Now recall the Peter-Weyl isomorphism 
\[
C^*_r (K) \stackrel \cong \longrightarrow \bigoplus _{\tau \in \widehat K} \operatorname{End} (V_\tau).
\]
We can approximate $\phi$ and $\psi$, viewed as elements of $C^*_r (K)$, by elements $\phi_1 $ and $\psi_1$ of the $C^*$-algebra that map into the algebraic direct sum above, in such a way that 
\[
\| f - \phi _1* f * \psi_1 \| <\varepsilon  .
\]
The elements $\phi_1,\psi_1\in C^*_r (K)$ are automatically smooth functions on $K$, and the function $ \phi _1* f * \psi_1$ on $\mathbb{N}_G K$ is smooth, compactly supported and left and right $K$-finite.  The collection of all elements of this form, for all $f$ and all $\varepsilon>0$, is a generating family, as required.
\end{proof}

In the proofs of the following two lemmas we  shall use the fact that  if $f\in C_c^\infty (G)$ and $\phi\in C^\infty (K)$, then
\[
\pi_{\sigma,\nu}(f* \phi) = \pi_{\sigma,\nu}(f)\pi_{\sigma}(\phi) 
\quad \text{and} \quad 
\pi_{\sigma}(\phi* f) = \pi_{\sigma,\nu}(\phi) \pi_{\sigma,\nu}(f) ,
\]
where $\pi_{\sigma}(\phi) $ denotes the application to $\phi$ of  the representation of $C^*_r(K)$ associated to the restriction of the representation $\pi_{\sigma,\nu}$ to $K$; the latter is independent of $\nu$, and is simply the restriction of the left-regular representation of $K$ to $L^2 (K)^\sigma{\subseteq} L^2 (K)$. 

     \begin{lemma}
  \label{lem-K-finite-generating-family2}
Let $\{f_t\}$ be a  right $K$-finite   continuous section of  $\{ C^*_r (G_t)\}$. If for every $\sigma\in \widehat M_+$ the limit 
\[
\lim_{t\to 0} \pi_\sigma (\alpha_t (\lambda_t(f_t)))
\]
exists in $C_0  (\mathfrak{a}^*_{+,\sigma} , \Compacts (L^2 (K)^\sigma)  )$, then the limit 
\[
\lim_{t\to 0} \alpha_t (\lambda_t (f_t))
\]
exists in $C^*_r (G)$.
  \end{lemma}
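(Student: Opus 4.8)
The plan is to reduce the statement to a finite-dimensional check, using the structure theorem for $C^*_r(G)$. Since $C^*_r(G) \cong \bigoplus_{\sigma \in \widehat{M}_+} C_0(\lie{a}^*_{+,\sigma}, \compop(L^2(K)^\sigma))$ is an isometric isomorphism, an element of $C^*_r(G)$ is small in norm if and only if each of its $\pi_\sigma$-components is small, \emph{uniformly in $\sigma$}. So the Cauchy criterion for $\lim_{t\to 0}\alpha_t(\lambda_t(f_t))$ in $C^*_r(G)$ is equivalent to the family $\{\pi_\sigma(\alpha_t(\lambda_t(f_t)))\}_{t}$ being Cauchy in $C_0(\lie{a}^*_{+,\sigma}, \compop(L^2(K)^\sigma))$ as $t\to 0$, \emph{with the Cauchy estimate independent of $\sigma$}. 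The hypothesis gives us convergence for each individual $\sigma$; the work is to upgrade this to uniformity over all $\sigma \in \widehat{M}_+$, and the right $K$-finiteness hypothesis is exactly what makes this possible.

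First I would use the right $K$-finiteness of $\{f_t\}$. Write $f_t = f_t * \psi$ for a fixed idempotent-like $\psi \in C^\infty(K)$ whose image in $C^*_r(K) \cong \bigoplus_\tau \operatorname{End}(V_\tau)$ is supported on a fixed finite set $F \subseteq \widehat K$ of isotypes (such a $\psi$ exists by Lemma~\ref{lem-K-finite-generating-family1}, or rather by the $K$-finiteness assumption). Then, using the intertwining identity $\pi_{\sigma,\nu}(f*\psi) = \pi_{\sigma,\nu}(f)\pi_\sigma(\psi)$ recalled just before Lemma~\ref{lem-K-finite-generating-family2}, every operator $\pi_\sigma(\alpha_t(\lambda_t(f_t)))(\nu)$ factors through the finite-dimensional subspace $L^2(K)^\sigma_F \subseteq L^2(K)^\sigma$ spanned by the $\tau$-isotypical components of $L^2(K)^\sigma$ for $\tau \in F$. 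Now here is the key finiteness: by Frobenius reciprocity, $\operatorname{Hom}_K(V_\tau, L^2(K)^\sigma)$ is nonzero only when $\sigma$ occurs in $V_\tau|_M$, and for fixed $\tau$ there are only finitely many such $\sigma \in \widehat{M}_+$. Hence $\pi_\sigma(\alpha_t(\lambda_t(f_t))) = 0$ for all but finitely many $\sigma \in \widehat{M}_+$ — only those $\sigma$ appearing in the restriction to $M$ of some $\tau \in F$ contribute. Therefore the direct sum over $\sigma$ is effectively a \emph{finite} direct sum (depending only on $\{f_t\}$, not on $t$), and the norm on $C^*_r(G)$ restricted to the relevant subspace is the maximum over this finite index set.

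With the index set reduced to a finite set $S \subseteq \widehat{M}_+$, the Cauchy criterion for $\lim_{t\to 0}\alpha_t(\lambda_t(f_t))$ in $C^*_r(G)$ becomes: for each $\sigma \in S$, $\{\pi_\sigma(\alpha_t(\lambda_t(f_t)))\}_t$ is Cauchy in $C_0(\lie{a}^*_{+,\sigma}, \compop(L^2(K)^\sigma))$ as $t\to 0$ — and a finite maximum of Cauchy nets is Cauchy. But this is precisely the hypothesis of the lemma. Since $C^*_r(G)$ is a Banach space (complete), the Cauchy net $\{\alpha_t(\lambda_t(f_t))\}$ converges, which is the assertion. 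I would then close the argument by noting that the limit's image under $\oplus_\sigma \pi_\sigma$ is the direct sum of the componentwise limits, so the isomorphism identifies $\lim_{t\to 0}\alpha_t(\lambda_t(f_t))$ with $\oplus_{\sigma} \lim_{t\to 0}\pi_\sigma(\alpha_t(\lambda_t(f_t)))$.

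The step I expect to be the main obstacle — or at least the one needing the most care — is the claim that only finitely many $\sigma$ contribute, and that this finiteness is genuinely uniform in $t$. One must check that the right $K$-finiteness of $f_t$ is preserved by $\lambda_t$ and by $\alpha_t$: the isomorphism $\lambda_t$ commutes with the right $K$-action (it is induced by the group isomorphism $G_t \cong G$ fixing $K$), and $\alpha_t$ only rescales the $\lie{a}^*$-variable, leaving the $\compop(L^2(K)^\sigma)$-fibers — hence the $K$-isotypical structure — untouched; so the $K$-isotypical support of $\alpha_t(\lambda_t(f_t))$ is contained in that of $f_t$, uniformly in $t$. Granting this, the finiteness of $S$ is immediate from Peter--Weyl and Frobenius reciprocity, and everything else is the soft functional-analytic reduction sketched above.
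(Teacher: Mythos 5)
Your argument is correct and follows essentially the same route as the paper's proof: right $K$-finiteness together with Peter--Weyl/Frobenius reciprocity shows that $\pi_\sigma(\alpha_t(\lambda_t(f_t)))$ vanishes for all but finitely many $\sigma\in\widehat M_+$, with the finite set independent of $t$, after which the limit commutes with the now-finite direct sum and the isometry $\oplus_\sigma\pi_\sigma$ finishes the job. Your extra check that $\lambda_t$ and $\alpha_t$ preserve the right $K$-isotypical support is left implicit in the paper but is correct as you state it.
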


  \begin{proof}
  By the Peter-Weyl theorem, the Hilbert space $L^2(K)^\sigma$ has a $K$-isotypical decomposition 
  \[
  L^2 (K)^\sigma \cong \bigoplus _{\tau \in \widehat K} V_\tau \otimes \operatorname{Hom}_K ( V_\tau,\C_\sigma)
  \]
It follows that an irreducible representation  $\tau {\in} \widehat K $, is included in the $K$-iso\-typical decomposition of $L^2 (K)^\sigma$ if and only if $\sigma$ is a weight of $\tau$, and therefore $\tau$ is included in only finitely many of the spaces $L^2 (K)^\sigma$, as $\sigma$ ranges over $\widehat M$.  So if $f$ is right $K$-finite, then the element 
\[
\pi_\sigma(\lambda_t(f_t)) \in C_0(\lie{a}^*_{+,\sigma}, \Compacts(L^2 (K)^\sigma )
\]
is nonzero for only a finite set of $\sigma{\in}\widehat M$  that is independent of $t$. Therefore under the hypotheses of the lemma the limit 
\[
\lim_{t\to 0} \bigoplus _{\sigma} \pi_\sigma ( \alpha_t (\lambda_t(f_t)))  =
 \bigoplus _{\sigma}  \lim_{t\to 0}  \pi_\sigma(\alpha_t(\lambda_t (f_t) )  )
\]
exists: we can commute the limit and the direct sum because only finitely many summands are nonzero.  The lemma follows from  the fact that $\oplus_\sigma \pi _\sigma$ is isometric.
  \end{proof}

\begin{lemma} 
  \label{lem-K-finite-generating-family3}
Let $\{f_t\}$ be a left and right $K$-finite   continuous section of  $\{ C^*_r (G_t)\}$ and let $\sigma\in \widehat M$. If  the limit 
\[
\lim_{t\to 0} \langle \phi, \pi_{\sigma , \nu} (\alpha_t (\lambda_t(f_t)))\psi \rangle
\]
exists for every $\phi,\psi\in L^2(K)^\sigma$, uniformly in $\nu\in \lie{a}^*$, then the limit 
\[
\lim_{t\to 0} \pi_{\sigma} (\alpha_t (\lambda_t(f_t)))
\]
exists in $C_0(\lie{a}^*_{+,\sigma}, \Compacts (L^2 (K)^\sigma)) $.
\end{lemma}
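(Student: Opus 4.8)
The plan is to reduce the assertion to a statement about a \emph{fixed}, finite-dimensional space of operators, on which weak convergence of matrix coefficients is automatically norm convergence and a uniform-in-$\nu$ estimate costs nothing. First I would exploit the left and right $K$-finiteness of the section $\{f_t\}$. Since the left and right translation actions of $K$ on $\mathbb{N}_GK$ restrict, fiber by fiber, to the left and right translation actions of $K$ on $G$ under the identifications $G_t\cong G$, there is a \emph{single} finite set $S'\subseteq\widehat K$ of left $K$-types and a single finite set $S\subseteq\widehat K$ of right $K$-types bounding $f_t$ for every $t$. The isomorphism $\lambda_t$ is multiplication by the scalar $|t|^{-d}$ composed with the identification $G_t\cong G$, so it preserves these supports; and the scaling automorphism $\alpha_t$ commutes with left and right convolution by smooth functions on $K$ — this follows from \eqref{eq-key-property-of-alpha-t} together with $\pi_{\sigma,\nu}(\phi*f)=\pi_\sigma(\phi)\pi_{\sigma,\nu}(f)$, $\pi_{\sigma,\nu}(f*\phi)=\pi_{\sigma,\nu}(f)\pi_\sigma(\phi)$, and the faithfulness of $\oplus_\sigma\pi_\sigma$ — so $\alpha_t$ preserves the supports as well. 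Writing $E'=\bigoplus_{\tau\in S'}V_\tau\otimes\operatorname{Hom}_K(V_\tau,\C_\sigma)$ and $E=\bigoplus_{\tau\in S}V_\tau\otimes\operatorname{Hom}_K(V_\tau,\C_\sigma)$ inside $L^2(K)^\sigma$, both finite-dimensional, the operator $\pi_{\sigma,\nu}(\alpha_t(\lambda_t(f_t)))$ then has range in $E'$ and vanishes on $E^\perp$, for \emph{every} $t\ne 0$ and \emph{every} $\nu$; that is, all these operators lie in the single finite-dimensional subspace $P_{E'}\Compacts(L^2(K)^\sigma)P_E\cong\operatorname{Hom}(E,E')$.

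Next I would fix orthonormal bases $(e_i)$ of $E$ and $(e'_j)$ of $E'$. On the finite-dimensional space $\operatorname{Hom}(E,E')$ all norms are equivalent, so there is a constant $C$ — depending only on $\sigma$, $S$, $S'$ — with $\|T\|\le C\sum_{i,j}|\langle e'_j,Te_i\rangle|$ for all such $T$. Applying this with $T=\pi_{\sigma,\nu}(\alpha_t(\lambda_t(f_t)))-\pi_{\sigma,\nu}(\alpha_{t'}(\lambda_{t'}(f_{t'})))$ and invoking the hypothesis — that each matrix coefficient $\langle e'_j,\pi_{\sigma,\nu}(\alpha_t(\lambda_t(f_t)))e_i\rangle$ converges as $t\to 0$, uniformly in $\nu$ — shows that $t\mapsto\pi_\sigma(\alpha_t(\lambda_t(f_t)))$ is uniformly Cauchy in $\nu$ as $t\to 0$, hence Cauchy in the supremum norm. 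Since each of these functions of $\nu$ lies in $C_0(\lie{a}^*_{+,\sigma},\Compacts(L^2(K)^\sigma))$ by Theorem~\ref{thm-reductive-riemann-lebesgue}, and $C_0$ is norm-closed among bounded continuous functions, the limit $\lim_{t\to 0}\pi_\sigma(\alpha_t(\lambda_t(f_t)))$ exists in $C_0(\lie{a}^*_{+,\sigma},\Compacts(L^2(K)^\sigma))$, as claimed.

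The step I expect to be the main obstacle — really the only point requiring care — is the first one: that left and right $K$-finiteness of the \emph{section} $\{f_t\}$ forces a $t$-\emph{independent} bound on the left and right $K$-types. For the sections produced by the generating family of Lemma~\ref{lem-K-finite-generating-family1} this is transparent, as those are of the form $\phi_1*f*\psi_1$ with $\phi_1,\psi_1$ ranging over a fixed finite Peter-Weyl block of $C^*_r(K)$ and the convolutions formed at the level of the section; in general it amounts to the statement that the $C^*_r(K)$-bimodule generated by $\{f_t\}$ inside the section algebra is supported on a fixed finite set of $K$-types. Once that is in hand, everything else is routine finite-dimensional linear algebra together with the elementary fact that a uniform limit of $C_0$-functions is again a $C_0$-function.
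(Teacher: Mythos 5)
Your proposal is correct and follows essentially the same route as the paper: reduce via the left and right $K$-isotypical supports to a single finite-dimensional space of operators (the paper's $L^2(K)^\sigma_S$, your $\operatorname{Hom}(E,E')$), where uniform-in-$\nu$ convergence of matrix coefficients over a finite orthonormal basis yields norm convergence, and then use that a uniform limit of $C_0$-valued functions is $C_0$. The paper simply takes for granted the point you flag as the main obstacle — that $K$-finiteness of the section gives a $t$-independent finite support — and states the rest more tersely; your added detail (equivalence of norms, the Cauchy argument, $\alpha_t$ preserving $K$-types) fills in the same argument rather than replacing it.
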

  
 \begin{proof} 
Let $S\subseteq \widehat K$ be the union of the $K$-isotypical supports of $\{f_t\}$ for the left and right translation actions, and let 
\[
L^2(K)^\sigma_S = \operatorname{Image} \Bigl ( \bigoplus_{\tau\in S}  V_\tau {\otimes} \operatorname{Hom}_K (V_\tau, L^2 (K)^\sigma)\longrightarrow  L^2 (K)^\sigma)\Bigr ) .
\]
This is a finite-dimensional subspace of $L^2 (K)^\sigma$, and the operators  $  \pi_{\sigma , \nu}(f_t) $ vanish on its orthogonal complement for all $\nu$ and all $t$.  If for any given $\nu\in \lie{a}^*$ the limits 
\begin{equation}
\label{eq-matrix-coefficient-limits}
\lim_{t\to 0} \langle \phi, \pi_{\sigma , \nu} (\alpha_t (\lambda_t(f_t)))\psi \rangle
\end{equation}
exist for all $\phi, \psi\in L^2(K)^\sigma_S$, then the limit 
\[
\lim_{t\to 0} \pi_{\sigma,\nu} (\alpha_t (\lambda_t(f_t)))
\]
exists in $\Compacts (L^2 (K)^\sigma)$.  If the limits \eqref{eq-matrix-coefficient-limits} exist uniformly in $\nu$ as $\phi$ and $\psi$ range over an orthonormal basis for $\phi, \psi\in L^2(K)^\sigma_S$, then the limit 
\[
\lim_{t\to 0} \pi_{\sigma} (\alpha_t (\lambda_t(f_t)))
\]
 exists in $C_0(\lie{a}^*_{+,\sigma}, \Compacts (L^2 (K)^\sigma))$, as required.
 \end{proof}
  
We shall use the following explicit formula for the Haar integral on $G_t$  (recall here  that  for $t\neq 0$ the Haar measure on $G_t$ is $|t|^{-d} dg$).
 
\begin{lemma}[See {\cite[Prop\,8.43]{Knapp02}}]
\label{lem-Haar-measure}
If $G=KAN$ is an Iwasawa decomposition, then the Haar measures on $K$, $\lie{a}$, and $\lie{n}$ can be normalized so that 
\[
\begin{aligned}
\quad \qquad \int _{G_t} f(g)\; dg
	& =\int_K \int_{\lie{a}}\int_{\lie{n}}  f(k\exp(X)\exp(Y)e^{2\rho(X)} |t|^{-d}dk\;dX\;dY 
	\\
	& =\int_K \int_{\lie{a}}\int_{\lie{n}}  f(k\exp(tX)\exp(tY)e^{2\rho(tX)} dk\;dX\;dY 	.\qquad \quad \qed
\end{aligned}
\]
\end{lemma}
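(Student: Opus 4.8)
The plan is to deduce both formulas from the classical Iwasawa integration formula, which is essentially the content of the cited reference. First I would recall that the multiplication map $K\times A\times N\to G$, $(k,a,n)\mapsto kan$, is a diffeomorphism, and that with suitably normalized Haar measures $dk$, $da$, $dn$ on $K$, $A$ and $N$ one has
\[
\int_G f(g)\,dg = \int_K\int_A\int_N f(kan)\,e^{2\rho(\log a)}\,dk\,da\,dn ,
\]
where $\rho\in\lie{a}^*$ is the half-sum of the positive restricted roots determined by $N$; this is precisely \cite[Prop.\,8.43]{Knapp02}. Since $A=\exp[\lie{a}]$ is a vector group and $N$ is a simply connected nilpotent group, the exponential maps $\lie{a}\to A$ and $\lie{n}\to N$ are diffeomorphisms carrying Lebesgue measure to Haar measure up to a positive constant, and I would absorb that constant into the normalization of $da$ and $dn$. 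Writing a general element of $G$ as $g=k\exp(X)\exp(Y)$ with $X\in\lie{a}$ and $Y\in\lie{n}$, the Iwasawa formula then reads
\[
\int_G f(g)\,dg = \int_K\int_{\lie{a}}\int_{\lie{n}} f\bigl(k\exp(X)\exp(Y)\bigr)\,e^{2\rho(X)}\,dk\,dX\,dY .
\]

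The first displayed identity in the lemma is now immediate: for $t\ne 0$ the Haar measure on $G_t$ is by definition $|t|^{-d}\,dg$, so $\int_{G_t}f(g)\,dg = |t|^{-d}\int_G f(g)\,dg$, and substituting the formula above gives the first line. To obtain the second line I would change variables $X\mapsto tX$ and $Y\mapsto tY$ in the integrals over $\lie{a}$ and $\lie{n}$ (permissible because $t\ne 0$). This turns $e^{2\rho(X)}$ into $e^{2\rho(tX)}$ and $f(k\exp(X)\exp(Y))$ into $f(k\exp(tX)\exp(tY))$, and produces a Jacobian factor $|t|^{\dim\lie{a}}\cdot|t|^{\dim\lie{n}} = |t|^{\dim\lie{a}+\dim\lie{n}}$. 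Since the product map $K\times A\times N\to G$ is a diffeomorphism we have $\dim G=\dim K+\dim\lie{a}+\dim\lie{n}$, so that $\dim\lie{a}+\dim\lie{n}=\dim(G/K)=d$; hence this Jacobian is exactly $|t|^{d}$, which cancels the factor $|t|^{-d}$ from the first line and leaves precisely the second line.

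There is no real obstacle here beyond bookkeeping: the single substantive input is the cited Iwasawa integration formula, and the only point that needs a moment's care is the matching of the dilation Jacobian exponent $\dim\lie{a}+\dim\lie{n}$ with the measure-rescaling exponent $d=\dim(G/K)$, which is forced by the product decomposition of dimensions in $G=KAN$. One should also keep track of which normalizations are being chosen, but the statement allows the Haar measures on $K$, $\lie{a}$ and $\lie{n}$ to be normalized freely, so this causes no difficulty.
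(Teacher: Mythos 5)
Your proof is correct and is essentially the argument the paper intends: the paper states this lemma with no written proof, simply citing Knapp's Proposition 8.43, and your derivation consists of exactly that citation plus the expected bookkeeping (pulling back Haar measure on $A$ and $N$ to Lebesgue measure on $\lie{a}$ and $\lie{n}$, and the dilation $X\mapsto tX$, $Y\mapsto tY$ whose Jacobian $|t|^{\dim\lie{a}+\dim\lie{n}}=|t|^{d}$ cancels the normalization $|t|^{-d}$ of the Haar measure on $G_t$). The one step worth flagging as substantive, which you handle correctly, is the identification $\dim\lie{a}+\dim\lie{n}=\dim(G/K)=d$ coming from the Iwasawa diffeomorphism $K\times\lie{a}\times\lie{n}\cong G$.
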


 \begin{lemma}
\label{lem-matrix-coeff-formula}
Let $\sigma{\in} \widehat{M}_+$, let $\nu{\in}\lie{a}^*_{+,\sigma}$ and let $t{\ne} 0$.   If $f_t{\in} C^*_r (G_t)$ is represented by a smooth and compactly supported function on $G_t$, and if   $\phi,\psi{\in} C^\infty(K)^\sigma$, 
 then
\begin{multline*}
\bigl \langle\phi,  \pi_{\sigma,\nu}(\alpha_t (\lambda_t(f_t))) \psi\bigr \rangle
\\
= \int_{\lie{a}} \int_{\lie{n}}  
\bigl ( \phi^* {*} f_t {*} \psi \bigr ) \bigl ( \exp(-tY)\exp(-tX) \bigr ) 
    e^{ i\nu(X)}    e^{t\rho(X)}   \;  dX\;dY,
\end{multline*}
where $\phi^*(k) = \overline{\phi(k^{-1})}$.
\end{lemma}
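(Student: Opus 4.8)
The plan is to unwind the left-hand side directly: the scaling automorphism merely shifts the parameter $\nu$, and what remains is an integral over $G_t$ which, after one substitution and the Iwasawa integration formula, reassembles into the $K$-convolution $\phi^*{*}f_t{*}\psi$.

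\emph{Removing $\alpha_t$ and $\lambda_t$.} By the key property \eqref{eq-key-property-of-alpha-t}, which holds for all $t\ne 0$ and all $\nu\in\lie{a}^*_{+,\sigma}$, one has $\pi_{\sigma,\nu}\circ\alpha_t=\pi_{\sigma,t^{-1}\nu}$, so the left-hand side equals $\langle\phi,\pi_{\sigma,t^{-1}\nu}(\lambda_t(f_t))\psi\rangle$. Next, under the regular representation $\lambda_t$ of \eqref{eq-lambda-t-isomorphism} the element $\lambda_t(f_t)$ of $C^*_r(G)$ is obtained by integrating $f_t$ against left translations with respect to the Haar measure $dg_t$ of $G_t$; applying $\pi_{\sigma,t^{-1}\nu}$ and pairing with $\phi,\psi$ yields
\[
\langle\phi,\pi_{\sigma,t^{-1}\nu}(\lambda_t(f_t))\psi\rangle
=\int_K\overline{\phi(k_0)}\int_{G_t}f_t(g)\,\bigl(\pi_{\sigma,t^{-1}\nu}(g)\psi\bigr)(k_0)\;dg_t\;dk_0 .
\]
Realizing $\pi_{\sigma,\mu}$ on functions on $G$ satisfying \eqref{eq-function-in-induced-rep-space}, and writing $\widetilde\psi$ for the extension of $\psi\in C^\infty(K)^\sigma$ to $G$, we have $\bigl(\pi_{\sigma,\mu}(g)\psi\bigr)(k_0)=\widetilde\psi(g^{-1}k_0)$ (this is \eqref{eq-action-of-g-in-compact-form}). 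The crucial move is now the substitution $h=g^{-1}k_0$, which is $dg_t$-preserving because $G$ is unimodular; it turns the inner integral into $\int_{G_t}f_t(k_0h^{-1})\,\widetilde\psi(h)\;dh_t$.

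\emph{Iwasawa coordinates and the convolution.} Parametrize $h=k_1\exp(tX)\exp(tY)$ and apply the second form of Lemma~\ref{lem-Haar-measure}: $dh_t=e^{2\rho(tX)}\,dk_1\,dX\,dY$, while $h^{-1}=\exp(-tY)\exp(-tX)k_1^{-1}$, and by the covariance \eqref{eq-function-in-induced-rep-space}, $\widetilde\psi(h)=e^{-(\rho+it^{-1}\nu)(tX)}\psi(k_1)$. The half-density factor $e^{-\rho(tX)}$ cancels against the Jacobian $e^{2\rho(tX)}$, leaving $e^{\rho(tX)}=e^{t\rho(X)}$, and the rescaling of the $\lie{a}$-variable absorbs the $t^{-1}$ in front of $\nu$, producing the exponential $e^{i\nu(X)}e^{t\rho(X)}$ of the statement. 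After interchanging the (continuous, compactly supported) integrals by Fubini, the remaining $k_0,k_1$-integral
\[
\int_K\!\int_K\overline{\phi(k_0)}\,f_t\!\bigl(k_0\exp(-tY)\exp(-tX)k_1^{-1}\bigr)\,\psi(k_1)\;dk_0\,dk_1
\]
is, by the definitions of the left and right $K$-convolutions and of $\phi^*(k)=\overline{\phi(k^{-1})}$, precisely $\bigl(\phi^*{*}f_t{*}\psi\bigr)\bigl(\exp(-tY)\exp(-tX)\bigr)$, which gives the asserted identity.

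I do not expect a genuine obstacle: this is a bookkeeping lemma, and what needs care is only the normalizations. The factor $|t|^{-d}$ built into $\lambda_t$ is exactly cancelled by using the $G_t$-version of the Iwasawa integration formula rather than the $G$-version, so that no separate change of variables $X\mapsto tX$, $Y\mapsto tY$ is required; and one must track the interplay between the modular ($\rho$) shift in the Haar measure and the half-density twist in the induced representation \eqref{eq-function-in-induced-rep-space}. Convergence and the use of Fubini are harmless, since $\phi^*{*}f_t{*}\psi$ is a smooth function on $G$ with support contained in $K\cdot\operatorname{supp}(f_t)\cdot K$, hence compactly supported.
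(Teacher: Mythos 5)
Your argument is correct and follows essentially the same route as the paper: the key property \eqref{eq-key-property-of-alpha-t} removes $\alpha_t$, the substitution $h=g^{-1}k_0$ (the paper's $g:=k\gamma^{-1}$) combined with the $G_t$-form of the Iwasawa integration formula from Lemma~\ref{lem-Haar-measure} absorbs the $|t|^{-d}$ from $\lambda_t$, and the remaining double $K$-integral is recognized as $\phi^*{*}f_t{*}\psi$. The only quibble is the sign of the oscillatory factor --- your intermediate expression $e^{-(\rho+it^{-1}\nu)(tX)}$ literally yields $e^{-i\nu(X)}$ rather than the stated $e^{i\nu(X)}$ --- but the paper's own proof contains the identical sign slip between its two displayed steps, so this is a bookkeeping discrepancy inherited from the source rather than a gap in your argument.
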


\begin{proof}
It follows from the  definitions of the scaling automorphism $\alpha_t$ and the morphism $\pi_\sigma$ that 
\begin{equation*}
  \bigl \langle \phi,  \pi_{\sigma,\nu}(\alpha_t (\lambda_t(f_t)))\psi \bigr \rangle  
  	= \int_{G} f_t(g)\bigl \langle\phi,  \pi_{\sigma,t^{-1}\nu}(g)\psi  \bigr \rangle  \; |t|^{-d}dg.
  \end{equation*}
If we  insert  into this formula the definition of the $L^2$-inner product, then we obtain
\begin{multline*}
 \int_{G} f_t(g)\langle\phi, \pi_{\sigma,t^{-1}\nu}(g)\psi\rangle  \;|t|^{-d}dg
 \\
 = \int_{G} f_t(g)
		\Bigl (  \int_{K} 
		  \phi^*(k^{-1}) \bigl ( \pi_{\sigma,t^{-1}\nu}(g)\psi\bigr )(k) 
		 \; dk \Bigr ) 
		\; |t|^{-d} dg  ,
\end{multline*}
and rearranging, and making the substitution  $g:=k\gamma^{-1}$ we get 
\begin{multline}
\label{eq-matrix-coefficient-calc1}
 \int_{G} f_t(g)\langle\phi, \pi_{\sigma,t^{-1}\nu}(g)\psi\rangle  \;|t|^{-d}dg
 \\
= \int_{K}  \int_{G}   \phi^*(k^{-1}) f_t(k\gamma^{-1})
		  \bigl( \pi_{\sigma,t^{-1}\nu}(k\gamma^{-1})\psi\bigr )(k) 
		 \; dk \; |t|^{-d} d\gamma   .
\end{multline}
      Now, according to the definition \eqref{eq-action-of-g-in-compact-form} of the principal series representations,
\[
 \bigl (\pi_{\sigma,t^{-1}\nu}(k\gamma^{-1})\psi \bigr)(k)
 =
 e^{-(\rho+it^{-1}\nu)H(\gamma)} \psi(\kappa(\gamma)) .
 \]
 Inserting this into the right-hand side of  \eqref{eq-matrix-coefficient-calc1}   we obtain
 \begin{equation}
 \label{eq-matrix-coefficient-calc2}
\int_K  \int_{G}  \phi^*(k^{-1})  f_t(k\gamma ^{-1})e^{(-\rho+it^{-1}\nu)H(\gamma )} \psi(\kappa(\gamma ))  
\; |t|^{-d} d\gamma \;dk .
\end{equation}
If we use the formula for the Haar measure on $G$ given in Lemma~\ref{lem-Haar-measure}, then we obtain from \eqref{eq-matrix-coefficient-calc2} the integral 
\begin{multline*}
  \int_K \int_{\lie{a}} \int_{\lie{n}} \int_K 
 \phi^* (k_1^{-1}) f_t\bigl (k_1\exp(-tY)\exp(-tX)k_2 ^{-1}\bigr ) \\
{\tiny\times} \,\, \phi(k_2 )     e^{(-\rho+it^{-1}\nu)(tX)}e^{2\rho(tX)} \;dk_1 \; dX\;dY\;dk_2 .
\end{multline*}
This is 
\begin{equation*}
  \int_{\lie{a}} \int_{\lie{n}}  
\bigl ( \phi^* {*} f_t {*} \psi \bigr ) \bigl ( \exp(-tY)\exp(-tX) \bigr ) 
    e^{i\nu(X)}    e^{\rho(tX)}   \;  dX\;dY  ,
\end{equation*}
as required.
\end{proof}

\begin{lemma} 
\label{lem-matrix-coeff-formula2}
Let $\sigma\in \widehat{M}_+$ and let $\nu\in \lie{a}^*_{+,\sigma}$. For any smooth and compactly supported function $f$ on $\mathbb{N}_GK$, and any $\phi, \psi \in L^2(K)^\sigma$, we have
\[
\lim_{t\rightarrow 0} \, \langle \phi,  \pi_{\sigma,\nu}(\alpha_t (\lambda_t(f_t)))\psi\rangle=\int_{\lie{a}}\int_{\lie{n}} ({\phi^*}{*}f_0{*}\psi)(e,X+Y)e^{i\nu(X)}\;dX\;dY.
\]
The convergence is uniform in $\nu\in  \mathfrak{a}^*_{+,\sigma}$.
\end{lemma}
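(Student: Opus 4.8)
The plan is to pass to the limit $t\to 0$ inside the integral formula of Lemma~\ref{lem-matrix-coeff-formula}, justifying the interchange of limit and integral by dominated convergence. First I would reduce to smooth $\phi$ and $\psi$. For $t$ in a bounded neighbourhood of $0$ the matrix coefficient $\langle\phi,\pi_{\sigma,\nu}(\alpha_t(\lambda_t(f_t)))\psi\rangle$ has absolute value at most $\|\phi\|\,\|\psi\|\cdot\sup_{|t|\le 1}\|f_t\|_{C^*_r(G_t)}$, uniformly in $\nu$, since $\alpha_t$ and $\lambda_t$ are isometric and $t\mapsto\|f_t\|_{C^*_r(G_t)}$ is continuous; and since $(X,Y)\mapsto[X{+}Y]$ is a linear isomorphism $\lie{a}\times\lie{n}\to\lie{g}/\lie{k}$ while $\phi^*{*}f_0{*}\psi$ is compactly supported on $G_0$, the right-hand side of the asserted identity is bounded by a fixed multiple of $\|\phi\|\,\|\psi\|$, again uniformly in $\nu$. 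Since $C^\infty(K)^\sigma$ is dense in $L^2(K)^\sigma$, it therefore suffices to prove the identity, together with the stated uniformity in $\nu$, for $\phi,\psi\in C^\infty(K)^\sigma$.

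So fix $\phi,\psi\in C^\infty(K)^\sigma$ and set $F=\phi^*{*}f{*}\psi$. Left and right convolution by smooth functions on $K$ preserves the class of smooth, compactly supported functions on $\mathbb{N}_GK$ (cf.\ the proof of Lemma~\ref{lem-K-finite-generating-family1}), so $F$ is such a function; its restriction to $G_t$ is $\phi^*{*}f_t{*}\psi$ for $t\ne 0$, and $F_0:=\phi^*{*}f_0{*}\psi$ for $t=0$. By Lemma~\ref{lem-matrix-coeff-formula}, for $t\ne 0$,
\[
\langle\phi,\pi_{\sigma,\nu}(\alpha_t(\lambda_t(f_t)))\psi\rangle
=\int_{\lie{a}}\int_{\lie{n}}F_t\bigl(\exp(-tY)\exp(-tX)\bigr)\,e^{i\nu(X)}\,e^{t\rho(X)}\;dX\,dY .
\]
I would then make the measure-preserving change of variables $(X,Y)\mapsto(-X,-Y)$, after which the argument of $F_t$ becomes $\exp(tY)\exp(tX)$, the $\nu$-dependent exponential remains unimodular, and $e^{t\rho(X)}$ becomes $e^{-t\rho(X)}\to 1$. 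Since $A$ normalises $N$ we may write $\exp(tY)\exp(tX)=\exp(tX)\exp\bigl(t\,\Ad(\exp(-tX))Y\bigr)$, which is of the form $e\cdot\exp(tX')\exp(tY')$ with $X'=X\in\lie{a}$ and $Y'=\Ad(\exp(-tX))Y\in\lie{n}$; so under the diffeomorphism $K\times\lie{a}\times\lie{n}\times\R\cong\mathbb{N}_GK$ of Section~\ref{subsec-deformations-for-reductive-groups} (cf.\ Lemma~\ref{lem-smooth-function-on-def-space}) the point $\exp(tY)\exp(tX)\in G_t$ corresponds to coordinates $\bigl(e,X,\Ad(\exp(-tX))Y,t\bigr)$. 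Writing $\widetilde F$ for the (smooth, compactly supported) pullback of $F$ along this diffeomorphism --- so $\widetilde F(k,X,Y,0)=F_0(k,[X{+}Y])$ --- we obtain
\[
F_t\bigl(\exp(tY)\exp(tX)\bigr)=\widetilde F\bigl(e,X,\Ad(\exp(-tX))Y,t\bigr).
\]

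Now let $t\to 0$. By continuity of $\widetilde F$ and the fact that $\Ad(\exp(-tX))\to\operatorname{id}$, the right-hand side tends to $\widetilde F(e,X,Y,0)=(\phi^*{*}f_0{*}\psi)(e,X{+}Y)$, uniformly for $(X,Y)$ in a compact set. To pass the limit through the integral I would apply dominated convergence: since $\widetilde F$ is compactly supported, there are a fixed compact set $C\subseteq\lie{a}\times\lie{n}$ and a constant $M$ such that, for all $|t|\le 1$ and all $\nu$, the integrand is supported in $C$ and bounded by $M$ --- here one uses that $\Ad(\exp(-tX))Y$ stays in a fixed compact subset of $\lie{n}$ as $(X,Y)$ ranges over a compact set and $|t|\le 1$, and that the $\nu$-dependent exponential factor has modulus $1$. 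Dominated convergence then gives the asserted limit formula; and since the modulus of the difference between the left- and right-hand sides of that formula is bounded by the $\nu$-independent quantity
\[
\int_C\bigl|\,\widetilde F\bigl(e,X,\Ad(\exp(-tX))Y,t\bigr)\,e^{-t\rho(X)}-(\phi^*{*}f_0{*}\psi)(e,X{+}Y)\,\bigr|\;dX\,dY ,
\]
which tends to $0$ as $t\to 0$ by dominated convergence, the convergence is uniform in $\nu\in\lie{a}^*_{+,\sigma}$ (indeed in all of $\lie{a}^*$).

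The step I expect to be the main obstacle is the identification of the $t\to 0$ limit of $F_t(\exp(tY)\exp(tX))$ with a value of $F_0$: one must recognise, from the coordinate description of $\mathbb{N}_GK$ in Section~\ref{subsec-deformations-for-reductive-groups}, that the arc $t\mapsto\exp(tY)\exp(tX)$ converges in the deformation space to the point $(e,[X{+}Y])$ of $G_0$ determined by its velocity at $t=0$, and then carry out the attendant bookkeeping --- handling the $\Ad$-twist and the exponential factors through the change of variables, and producing the fixed compact set $C$ and the bound $M$ --- that makes the dominated convergence argument go through with the uniformity in $\nu$ intact.
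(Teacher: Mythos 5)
Your argument is correct and is essentially the paper's own proof written out in full: the paper simply asserts that the lemma ``follows immediately'' from Lemma~\ref{lem-matrix-coeff-formula} together with the coordinate description of smooth, compactly supported functions on $\mathbb{N}_GK$ in Lemma~\ref{lem-smooth-function-on-def-space}, which is exactly the dominated-convergence argument you carry out (including the observation that convolving with $\phi^*$ and $\psi$ produces such a function, and that the integrand has a fixed compact support and bound independent of $\nu$). The only point to watch is sign bookkeeping: your substitution $(X,Y)\mapsto(-X,-Y)$ also replaces $e^{i\nu(X)}$ by $e^{-i\nu(X)}$, but this discrepancy is already present between the printed statements of Lemmas~\ref{lem-matrix-coeff-formula} and~\ref{lem-matrix-coeff-formula2} and is not a defect of your approach.
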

 
  \begin{proof}
This follows immediately from Lemma~\ref{lem-matrix-coeff-formula} above and Lemma~\ref{lem-smooth-function-on-def-space}.
\end{proof}

\begin{proof}[Proof of Theorem~\ref{thm-the-limit-exists}]
According to Lemma~\ref{lem-reduce-to-generating-family}, we only need verify that the limit in the statement of the theorem exists for a generating family of continuous sections, and we shall use Lemma~\ref{lem-K-finite-generating-family1} to work with the generating family of continuous sections $\{ f_t\}$ associated to the   smooth, compactly supported, left and right $K$-finite functions on $\mathbb{N}_GK$.  Lemma~\ref{lem-matrix-coeff-formula2} shows that for every $\sigma{\in} \widehat{M}_+$ and every $\nu{\in} \lie{a}^*_{\sigma,+}$ the individual matrix coefficients of $\pi_{\sigma,\nu}(\alpha_t (\lambda_t (f)))$ converge to limits as $t{\to}0$, uniformly in $\nu$.  Lemmas~\ref{lem-K-finite-generating-family2} and \ref{lem-K-finite-generating-family3} complete the proof.\end{proof}

\subsection{Construction of an embedding morphism}
 
Let $f\in C^*_r(G_0)$. Extend $f$  in any way to a continuous section $\{f_t\}$ of $\{C^*_r(G_t)\}$ and then form the limit
\begin{equation}
\label{eq-embedding-formula}
\alpha (f)= \lim_{t\rightarrow 0} \alpha_t (\lambda_t(f_t)) 
\end{equation}
in $C^*_r(G)$.

\begin{theorem}
\label{thm-embedding}
The formula \eqref{eq-embedding-formula} defines an embedding of $C^*$-algebras 
\[
\alpha \colon C^*_r(G_0)\longrightarrow C^*_r (G).
\]
\end{theorem}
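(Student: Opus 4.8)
The plan is to verify the three things that make $\alpha$ an embedding of $C^*$-algebras: that the formula is well-defined (independent of the chosen continuous section extending $f$), that it is a $*$-homomorphism, and that it is isometric (hence injective). Well-definedness is the first point of business. If $\{f_t\}$ and $\{g_t\}$ are two continuous sections of $\{C^*_r(G_t)\}$ with $f_0 = g_0$, then $\{f_t - g_t\}$ is a continuous section vanishing at $t=0$. By Theorem~\ref{thm-the-limit-exists} the limit $\lim_{t\to 0}\alpha_t(\lambda_t(f_t-g_t))$ exists; I would argue its value is $0$. The cleanest route is to use Lemma~\ref{lem-matrix-coeff-formula2}: for a section coming from a smooth, compactly supported, left-and-right $K$-finite function $f$ on $\mathbb{N}_GK$ with $f_0 = 0$, the right-hand side $\int_{\lie{a}}\int_{\lie{n}}(\phi^**f_0*\psi)(e,X+Y)e^{i\nu(X)}\,dX\,dY$ is visibly $0$ for all $\sigma$, $\nu$, $\phi$, $\psi$, so all matrix coefficients of the limit vanish and the limit is $0$; then one passes to general sections via the density/generating-family argument already packaged in Lemmas~\ref{lem-reduce-to-generating-family} and \ref{lem-K-finite-generating-family1}, together with the fact that $\|\alpha_t(\lambda_t(\cdot))\|$ is isometric on each fiber. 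Thus $\alpha(f_0)$ depends only on $f_0$.

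Linearity is immediate since each $\alpha_t\circ\lambda_t$ is linear and limits are linear. For multiplicativity and $*$-preservation, I would use that $C^\infty_c(\mathbb{N}_GK)$ is closed under convolution and under the involution (both stated in Section~\ref{sec-continuous-fields}), so given $f_0, g_0 \in C^*_r(G_0)$ one may first reduce to the case where $f_0, g_0$ are restrictions to $G_0$ of smooth compactly supported functions $F, H$ on $\mathbb{N}_GK$ — these are dense in $C^*_r(G_0)$ since the restriction map from continuous sections is surjective onto the fiber. The convolution $F*H$ and the involution $F^*$ restrict on $G_0$ to $f_0 * g_0$ and $f_0^*$ respectively, and restrict on each $G_t$ ($t\ne 0$) to $F_t * H_t$ and $F_t^*$. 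Since $\lambda_t$ and $\alpha_t$ are $*$-homomorphisms for each fixed $t \ne 0$, we have $\alpha_t(\lambda_t((F*H)_t)) = \alpha_t(\lambda_t(F_t))\,\alpha_t(\lambda_t(H_t))$ and similarly for the involution; taking $t\to 0$ and using joint continuity of multiplication and continuity of the involution in $C^*_r(G)$ gives $\alpha(f_0*g_0) = \alpha(f_0)\alpha(g_0)$ and $\alpha(f_0^*) = \alpha(f_0)^*$. A short limiting argument then extends these identities from the dense subalgebra to all of $C^*_r(G_0)$, using that $\alpha$ is bounded (which follows from the next point, or independently from $\|\alpha(f_0)\| \le \liminf_t \|f_t\|$ for any extending section and the freedom to choose the section with $\sup_t\|f_t\|$ close to $\|f_0\|$).

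The substantive point is that $\alpha$ is isometric. One inequality, $\|\alpha(f_0)\| \le \|f_0\|$, is soft: choose a continuous section $\{f_t\}$ with $\lim_{t\to 0}\|f_t\|_{C^*_r(G_t)} = \|f_0\|$ (possible by the continuous-field axioms), and then $\|\alpha(f_0)\| = \lim_t \|\alpha_t(\lambda_t(f_t))\| = \lim_t \|f_t\|_{C^*_r(G_t)} = \|f_0\|$ — wait, that already gives equality along a good section, provided the limit $\lim_t\|\alpha_t(\lambda_t(f_t))\|$ genuinely equals $\|\lim_t\alpha_t(\lambda_t(f_t))\|$, which holds because norm is continuous. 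So in fact the main work is to produce, for each $f_0$, a continuous section attaining the fiber norm in the limit, which is exactly axiom (ii) of Dixmier's definition of a continuous field (lower semicontinuity of $t\mapsto\|f_t\|$ is automatic, and the continuous sections form a $C^*$-algebra whose fiber at $0$ is all of $C^*_r(G_0)$, so the norm is actually continuous at $0$ along suitable sections). I expect this isometry step — specifically, pinning down that $\|\alpha(f_0)\|$ does not drop below $\|f_0\|$ — to be the main obstacle, and the honest way to handle it is: $\alpha$ is a $*$-homomorphism between $C^*$-algebras, hence automatically norm-decreasing, so injectivity is what remains, and injectivity follows because $C^*$-homomorphisms are automatically isometric once injective; to see injectivity one shows that if $\alpha(f_0) = 0$ then all matrix coefficients $\int_{\lie{a}}\int_{\lie{n}}(\phi^**f_0*\psi)(e,X+Y)e^{i\nu(X)}\,dX\,dY$ vanish, and by Fourier inversion on $\lie{n}$ and then $\lie{a}$ (the functions are Schwartz-class after the $K$-finite reduction) this forces $\phi^**f_0*\psi = 0$ on $G_0$ for all $K$-finite $\phi,\psi$, hence $\pi_\nu(f_0) = 0$ for all $\nu\in\lie{s}^*$, hence $f_0 = 0$ by the faithfulness of $\bigoplus_\nu \pi_\nu$ recorded in the structure theorem for $C^*_r(G_0)$.
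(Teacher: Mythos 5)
Your proposal is correct, and at its core it is the paper's proof: $\alpha_t\circ\lambda_t$ is isometric for each $t\ne 0$, $t\mapsto\|f_t\|_{C^*_r(G_t)}$ is continuous for every continuous section (this is an axiom of a continuous field, not something to be arranged by a clever choice of section), and the norm on $C^*_r(G)$ is continuous, so $\|\alpha(f_0)\|=\lim_t\|\alpha_t(\lambda_t(f_t))\|=\lim_t\|f_t\|=\|f_0\|$; applying the same computation to $f_t-f_t'$ gives well-definedness. The moment you wrote ``wait, that already gives equality'' you were done, and the paper's proof consists of essentially those two displayed computations. The rest of your write-up is detours. Two are harmless but longer than needed: well-definedness does not require matrix coefficients or generating families, since $\|\alpha_t(\lambda_t(f_t-g_t))\|=\|f_t-g_t\|\to\|f_0-g_0\|=0$ directly; and multiplicativity does not require reducing to smooth functions, since the continuous sections form a $*$-algebra under fiberwise operations, so $\{f_tg_t\}$ is already a continuous section extending $f_0g_0$ and one passes to the limit using joint continuity of multiplication. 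The third detour --- injectivity via Fourier inversion --- is both unnecessary and, as stated, does not work: in the formula of Lemma~\ref{lem-matrix-coeff-formula2} the oscillatory factor $e^{i\nu(X)}$ involves only the $\lie{a}$-variable, so inverting the Fourier transform in $\nu\in\lie{a}^*$ recovers only the partial integral $\int_{\lie{n}}(\phi^**f_0*\psi)(e,X+Y)\,dY$ rather than the pointwise values of $\phi^**f_0*\psi$ on $\lie{s}$; moreover the representations you actually control this way are $\pi_{\sigma,\nu}\circ\alpha\cong\Ind_{M\ltimes\lie{s}}^{K\ltimes\lie{s}}\sigma\otimes e^{i\nu}$ with $\nu\in\lie{a}^*$, not the representations $\pi_\nu$, $\nu\in\lie{s}^*$, whose joint faithfulness the structure theorem records, so an additional (true, but unproved here) separation argument would be needed. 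Strike that branch and keep the soft argument, which is the whole proof.
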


\begin{proof}
Since both $\lambda_t$ and $\alpha_t$ are isometric, 
\[
\| \lim_{t\rightarrow 0} \alpha_t (\lambda_t(f_t)) \| 
= 
\lim_{t\rightarrow 0} \| f_t \|  = \| f_0\| .
\]
Moreover if $\{ f'_t\}$ is a second extension of $f$ to a continuous section, then 
\[
\| \lim_{t\rightarrow 0}\alpha_t( \lambda_t (f_t)) -  \lim_{t\rightarrow 0}\alpha_t( \lambda_t (f'_t))\| 
=   \|  \lim_{t\rightarrow 0} \alpha_t( \lambda_t (f_t - f'_t)) \| 
= 0 .
\]
So the limit is independent of the extension, and it defines an isometric $*$-homomorphism, as required.
\end{proof}

\subsection{Mapping cone fields}  
\label{subsec-mapping-cone}
We begin with a very elementary construction: 
 
 \begin{definition}
 \label{def-mapping-cone-field}
 Let $\beta:B\rightarrow A$ be an embedding of a $C^*$-algebra $B$ into a $C^*$-algebra $A$. The \emph{mapping cone} continuous field of $C^*$-algebras over $\R$ associated to $\beta$ has fibers 
\[
\operatorname{Cone}(\beta)_{t}=\begin{cases}
A & t\neq 0\\
B & t=0.
\end{cases}
\]
Its continuous sections are all those set-theoretic sections $\{ f_t\}$ for which the function 
$$
t\mapsto  \begin{cases}
f_t & t\neq 0\\
\beta (f_0) & t=0 
\end{cases}
$$
from $\R$ to $A$ is norm-continuous. 
\end{definition}

We shall apply this construction to  the embedding  from Theorem~\ref{thm-embedding}.   

\begin{theorem} The fiber isomorphisms
\[
\begin{cases} 
\alpha_t\circ \lambda_t \colon C^*_r (G_t) \longrightarrow C^*_r (G) & t \ne 0 \\
\;\;\; \operatorname{id} \colon  C^*_r (G_0) \longrightarrow C^*_r (G_0) & t = 0
\end{cases}
\]
define an isomorphism of continuous fields
from the deformation field 
$\{C^*_r(G_t)\}$ to  the mapping cone field for the embedding 
\[
\alpha \colon C^*_r(G_0)\longrightarrow C^*_r (G).
\]
\end{theorem}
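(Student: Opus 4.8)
The strategy is to show that the stated collection of fiber isomorphisms carries continuous sections to continuous sections bijectively, which is exactly what it means to be an isomorphism of continuous fields in the sense of \cite[Prop.\;10.2.3]{Dixmier77}. So the whole proof reduces to a single assertion: a set-theoretic section $\{f_t\}$ of the deformation field is continuous if and only if the section $\{g_t\}$ of the mapping cone field defined by $g_t = \alpha_t(\lambda_t(f_t))$ for $t\ne 0$ and $g_0 = f_0$ is continuous, where by definition the latter means that the function $t\mapsto \alpha_t(\lambda_t(f_t))$ ($t\ne 0$), $t\mapsto \alpha(f_0)$ ($t=0$), is norm-continuous $\R\to C^*_r(G)$.

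First I would treat continuity away from $t=0$. For $t\ne 0$ the map $\alpha_t\circ\lambda_t\colon C^*_r(G_t)\to C^*_r(G)$ is a $*$-isomorphism; moreover, reading off the explicit formulas for $\lambda_t$ in \eqref{eq-lambda-t-isomorphism} and for $\alpha_t$ via \eqref{eq-key-property-of-alpha-t}, one sees that these isomorphisms patch together continuously for $t$ in any compact subset of $\R\setminus\{0\}$ — concretely, if $\{f_t\}$ arises from a smooth compactly supported function on $\mathbb{N}_GK$, the composite is given by a formula that visibly depends continuously on $t\ne 0$, and $\|\alpha_t(\lambda_t(f_t))\|=\|f_t\|_{C^*_r(G_t)}$, which is continuous in $t$ by \cite[Lemma 6.13]{Higson08}. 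So on $\R\setminus\{0\}$ the two notions of continuity agree, and only the point $t=0$ requires work.

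The heart of the matter is the behavior at $t=0$, and this is where Theorem~\ref{thm-the-limit-exists} and Theorem~\ref{thm-embedding} do the work. Suppose $\{f_t\}$ is a continuous section of the deformation field. By Theorem~\ref{thm-the-limit-exists} the limit $\lim_{t\to 0}\alpha_t(\lambda_t(f_t))$ exists in $C^*_r(G)$, and by the construction in Theorem~\ref{thm-embedding} (independence of the extension) this limit equals $\alpha(f_0)$; hence $\{g_t\}$ is a continuous section of the mapping cone field. Conversely, suppose $\{g_t\}$ is a continuous section of the mapping cone field. Define $f_t = \lambda_t^{-1}(\alpha_t^{-1}(g_t))$ for $t\ne 0$; I must show $\{f_t\}$ extends $f_0 := g_0$ to a continuous section of the deformation field. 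It suffices to check continuity at $t=0$ after pairing with a generating family, or more simply: since the deformation field is a genuine continuous field, $f_0$ does extend to \emph{some} continuous section $\{\tilde f_t\}$; then $\{\alpha_t(\lambda_t(\tilde f_t))\}$ is a continuous section of the mapping cone field with the same value $\alpha(f_0)=g_0$ at $t=0$, so $\{g_t\}-\{\alpha_t(\lambda_t(\tilde f_t))\}$ is a continuous section of the mapping cone field vanishing at $0$; pulling back by the fiberwise isomorphisms on $\R\setminus\{0\}$ (which we have already shown preserve continuity there, and which are isometric) shows $\{f_t - \tilde f_t\}$ is a continuous section of the deformation field over $\R\setminus\{0\}$ with norm tending to $0$ as $t\to 0$, hence extends by $0$ to a continuous section over all of $\R$. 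Therefore $\{f_t\}$ is continuous, proving surjectivity onto continuous sections. Injectivity is immediate from injectivity of each fiber map.

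The main obstacle is the continuity at $t=0$, and essentially all of that difficulty has already been absorbed into Theorem~\ref{thm-the-limit-exists} (via the explicit matrix-coefficient computations of Lemmas~\ref{lem-matrix-coeff-formula}--\ref{lem-matrix-coeff-formula2} and the $K$-finiteness reductions); given that theorem, the present statement is a soft, formal argument about continuous fields and isometric isomorphisms. The one point to be careful about is bookkeeping with the sign of $t$ — the isomorphisms $\alpha_t$ for $t<0$ were defined separately in Section~\ref{sec-scaling-automorphism} using the opposite Weyl chamber $\lie{a}^*_{-,\sigma}$ — but since the key property \eqref{eq-key-property-of-alpha-t} of $\alpha_t$ was arranged to hold for all $t\ne 0$, the limit argument and Theorem~\ref{thm-the-limit-exists} apply uniformly to one-sided limits from both sides, and they agree, so no extra case analysis is needed beyond noting this.
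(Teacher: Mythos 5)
Your proof is correct and follows essentially the same route as the paper: the substance in both cases is continuity of the image section at $t=0$, obtained from Theorem~\ref{thm-the-limit-exists} together with the independence-of-extension property in the definition of $\alpha$ (Theorem~\ref{thm-embedding}), with continuity away from $t=0$ treated as routine. The only difference is that where the paper invokes \cite[10.2.4]{Dixmier77} to reduce the claim to showing that continuous sections of the deformation field map to continuous sections of the mapping cone field, you verify the converse direction by hand via the subtraction argument; that argument is valid, but it is precisely the content of the cited result.
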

  
\begin{proof} It suffices to show that for any continuous section $\{f_t\}$ of $\{C^*_r(G_t)\}$, the image section of the mapping cone field is continuous; see  \cite[10.2.4]{Dixmier77}. But the  image section is $\{ \widehat f_t\}$, where 
\[
\widehat f_t =   
	\begin{cases} 
		\alpha_t(\lambda_t (f_t)) & t \ne 0 \\  
		f_0 & t=0 .
	\end{cases}
\]
This is obviously a continuous section of the mapping cone field away from $t{=}0$, and continuity at $t{=}0$ is proved using Theorem~\ref{thm-the-limit-exists} and the definition of $\alpha$.
\end{proof}

\section{Characterization of the Mackey bijection}
\label{sec-bijection-characterization}

The previous accounts of the Mackey bijection have all been organized around the concept  of \emph{minimal $K$-type} of an irreducible representation of $G$. Compare \cite{AfgoustidisAubert19}. In this final section we shall give a different treatment that is organized around the  embedding 
\[
\alpha \colon C^*_r (G_0) \longrightarrow C^*_r(G) ,
\]
and hence around the family of rescaling automorphisms $\{ \alpha _t \}$.

\subsection{The principal series as representations of the motion group}

\begin{lemma}
\label{lem-composition-of-pi-with-alpha}
The composition of the principal series representation $\Ind_P^G \sigma\otimes \nu$ of the connected complex reductive group $G$ with the morphism  
\[
\alpha \colon C^*_r(G_0) \longrightarrow C^*_r (G)
\]
is   the unitary representation $\Ind_{M\ltimes \lie{s}} ^{K\ltimes \lie{s}} \sigma\otimes \exp(i  \nu)$ of the motion group $G_0$.
\end{lemma}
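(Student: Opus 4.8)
The plan is to compute $\pi_{\sigma,\nu}\circ\alpha$ directly on a generating family of elements of $C^*_r(G_0)$, using the limit formula \eqref{eq-embedding-formula} together with the matrix coefficient formula of Lemma~\ref{lem-matrix-coeff-formula2}. Concretely, I would start from a smooth compactly supported function $h$ on $G_0 = K\ltimes\lie{s}$, extend it to a smooth compactly supported section $\{f_t\}$ of $\{C^*_r(G_t)\}$ via Lemma~\ref{lem-smooth-function-on-def-space}, and apply Lemma~\ref{lem-matrix-coeff-formula2}: for $\phi,\psi\in L^2(K)^\sigma$,
\[
\bigl\langle\phi,\pi_{\sigma,\nu}(\alpha(h))\psi\bigr\rangle
=\int_{\lie{a}}\int_{\lie{n}}(\phi^* * f_0 * \psi)(e,X{+}Y)\,e^{i\nu(X)}\;dX\;dY.
\]
The task is then to recognize the right-hand side as the matrix coefficient $\langle\phi,\bigl(\Ind_{M\ltimes\lie{s}}^{K\ltimes\lie{s}}\sigma\otimes e^{i\nu}\bigr)(h)\psi\rangle$ in the realization on $L^2(K)^\sigma$.

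The key step is to write down the integrated form of the induced representation $\rho_{\sigma,\nu}:=\Ind_{M\ltimes\lie{s}}^{K\ltimes\lie{s}}\sigma\otimes e^{i\nu}$ explicitly. Its Hilbert space is $L^2(K)^\sigma$ (functions on $K\ltimes\lie{s}$ transforming by $\sigma\otimes e^{i\nu}$ under $M\ltimes\lie{s}$ are determined by their restriction to $K$, which must be $\sigma$-covariant). A group element $(k_0,v)\in K\ltimes\lie{s}$ acts by $(\rho_{\sigma,\nu}(k_0,v)\psi)(k)=e^{-i\nu(\text{something})}\psi(k_0^{-1}k)$; I would pin down the cocycle using the $\lie{s}$-decomposition and the fact that $M$ centralizes $\lie{a}=i\lie{m}$, giving the exponential factor $e^{i\nu(\cdot)}$ exactly matching the $e^{i\nu(X)}$ appearing above once the variable $X\in\lie{a}\subseteq\lie{s}$ is identified with the $\lie{a}$-component and $Y\in\lie{n}$ contributes via the $\lie{s}$-projection of $\lie{n}$. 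Integrating against $h$ and using the parametrization $G_0\cong K\times\lie{s}$, with $\lie{s}$ coordinatized compatibly with $\lie{a}\oplus(\text{image of }\lie{n})$, produces precisely the double integral over $\lie{a}$ and $\lie{n}$ with the convolution $\phi^**f_0*\psi$ evaluated at $(e,X{+}Y)$. Matching the two expressions on all $\phi,\psi$ in an orthonormal basis, and for all $\nu$, yields $\pi_{\sigma,\nu}(\alpha(h))=\rho_{\sigma,\nu}(h)$ for all $h$ in a dense subalgebra of $C^*_r(G_0)$, hence $\pi_{\sigma,\nu}\circ\alpha=\rho_{\sigma,\nu}$ by continuity.

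I expect the main obstacle to be bookkeeping around the identification of the Haar/Lebesgue measures and the precise form of the cocycle for $\rho_{\sigma,\nu}$: one must check that the measure on $\lie{s}$ used in realizing $\Ind_{M\ltimes\lie{s}}^{K\ltimes\lie{s}}\sigma\otimes e^{i\nu}$ decomposes as $dX\,dY$ under $\lie{s}\cong\lie{a}\oplus(\lie{s}\cap(\lie{a}^\perp))$ in a way compatible with the identification $[X{+}Y]\in\lie{g}/\lie{k}\cong\lie{s}$ coming from Lemma~\ref{lem-smooth-function-on-def-space}, and that no residual $\rho$-shift survives (the factor $e^{\rho(tX)}$ in Lemma~\ref{lem-matrix-coeff-formula} tends to $1$, which is exactly why $\rho$ disappears in the motion-group limit). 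An alternative, cleaner route that avoids some of this is to observe that $\alpha$ is characterized by the property \eqref{eq-key-property-of-alpha-t}-style compatibility with the scaling automorphisms, namely $\pi_{\sigma,\nu}\circ\alpha=\lim_{t\to0}\pi_{\sigma,t^{-1}\nu}\circ\lambda_t$; since the groups $G_t$ for $t\ne0$ are all $G$ and the deformation contracts $G$ to $G_0$, the representations $\pi_{\sigma,t^{-1}\nu}$ of $G$, pulled back along the rescaling, limit to the corresponding representation of $G_0$, which is exactly the contraction of the principal series — and this contraction is classically known to be $\Ind_{M\ltimes\lie{s}}^{K\ltimes\lie{s}}\sigma\otimes e^{i\nu}$. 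I would present the direct computation as the primary argument, since it is self-contained given the lemmas already proved.
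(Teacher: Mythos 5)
Your proposal is correct and follows essentially the same route as the paper: both realize $\Ind_{M\ltimes \lie{s}}^{K\ltimes \lie{s}}\sigma\otimes e^{i\nu}$ on $L^2(K)^\sigma$, compute its matrix coefficients against a smooth compactly supported function by explicit changes of variables, and identify the result with the limit formula of Lemma~\ref{lem-matrix-coeff-formula2}. The paper runs the computation from the induced-representation side toward that formula while you go in the opposite direction, but the content --- including your identification of the only delicate points, namely the measure identification $\lie{a}\oplus\lie{n}\cong\lie{s}$ under $[X{+}Y]$ and the disappearance of the $\rho$-shift in the limit --- is the same.
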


\begin{proof}
The Hilbert space of the   representation $\pi=\Ind_{M\ltimes \lie{s}} ^{K\ltimes \lie{s}} \sigma\otimes \exp(i \nu) $ is the completion of the space of smooth functions $f\colon G_0 \to \C$ such that 
\[
f(g\cdot (m,X))= \sigma(m)^{-1} \exp(-i \nu (X)) f(g)
\]
in the norm associated to the inner product 
\[
\langle f_1,f_2 \rangle = \int _K \overline{ f_1(k)} f_2(k)\; dk .
\]
The action of $G_0$ is by left translation.  The Hilbert space identifies with $L^2 (K)^\sigma$ by restriction of functions to $K$, and in this realization the action of $G_0$ is 
\[
(\pi(k,X)\psi )(k_1) = \exp (i \nu(k_1^{-1}k\cdot X))\psi(k^{-1}k_1) .
\]
The matrix coefficient associated to $\phi,\psi \in L^2 (K)$ and $f\in C_c^\infty (G_0)$ is therefore 
\[
\langle \phi, \pi(f) \psi \rangle = \int _K \int_{K}\int_{\lie{s}} \overline{\phi(k_1)} f(k,X)  \exp (i \nu(k_1^{-1}k\cdot X))\phi(k^{-1}k_1) \;dk\;dk_1\; dX .
\]
Making the change of variables $k_2:=k_1^{-1}k$ we get 
\[
\langle \phi, \pi(f) \psi \rangle = \int _K \int_{\lie{s}} \int _{K}
\overline{\phi(k_1)} f(k_1k_2,X)  \exp (i \nu(k_2\cdot X))\phi (k_2^{-1}) \;dk_1 \; dX \: dk_2 ,
\]
and then the further change of variables $Z:=k_2{\cdot} X$ gives 
\[
\langle \phi, \pi(f) \psi \rangle = \int _K \int_{\lie{s}} \int _{K}
\overline{\phi(k_1)} f(k_1k_2,k_2^{-1}{\cdot} Z)  \exp (i \nu(  Z))\phi (k_2^{-1}) \;dk_1 \; dZ \: dk_2 .
\]
Now let us insert into the integral above the formula 
\[
\begin{aligned}
(\phi^*{*} f{*} \psi )(k,Z) 
	& =  \int _K \int _K \overline{\phi(k_1)} f(k_1 \cdot (k,Z)\cdot k_2)\psi(k_2^{-1}) \; dk_1 \; dk_2 \\
	& =  \int _K \int _K \overline{\phi(k_1)} f(k_1kk_2, k_2^{-1}{\cdot}Z)\ \psi(k_2^{-1}) \; dk_1 \; dk_2 .
\end{aligned}
\]
We obtain 
\[
\langle \phi, \pi(f) \psi \rangle =  
\int_{\lie{s}}  (\phi^*{*} f{*} \psi  )(e,Z) \exp (i \nu(  Z)) \; dZ.
\]
But Lemma~\ref{lem-matrix-coeff-formula2} shows that this is precisely $\langle \phi, \pi_{\sigma,\nu}{\circ} \alpha (f) \psi \rangle $,
and the proof is complete.
\end{proof}

In the following lemma and in the next subsection we shall make use of the classification of irreducible representations of the compact connected group $K_\nu$ by highest weights.  Rather than choose a dominant Weyl chamber for the action of the Weyl group $W_\nu$ on $\widehat M$, we shall   associate to a given irreducible representation $\tau$ the $W_\nu$ orbit of all possible highest weights for all possible choices of dominant Weyl chamber.  We shall use brackets, as in $[\theta]$, to denote this orbit.

The highest weight (or highest weight orbit) $[\theta]$ of an irreducible representation $\tau$ of $K_\nu$ is extremal in the following sense.  If $\sigma$ is any weight of $\tau$, then $\sigma$ lies in   the convex span of $W_\nu{\cdot} \tau$. Here, to form the convex hull, we view the free abelian group of all weights  as a lattice in a vector space via the embedding
\[
\widehat M \subseteq \widehat M\otimes_{\mathbb{Z}} \R.
\]
We shall write $[\sigma]\le [\theta]$ to denote the inclusion of $\sigma$ in the convex hull of $W_\nu {\cdot} \theta$. Of course,     this partial order depends on $\nu$, but the choice of $\nu$ will be clear from the context.

\begin{lemma}
\label{lem-decomp-into-irreps}
The composition of the principal series representation 
\[
\pi_{\sigma,\nu}=\Ind_P^G \sigma\otimes \exp(i \nu)
\]
 of the connected complex reductive group $G$ with the morphism  
\[
\alpha \colon C^*_r(G_0) \longrightarrow C^*_r (G)
\]
decomposes as a direct sum
\[
\bigoplus_{[\theta]\in \widehat M / W_\nu}
m(\sigma,\theta) \cdot  \pi (\theta, \nu) 
\]
as a representation of $G_0$, where the integer  $m(\sigma,\theta)$ is the multiplicity with which the weight $\sigma$ occurs in the representation of $K_\nu$ with highest weight $\theta$.
\end{lemma}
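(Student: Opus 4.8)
The strategy is to combine Lemma~\ref{lem-composition-of-pi-with-alpha}, which identifies $\pi_{\sigma,\nu}\circ\alpha$ with the induced representation $\Ind_{M\ltimes\lie{s}}^{K\ltimes\lie{s}}\sigma\otimes\exp(i\nu)$ of $G_0$, with Mackey's classification (Theorem~\ref{thm-mackey-classification}) via a standard induction-in-stages argument. First I would note that, since $\nu\in\lie{a}^*\subseteq\lie{s}^*$, the stabilizer $K_\nu$ sits between $M$ and $K$: indeed $M$ centralizes $\lie{a}$, hence fixes $\nu$, so $M\subseteq K_\nu\subseteq K$. Therefore $M\ltimes\lie{s}\subseteq K_\nu\ltimes\lie{s}\subseteq K\ltimes\lie{s}$, and induction in stages gives
\[
\Ind_{M\ltimes\lie{s}}^{K\ltimes\lie{s}}\sigma\otimes\exp(i\nu)
\;\cong\;
\Ind_{K_\nu\ltimes\lie{s}}^{K\ltimes\lie{s}}\Bigl(\Ind_{M\ltimes\lie{s}}^{K_\nu\ltimes\lie{s}}\sigma\otimes\exp(i\nu)\Bigr).
\]

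\textbf{The inner induction.} The key step is to analyze $\Ind_{M\ltimes\lie{s}}^{K_\nu\ltimes\lie{s}}\sigma\otimes\exp(i\nu)$ as a representation of $K_\nu\ltimes\lie{s}$. Since $\nu$ is $K_\nu$-fixed, the character $\exp(i\nu)$ extends to a character of $K_\nu\ltimes\lie{s}$ (trivial on $K_\nu$, equal to $\exp(i\nu)$ on $\lie{s}$), and tensoring with it commutes with the induction. Thus it suffices to decompose $\Ind_{M}^{K_\nu}\sigma$ as a representation of the compact connected group $K_\nu$, then tensor through by $\exp(i\nu)$. By Frobenius reciprocity, the multiplicity of the irreducible $\tau_\theta\in\widehat K_\nu$ of highest weight $[\theta]$ in $\Ind_M^{K_\nu}\sigma$ equals $\dim\operatorname{Hom}_M(\sigma,\tau_\theta\vert_M)$, which is exactly the multiplicity $m(\sigma,\theta)$ with which the weight $\sigma$ occurs in $\tau_\theta$. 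Therefore
\[
\Ind_{M\ltimes\lie{s}}^{K_\nu\ltimes\lie{s}}\sigma\otimes\exp(i\nu)
\;\cong\;
\bigoplus_{[\theta]\in\widehat M/W_\nu} m(\sigma,\theta)\cdot\bigl(\tau_\theta\otimes\exp(i\nu)\bigr).
\]

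\textbf{Reassembling.} Inducing this back up from $K_\nu\ltimes\lie{s}$ to $K\ltimes\lie{s}$ and using the additivity of induction over direct sums yields
\[
\pi_{\sigma,\nu}\circ\alpha
\;\cong\;
\bigoplus_{[\theta]\in\widehat M/W_\nu} m(\sigma,\theta)\cdot \Ind_{K_\nu\ltimes\lie{s}}^{K\ltimes\lie{s}}\tau_\theta\otimes\exp(i\nu),
\]
and the summand $\Ind_{K_\nu\ltimes\lie{s}}^{K\ltimes\lie{s}}\tau_\theta\otimes\exp(i\nu)$ is by definition the irreducible representation $\pi(\theta,\nu)\in\widehat G_0$ (note $K_{\nu}$ is the stabilizer of $\nu\in\lie{s}^*$ since $\nu\in\lie{a}^*$ and the coadjoint and adjoint actions agree under the chosen $K$-invariant inner product). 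This is the claimed decomposition. The only point that requires a little care — and I expect it to be the main technical obstacle, though a minor one — is bookkeeping with the highest-weight orbit notation $[\theta]\in\widehat M/W_\nu$: one must check that distinct $W_\nu$-orbits give inequivalent $\pi(\theta,\nu)$ (which follows from the equivalence criterion in Theorem~\ref{thm-mackey-classification}, since two such data are $K$-conjugate iff the $\theta$'s are $W_\nu$-conjugate) so that no collapsing of summands occurs, and that the sum over orbits on the right correctly reproduces Frobenius reciprocity for $\Ind_M^{K_\nu}$. Everything else is a routine application of induction in stages and Frobenius reciprocity for compact groups.
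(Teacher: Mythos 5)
Your argument is correct and follows essentially the same route as the paper: both pass through Lemma~\ref{lem-composition-of-pi-with-alpha}, apply induction in stages through $K_\nu\ltimes\lie{s}$, observe that $\lie{s}$ acts on the inner induced representation by the character $\exp(i\nu)$ because $\nu$ is $K_\nu$-fixed, and then invoke Frobenius reciprocity (Peter--Weyl) together with the highest-weight classification for $K_\nu$. Your phrasing of the middle step via the projection formula (tensoring with the extended character commutes with induction) is a harmless repackaging of the paper's direct computation of the $\lie{s}$-action on $L^2(K_\nu)^\sigma$.
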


\begin{proof}
We showed in the Lemma~\ref{lem-composition-of-pi-with-alpha} that $\pi_{\sigma,\nu}{\circ}\alpha$ is the  induced representation $\Ind_{M\ltimes{\lie{s}}}^{K\ltimes \lie{s}} \sigma{\otimes} \exp(i \nu)$.  Let us analyze this representation by induction in stages \cite[Thm.\;3.3\;p.137]{Mackey55}: 
\[
\Ind_{M\ltimes{\lie{s}}}^{K\ltimes \lie{s}}\; \sigma{\otimes} \exp(i \nu) \cong 
\Ind_{K_\nu \ltimes{\lie{s}}}^{K \ltimes \lie{s}}\; \Ind_{M\ltimes{\lie{s}}}^{K_\nu \ltimes \lie{s}}\; \sigma{\otimes} \exp(i \nu) 
\]
As in the proof of Lemma~\ref{lem-composition-of-pi-with-alpha}, we can realize $ \Ind_{M\ltimes{\lie{s}}}^{K_\nu \ltimes \lie{s}} \sigma{\otimes} \exp(i \nu) $ on the Hilbert space $L^2 (K_\nu)^\sigma$, and in this realization an element $X\in \lie{s}$ acts as multiplication by the function 
\[
k\longmapsto \exp(i \nu (k^{-1} {\cdot} X))\qquad (\forall k\in K_\nu) .
\]
But if $k\in K_\nu$ then by definition, $\nu (k^{-1} {\cdot} X) = \nu(X)$.  So the subgroup $\lie{s}$ of $ K_\nu\ltimes \lie{s}$ acts on $L^2 (K_\nu)^\sigma$ by the unitary character $\exp(i\nu)$.  It follows that 
\[
 \Ind_{M\ltimes{\lie{s}}}^{K_\nu \ltimes \lie{s}} \sigma{\otimes} \exp(i \nu) = \bigoplus _{\tau\in \widehat K_\nu} m(\tau) \;\tau{\otimes}\exp(i \nu) ,
 \]
where $m(\tau)$ is the multiplicity with which $\tau{\in} \widehat K_\nu$ occurs in $L^2 (K_\nu)^\sigma$.  By the Peter-Weyl theorem (or Frobenius reciprocity)  $m(\tau)$ is also the multiplicity with which the weight $\sigma$ occurs in $\tau$; compare the proof of Lemma~\ref{lem-K-finite-generating-family2}.  The lemma follows from this and the classification of irreducible representations of the connected group $K_\nu$ by their highest weights.
\end{proof}

\subsection{Characterization of the Mackey bijection}

\begin{theorem}
There is a unique bijection 
\[
\mu \colon \widehat G_r  \longrightarrow \widehat G_0
\]
such that for every $\iota  \in \widehat G_r$, the element $\mu(\iota )\in \widehat G_0$ may be realized as a unitary subrepresentation of $\iota \circ \alpha$.
\end{theorem}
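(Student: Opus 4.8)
The plan is to show that the desired bijection must be ``the identity'' on the common parameter space $\bigl(\widehat M \times \lie{a}^*\bigr)/W$. Recall that the tempered dual $\widehat G_r$ is identified with this set via $\iota(\sigma,\nu)$, and the unitary dual $\widehat G_0$ is identified with it via $\pi(\sigma,\nu)$. I claim the map
\[
\mu \colon \iota(\sigma,\nu) \longmapsto \pi(\sigma,\nu)
\]
is the unique bijection with the stated property. For existence, this $\mu$ is well defined and bijective because $(\sigma,\nu)$ and $(\sigma',\nu')$ determine the same element of $\widehat G_r$ exactly when they are $W$-conjugate, which is exactly when they determine the same element of $\widehat G_0$. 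That $\pi(\sigma,\nu)$ is a subrepresentation of $\iota(\sigma,\nu)\circ\alpha$ is immediate from Lemma~\ref{lem-decomp-into-irreps}: in the decomposition $\iota(\sigma,\nu)\circ\alpha \cong \bigoplus_{[\theta]} m(\sigma,\theta)\,\pi(\theta,\nu)$ the coefficient $m(\sigma,\sigma)$ is the multiplicity of the highest weight $\sigma$ in the representation of $K_\nu$ with highest weight $\sigma$, which is $1$.

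The work is in uniqueness. Let $\mu'$ be any bijection with the stated property. Lemma~\ref{lem-decomp-into-irreps} tells us that every irreducible subrepresentation of $\iota(\sigma,\nu)\circ\alpha$ has the form $\pi(\theta,\nu)$ for the \emph{same} $\nu$ and for some $[\theta]$ with $m(\sigma,\theta)\ne 0$, hence with $[\sigma]\le[\theta]$ in the $W_\nu$-order (convex hulls of Weyl orbits) recalled just before that lemma. So $\mu'$ carries the fibre of $\widehat G_r$ over a $W$-orbit $[\nu]\in\lie{a}^*/W$ into the corresponding fibre of $\widehat G_0$; since $\mu'$ is a bijection it restricts on each fibre to a bijection $\phi$ of $\widehat M/W_\nu$ with $[\sigma]\le[\phi(\sigma)]$ for every $\sigma$. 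It therefore suffices to prove: \emph{any bijection $\phi$ of $\widehat M/W_\nu$ with $[\sigma]\le[\phi(\sigma)]$ for all $\sigma$ is the identity.}

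For this, fix a $W_\nu$-invariant Euclidean inner product on $\widehat M\otimes_{\integers}\R$ and set $N(\sigma)=\|\sigma\|^2$; being $W_\nu$-invariant, $N$ descends to $\widehat M/W_\nu$. If $[\sigma]\le[\theta]$, so that a representative of $\sigma$ is a convex combination of the $W_\nu$-orbit of $\theta$, then the triangle inequality gives $N(\sigma)\le N(\theta)$, with equality only if $[\sigma]=[\theta]$ (strict convexity of the Euclidean norm, together with the fact that all points of a $W_\nu$-orbit have equal length). Since $N$ is a positive-definite quadratic form on the lattice $\widehat M$, each sublevel set $L_c=\{\,[\sigma]:N(\sigma)\le c\,\}$ is finite. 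From $[\phi^{-1}(\tau)]\le[\tau]$ we get $N(\phi^{-1}(\tau))\le N(\tau)$, so $\phi^{-1}(L_c)\subseteq L_c$; as $L_c$ is finite and $\phi^{-1}$ injective, $\phi^{-1}$, and hence $\phi$, permutes $L_c$. Summing the inequalities $N(\sigma)\le N(\phi(\sigma))$ over the finite set $L_c$, and using that $\phi$ permutes it, forces $N(\sigma)=N(\phi(\sigma))$, hence $[\sigma]=[\phi(\sigma)]$, for every $[\sigma]\in L_c$; letting $c\to\infty$ gives $\phi=\mathrm{id}$, and therefore $\mu'=\mu$.

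The main obstacle is precisely this last combinatorial step. The bare condition ``$[\sigma]\le[\phi(\sigma)]$ for a bijection $\phi$'' does not by itself force $\phi$ to be the identity on an infinite poset (witness $n\mapsto n{+}1$ on $\integers$), so one has to exploit the additional rigidity of the lattice $\widehat M$ with the convex-hull order — concretely, the strictly order-monotone quadratic function $N$ with finite sublevel sets, which makes a counting argument possible. Everything else — well-definedness, bijectivity, the subrepresentation property, and the reduction to a single fibre — is formal, given Lemma~\ref{lem-decomp-into-irreps} and the classification theorems for $\widehat G_r$ and $\widehat G_0$.
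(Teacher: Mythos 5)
Your proposal is correct and follows essentially the same route as the paper: existence via the explicit map $\iota(\sigma,\nu)\mapsto\pi(\sigma,\nu)$ and the multiplicity-one occurrence of the highest weight in Lemma~\ref{lem-decomp-into-irreps}, and uniqueness by reducing to a bijection of $\widehat M/W_\nu$ for each fixed $\nu$ that is increasing for the convex-hull order and then exploiting finiteness. Your norm-sublevel-set counting argument is just a spelled-out version of the paper's observation that $\mu_\nu^{-1}$ permutes the finite order ideals $S_\sigma$, so no further comparison is needed.
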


\begin{proof}
The existence part of the theorem is handled by the Mackey bijection from \cite{Higson08}, which is the map 
\[
\mu\colon \iota(\sigma, \nu)  \longmapsto \pi(\sigma, \nu) .
\]
Indeed by Lemma~\ref{lem-decomp-into-irreps}, the representation $\pi(\sigma, \nu)$ occurs within  $\iota( \sigma , \nu)$ with multiplicity one.

As for uniqueness, suppose we are given any bijection $\mu$, as in the statement of the theorem.  It follows from Lemma~\ref{lem-decomp-into-irreps} that $\mu$ must have the form 
\[
\mu  \colon \iota(\sigma, \nu )
\longmapsto   \pi(\theta, \nu)
\]
for some $\theta \in \widehat M / W _\nu$ with $\theta\ge \sigma$.  So for each fixed $\nu\in \mathfrak{a}^*$, we obtain from $\mu$ a bijection of sets 
\[
\mu _\nu \colon \widehat M \big / W_\nu 
\longrightarrow  \widehat M \big / W_\nu 
\]
defined by 
\[
\mu\colon \iota(\sigma, \nu) \longmapsto \pi(\mu_{\nu}(\sigma), \nu).
\]
We need to show that $\mu_\nu$ is the identity map for all $\nu$.

Now it follows from Lemma~\ref{lem-decomp-into-irreps} that $\mu^{-1}_\nu$  has the property that 
\[
\mu_\nu \bigl  ( [\sigma]\bigr ) \ge [\sigma] \qquad \forall\, [\sigma] \in \widehat M / W_\nu ,
\]
and so of course the inverse bijection 
has the property that 
\[
\mu_\nu^{-1}  \bigl  ( [\sigma]\bigr ) \le [\sigma]
\qquad \forall\, [\sigma] \in \widehat M / W_\nu .
\]
for all $\sigma  \in \widehat M / W_\nu $.   It follows from this that  $\mu^{-1}_\nu$ maps each of the finite sets 
\[
S_\sigma = \{\,  [\theta] \in \widehat M / W_\nu \, : \, [\theta] \le [\sigma] \, \}
\] into itself, and this in turn implies that 
$\mu^{-1}_\nu$ is the identity map, as required.
\end{proof}

\bibliography{References}
\bibliographystyle{alpha}

\end{document}